\theoremstyle{plain}
\newtheorem{theorem}{Theorem}[section]
\newtheorem{lemma}[theorem]{Lemma}
\newtheorem{proposition}[theorem]{Proposition}
\newtheorem{prop}[theorem]{Proposition}
\newtheorem{corollary}[theorem]{Corollary}
\newtheorem*{assumptionAV}{Assumption $(A,V)$}
\theoremstyle{remark}%definition}
\newtheorem{remark}[theorem]{Remark}
\newtheorem{SP}{Spectral Property}
\newcommand{\norm}[1]{ \left\|  #1\right\| }
\newcommand{\cor}[1]{{\color{red}#1}}
\newcommand{\cob}[1]{{\color{blue}#1}}
 \newcommand{\be}{\begin{equation}}
 \newcommand{\ee}{\end{equation}}
 \newcommand{\bd}{\begin{description}}
 \newcommand{\ed}{\end{description}}
\newcommand{\bea}{\begin{align*}}
\newcommand{\ena}{\end{align*}}
\newcommand{\bec}{\begin{cases}}
\newcommand{\enc}{\begin{cases}}
\definecolor{orange}{rgb}{0.995, 0.75, 0.35}
\definecolor{purple}{rgb}{0.7, 0.2, 0.5}
\definecolor{royalblue}{rgb}{0.2, 0.7, 0.8}
\definecolor{darkgreen}{rgb}{0.2,0.725,0.25}
\def\al{\alpha}
\def\de{\delta}
\def\eps{\epsilon}
\def\ga{\gamma}
\def\lam{\lambda}
\def\om{\omega}
\def\veps{\varepsilon}
\def\vphi{\varphi}
\def\De{\Delta}
\def\Ga{\Gamma}
\def\Om{\Omega}
\DeclareMathOperator{\dive}{div}%\operatornorm{}
\def\sh{\sinh}
\def\ch{\cosh}
\def\th{\tanh}
\def\curl{\mathrm{curl}}
\def\iy{\infty}
\def\inv{^{-1}}
\def\pa{\partial}
\def\tr{\mathrm{tr}}
\def\cL{\mathcal{L}}
\def\cT{\mathcal{T}}
\def\sH{\mathscr{H}}
\def\cO{\mathcal{O}}  
\def\cR{\mathcal{R}} 
\def\sgn{\textrm{sgn}}
\def\To{\Rightarrow}
\newcommand{\mf}{\mathfrak}
\newcommand{\la}{\langle}
\newcommand{\ra}{\rangle}
\newcommand{\nd}{\noindent}
\newcommand{\td}{\tilde}
\newcommand{\wtd}{\widetilde}
\newcommand{\crr}{{\color{red}$\dagger$}}
\newcommand{\rk}{{\bf Remark.}\ \ }
\newcommand{\mk}{\mathfrak}
\newcommand{\Z}{\mathbb{Z}}
\newcommand{\R}{\mathbb{R}}
\newcommand{\C}{\mathbb{C}}
\newcommand{\N}{\mathbb{N}}
\begin{document} 
\title[RNLS with a repulsive potential] %with repulsive potential
{Blowup rate for Rotational NLS with a repulsive potential}
%{lens-type transform for repulsiV}

\author{Yi Hu}
\address{(Yi Hu)
Department of Mathematical Sciences,
Georgia Southern University,
Statesboro,
GA 30460,
US}
\email{yihu@georgiasouthern.edu}

\author{Yongki Lee}
\address{(Yongki Lee)
Department of Mathematical Sciences,
Georgia Southern University,
Statesboro,
GA 30460}
\email{yongkilee@georgiasouthern.edu}

\author{Shijun Zheng}
\address{(Shijun Zheng)
Department of Mathematical Sciences,
Georgia Southern University,
Statesboro,
GA 30460}
\email{szheng@georgiasouthern.edu}

\maketitle

\begin{abstract} In this paper we give an analytical proof
of the ``$\log$-$\log$'' blowup rate for mass-critical nonlinear Schr\"odinger equation (NLS) with a rotation ($\Om\ne 0$) and a repulsive harmonic potential
$V_\ga(x)=\sgn(\ga)\ga^2|x|^2$, $\ga<0$ when the initial data has a mass slightly above that of $Q$,
the ground state solution to the free NLS. 
The proof is based on a virial identity and an $\mathcal{R}_{\gamma}$-transform,
a pseudo-conformal transform in this setting. 
Further, 
we  obtain a limiting behavior description concerning the mass concentration near blowup time. 
%a nonexistence  result 
A remarkable finding is that increasing the value $|\gamma|$ for the repulsive potential 
$V_\ga$ can give rise to global in time solution for the focusing RNLS,  
which is in contrast to the case where $\ga$ is positive. 
This kind of phenomenon was earlier observed  in the non-rotational case $\Om=0$ in Carles' work. %\cite{Car03re}. 
%\edz{thus recovering a similar result in the magnetic potential free case \cite{Car03re}}
In addition, we provide numerical simulations
to partially illustrate the blowup profile along with the blowup rate using dynamic rescaling and adaptive mesh refinement method.
% Our approach is motivated by some recent work in  Merle, Raphael, Fibich \cite{FibMeRa06}
% as well as Svetlana, Yang and Zhao \cite{YangRouZh18, YangRouZh19b} for standard NLS
% Similar results for the  mass-critical   RNLS have been considered recently in   Dinh20-22 BHHZ 
\end{abstract}

\tableofcontents

\section{Introduction}

The nonlinear Schr\"odinger equation (NLS) is a standard model representative of the largely extensive class of dispersive equations. 
They are called \textit{dispersive} owing to the property that given any initial data of different frequencies there corresponds to 
solutions or waves propagating in all different velocities whose waveguide  relies on the media.
%The nonlinear term denotes the potential energy from self-interactions between %(bosonic or femion) 
%particles %or quasi-particles(atoms, electrons, magnon, protons, ions, or excitons) 
%on atomic level/ sub-atomic 
The following NLS of rotation type describe the mean field dynamics of 
Bose-Einstein condensation (BEC) with integer spin 
\begin{align}
i \hbar \pa_t u= -\frac{\hbar^2}{2m} \De u+ V(x)u+ \mu |u|^{p-1} u+ i \Om (x_1\pa_2-x_2\pa_1)u \, , \label{e:nls-VOm}
\end{align}
where $\hbar$ is the reduced Planck constant, $m$ is the mass of the bosonic particle, 
 $u$ is the wave function, $V$ is an external electric potential, and 
$L_\Om := i \Om (x_1\pa_2-x_2\pa_1)$, $\Om\in\R$ 
denotes the angular momentum operator in $\R^n$, $n=2,3$. 
In \cite{BHZ19a}, equation (\ref{e:nls-VOm}) is generalized to the setting of $\R^n$ for all dimensions
in the presence of a trapping potential $V(x):=\gamma^2|x|^2$
and $L_\Om$ is substituted with $L_A:= i A \cdot \nabla $ for 
certain divergence free magnetic field $A$. 
It is proven in \cite{BHZ19a} that in the mass-critical case $p=1+\frac{4}{n}$,
if the initial data is slightly above the ground state $Q$,
then a $\log$-$\log$ blowup rate as \eqref{phi(t):loglog} holds on an open submanifold in $H^1$
as $t\to T_{\max}$\,.  % spectral property for H_{\Om,V}

In atomic condensation,  the coefficient $\mu$ in (\ref{e:nls-VOm}) 
measures the strength of interaction, which depends on a quantity called the $s$-scattering length
that is often tunable 
using magnetic Feshbach resonances.  
It has positive sign  (defocusing) for $\tensor*[^{87}]{Rb}{}$, 
$\tensor*[^{23}]{Na}{}$,  $\tensor*[^1]{H}{}$ atoms, but 
%some elements used in recent experiments 
negative sign (focusing) for $\tensor*[^7]{Li}{}$,
%\sideset{7}{Li}, %\href{http://atomlaser.anu.edu.au/research/}{Rubidium 85} atoms
$\tensor*[^{85}]{Rb}{}$, ${}^{39} K$, $\tensor*[^{133}]{Cs}{}$ %possess a negative $s$-scattering length in  ground state 
\cite{%WTs98,Car02c, %Mo2001K^41,
Sa14potassium,LeoZheng22n}. 
%which can be tuned using magnetic Feshbach resonances
%stable alkali species, 87Rb, 23Na, {}^7Li, 85Rb  {}^{41}K, have been condensed along with hydrogen and metastable {}^4 He

In general, one has in (\ref{e:nls-VOm}) the quadratic function 
$V(x)=V_{\td{\ga}}(x):=\sum_{j=1}^n \sgn(\ga_j)\ga_j^2 x_j^2$,
which represents an external  %trapping 
potential imposed by a system of laser beams, %  trapped particles are bosons  femions ( anyons, ions)
 where $\td{\ga}:=(\gamma_j)\in \R^n$ are  the frequencies of the oscillator. %in three directions.
When $\ga_j>0$, the corresponding term works as  %$\td{V}=V$ (see \eqref{e:tdV-AV}) 
 an anisotropic confinement along the $x_j$ axial direction. %contractive, or attractive  in the $x_j$- direction. 
  %traveling along a waveguide with varying width or excitations when a BEC is released from a trap.
When $\ga_j<0$, %for some j 
it plays a repulsive role that can expand the condensate \cite{Car03re}.   
 Note that such potentials are real version  of  the complex potentials $V(x)=\sum z_j^2 x_j^2$, $z=(z_j)\in \C^n$  that generally involve a damping effect. 
 % darwich ... inhomo damping 
Equation (\ref{e:nls-VOm}) is a special form of the magnetic NLS (\ref{mNLS_AV}) that arises in a physics context such as plasma, BEC spinor, superfluids, superconductor, 
quantum vorticity \cite{Aftalion2009,AHS78a,BaoWaMar05,HelMo2001,LeNaRou18}. 

In this paper,
as a continuation of \cite{BHZ19a} with an underlying theme to study the $\log$-$\log$ law for general potentials, 
we consider the following rotational nonlinear Schr\"odinger equation (RNLS) on $\mathbb{R}^{1+n}$ in the presence of a repulsive harmonic potential
 $V(x)=V_{\ga}(x):=-\gamma^2|x|^2$ with $\ga<0$, $\mu=-1$ and $p=1+\frac4n$
	\begin{align}\label{eq:nls_va}
	\begin{cases}
	iu_t=-\Delta u+V_\ga u-|u|^{p-1}u+i A \cdot\nabla u \\
	u(0,x)=u_0 \in\mathscr{H}^1,
	\end{cases}
	\end{align}
where $u=u(t,x):\mathbb{R}\times\mathbb{R}^n \rightarrow \mathbb{C}$,
 $A=Mx$ with $M$ being an $n\times n$ real-valued skew-symmetric matrix,
i.e. $M^T=-M$, see examples of $M$ in Section \ref{s:prelimin}.
The angular momentum operator $L_A u= iA\cdot \nabla u$ generates the rotation as $e^{tA\cdot \nabla} u(x)=u\left( e^{tM} x \right)$, 
where $e^{tM} \in SO(n)$. 
The space $\mathscr{H}^1=:\Sigma$ denotes the weighted Sobolev space $\mathscr{H}^{1,2}$, 
for which notation
	\begin{align*}
	\mathscr{H}^{1,r}(\mathbb{R}^n)
	:= \{ f\in L^r: \nabla f,\, xf \in L^r \} \qquad
	\text{for \ }
	1<r<\infty
	\end{align*}
with the norm
	\begin{align*}
	\norm{f}_{\mathscr{H}^{1,r}}
	:=\norm{f}_{L^r}
	+\norm{\nabla f}_{L^r}
	+\norm{xf}_{L^r}.
	\end{align*}

Recall that the standard NLS reads
	\begin{align}\label{eq:nls}
	\begin{cases}
	i\varphi_t=-\Delta \varphi-|\varphi|^{p-1} \varphi, \\
	\varphi(0,x)=\vphi_0\in H^1. % u_0\in \Sigma
	\end{cases}
	\end{align}
Here $H^s$ denotes the usual Sobolev space $H^{s,2}:=\{f\in L^2: |\nabla|^s f\in L^2\}$.
The well-posedness of equation \eqref{eq:nls} for $1<p\leq 1+\frac{4}{n-2}$ in the euclidean space is well studied.
Let  $p=1+\frac{4}{n}$ and 
let $Q=Q_0\in H^1(\mathbb{R}^n)$ be the unique positive decreasing radial function that satisfies
	\begin{align}\label{eq:ground_state_solution}
	-Q=-\Delta Q-Q^{1+\frac{4}{n}}.
	\end{align}   
% p = 1+ 4/n  Bourgain and Wang \cite{BW98} constructed solutions of positive energy having blowup rate 
%(T -t)\inv in dimensions $n = 1, 2$, which was later shown unstable  
When the initial data is above the ground state, namely,
$\| \vphi_0 \|_2$ is slightly larger than $\| Q \|_2$,
numerical studies in \cite{LanPaSuSu88} indicate the existence of singular solutions in 2D having a blowup speed
	\begin{align}\label{phi(t):loglog}
	\| \nabla \varphi(t) \|_{2}
	\approx\sqrt{ \frac{\log |\log({\cT} - t)|}{{\cT}-t} }
	\end{align}
at finite blowup time ${\cT} = {\cT}_{\max}$\,.
A rigorous analytical proof of \eqref{phi(t):loglog} in one dimension was initially given in Perelman \cite{Pere01}.
% of the existence and stability for the $\log$-$\log$ speed for generic blowup solutions in one dimension. 
Under certain spectral conditions in Spectral Property  \ref{h:spectral_property} 
(Section \ref{s:spec-loglog}), 
Merle and Rapha\"el \cite{MerRa05a,MerRa05b}
proved the sharp blowup rate of the solutions to equation \eqref{eq:nls} for $n \geq 1$,
i.e.,
there exists a universal constant $\alpha^*$ such that if $\vphi_0\in B_{\alpha^*}$ with negative energy,
then $\vphi(t)=\varphi(t,\cdot)$ is a blowup solution to equation \eqref{eq:nls} with maximal interval of existence $[0, \mathcal{T})$
satisfying the $\log$-$\log$ blowup rate \eqref{phi(t):loglog},
where
	\begin{align}\label{eB_alpha}
	B_{\alpha^*}
	:= \left\{ \phi \in H^1: \int |Q|^2 dx < \int |\phi|^2 dx < \int |Q|^2 dx + \alpha^* \right\},
	\end{align}
see Theorem \ref{thm:log-log-phi}.
Such a blowup rate is also known to be stable in 
the submanifold $S_{\al^*}:=B_{\al^*}\cap \{u\in H^1: \mathcal{E}(u)<0\}$. %of H^1 
The $\log$-$\log$ blowup rate has also been shown for more general nonlinear evolutionary PDEs,
see e.g. \cite{BaFiGa2010s,
Darw12a,
FanMen20c, Lan2021b} %PlanchonRa07e,YangRouZhao20s} 
for NLS type equations. 
In addition, see \cite{DeGangSiWa08,Pere2022,Wein86ss} and \cite{Lan2021b,Zhao2013} 
 %\cite{BozGhMasmYang24}
for the surveys and studies of singularity formation and profiles for other dispersive equations. 
% coherent structure

% \cite{BHZ19a} proved such a log-log law for RNLS when $V=V_\ga$ is a trapping 

The study of the case $V=V_{\ga}(x)$, $\ga<0$ is motivated by 
the treatment of the attractive case $\ga>0$ in \cite{BHZ19a},
where is introduced a pseudo-conformal type transform for the trapping harmonic potential and rotation.
We will show that such ``$\log$-$\log$ law'' continues to hold for equation \eqref{eq:nls_va} in the repulsive case $\ga<0$.
Our proof is based on a virial identity for equation \eqref{eq:nls_va},
the $\mathcal{R}_{\ga}$-transform \eqref{eq:p_transform} that maps solutions of  \eqref{eq:nls} 
to solutions of  \eqref{eq:nls_va},  and an application of the above result of Merle and Rapha\"el's.

Our main result is stated as follows.
Let $\alpha^*$ be the above-mentioned universal constant.

\begin{theorem}\label{thm:log-log-u}
Let $p = 1 + \frac{4}{n}$ and $V_\ga(x)=-\ga^2|x|^2$, $\ga<0$ in equation \eqref{eq:nls_va},
where $1\leq n \leq 10$.
Suppose $u_0\in \mathcal{S}_{\al*}$, namely, 
	\begin{align*}
	u_0 \in \mathcal{B}_{\alpha^*}
	:= \left\{ \phi \in \mathscr{H}^1: \int |Q|^2 dx < \int |\phi|^2 dx < \int |Q|^2 dx + \alpha^* \right\}
	\end{align*}
and
	\begin{align}\label{eq:negative-initial-energy}
	\mathcal{E}(u_0)
	:=\int |\nabla u_0|^2 dx
	-\frac{n}{n+2} \int |u_0|^{2+\frac{4}{n}} dx
	< 0.
	\end{align}
Then there exists ${T^*} =T_{max}< \infty$ such that
$u\in C([0,{T^*}); \mathscr{H}^1)$ is a blowup solution to equation \eqref{eq:nls_va}
  with the $\log$-$\log$ blowup rate
	\begin{align}
	\lim_{t \rightarrow {T^*}}
	\frac{\Vert \nabla u(t) \Vert_2}{\Vert \nabla Q \Vert_{2}}
	\sqrt{ \frac{{T^*} - t}{\log \left| \log ( {T^*} - t)  \right|} }
	= \frac{1}{\sqrt{2 \pi}}\,, \label{eV:loglog}
	\end{align}
where $Q=Q_0$ is the unique solution to equation \eqref{eq:ground_state_solution}.
In addition,
the above conclusion holds true for radial data when $n=11,12$.
\end{theorem}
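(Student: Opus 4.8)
The plan is to reduce the repulsive rotational problem \eqref{eq:nls_va} to the potential-free, non-rotating equation \eqref{eq:nls} through the pseudo-conformal $\mathcal{R}_\ga$-transform \eqref{eq:p_transform}, and then to import the sharp log-log law of Merle and Rapha\"el (Theorem \ref{thm:log-log-phi}). The governing idea is that, the potential being repulsive, the trigonometric factors of the usual lens transform for a confining well get replaced by hyperbolic ones that stay regular for all $t$; concretely I would take the transform to carry a solution $\varphi$ of \eqref{eq:nls} to $u=\mathcal{R}_\ga\varphi$ via a reparametrized time $\tau(t)=\frac{\th(2|\ga| t)}{2|\ga|}$, a dilation $\lambda(t)=\sech(2|\ga| t)$, the rotation $e^{tM}\in SO(n)$, and a Gaussian phase $\Phi$ proportional to $|\ga|\,|x|^2\th(2|\ga| t)$, i.e. $u(t,x)=\lambda(t)^{n/2}\varphi\big(\tau(t),\lambda(t)e^{tM}x\big)e^{i\Phi(t,x)}$.

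The first task is the intertwining identity: a direct computation (the chain rule together with $M^T=-M$, so that $e^{tM}$ is orthogonal and $iA\cdot\nabla$ acts infinitesimally as the rotation) shows that the Gaussian phase reproduces the repulsive term $-\ga^2|x|^2u$ together with the dispersive correction, the factor $\lambda(t)^{n/2}$ respects the mass-critical scaling of $|u|^{p-1}u$, and the rotation removes the angular-momentum term $iA\cdot\nabla u$; hence $u$ solves \eqref{eq:nls_va} exactly when $\varphi$ solves \eqref{eq:nls}. Because $\tau(0)=0$, $\lambda(0)=1$, $e^{0\cdot M}=I$, and $\Phi(0,\cdot)=0$, the transform is the identity at $t=0$, so $u_0=\varphi_0$; and since $\int\lambda^n|\varphi(\tau,\lambda e^{tM}x)|^2\,dx=\int|\varphi(\tau,y)|^2\,dy$ it is an $L^2$ isometry. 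Consequently the mass constraint $u_0\in\mathcal{B}_{\al^*}$ is equivalent to $\varphi_0\in B_{\al^*}$, and, as $u_0=\varphi_0$, the negative-energy hypothesis \eqref{eq:negative-initial-energy} states precisely that $\varphi_0$ has negative energy for \eqref{eq:nls} (the functional $\mathcal{E}$ being twice the conserved energy of \eqref{eq:nls}).

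With $\varphi_0=u_0\in B_{\al^*}$ of negative energy, Theorem \ref{thm:log-log-phi} yields a finite blowup time $\cT$ for $\varphi$ obeying $\|\nabla\varphi(\tau)\|_2\sim\frac{\|\nabla Q\|_2}{\sqrt{2\pi}}\sqrt{\frac{\log|\log(\cT-\tau)|}{\cT-\tau}}$ as $\tau\to\cT$. Provided the compatibility $2|\ga|\cT<1$ holds, the relation $\tau(T^*)=\cT$ has the finite root $T^*=\frac{1}{2|\ga|}\,\mathrm{arctanh}(2|\ga|\cT)$; as $t\to T^*$ both $\lambda(t)\to\sech(2|\ga| T^*)>0$ and $\tau'(t)\to\sech^2(2|\ga| T^*)$ remain positive and finite, so $\cT-\tau(t)\sim\sech^2(2|\ga| T^*)\,(T^*-t)$. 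Next I would establish $\|\nabla u(t)\|_2\sim\lambda(T^*)\|\nabla\varphi(\tau)\|_2$: differentiating $u$, the leading contribution is $\lambda(t)\,(\nabla\varphi)$, giving $\|\nabla u\|_2=\lambda(t)\|\nabla\varphi(\tau)\|_2\big(1+o(1)\big)$, while the Gaussian phase contributes only a term bounded by $\|x\varphi(\tau)\|_2$, which stays $O(1)$ during a log-log blowup. Substituting the time asymptotics and noting that the constant $\sech^2(2|\ga| T^*)$ is absorbed harmlessly inside $\log|\log(\cdot)|$, the dilation factor $\lambda(T^*)=\sech(2|\ga| T^*)$ is cancelled exactly by the factor $\sqrt{\sech^2(2|\ga| T^*)}$ arising from $\cT-\tau\sim\sech^2(2|\ga| T^*)(T^*-t)$, which leaves the universal constant $1/\sqrt{2\pi}$ untouched and produces \eqref{eV:loglog}.

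The principal difficulty is this final matching: one must show that the error in $\|\nabla u(t)\|_2=\lambda(t)\|\nabla\varphi(\tau)\|_2(1+o(1))$ is genuinely lower order, which amounts to dominating the cross and phase terms by $\|x\varphi(\tau)\|_2$ and $\|\varphi(\tau)\|_2$ against the diverging $\|\nabla\varphi(\tau)\|_2$. This is the one step where the internal structure of the Merle-Rapha\"el solution is needed beyond the bare statement of Theorem \ref{thm:log-log-phi}, namely that the mass concentrates at a bounded point so that the variance $\|x\varphi(\tau)\|_2$ stays $O(1)$ while $\|\nabla\varphi(\tau)\|_2\to\infty$. A secondary subtlety is the condition $2|\ga|\cT<1$ ensuring $T^*<\infty$: its failure for large $|\ga|$ is exactly the route to the global solutions advertised in the introduction, where the repulsive dispersion outruns the focusing collapse. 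Finally, the dimensional range $1\le n\le10$ (and $n=11,12$ for radial data) is inherited verbatim from the Spectral Property \ref{h:spectral_property} underlying Theorem \ref{thm:log-log-phi}, and needs no separate argument here.
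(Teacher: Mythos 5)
Your proposal follows the same route as the paper's proof: reduce \eqref{eq:nls_va} to \eqref{eq:nls} via the $\mathcal{R}_\ga$-transform of Theorem \ref{thm:p_transform}, invoke the Merle--Rapha\"el law (Theorem \ref{thm:log-log-phi}) for $\vphi$ with $\vphi_0=u_0$, and track how the hyperbolic time change $\tau=\frac{\tanh(2\ga t)}{2\ga}$ and the factor $\cosh(2\ga t)$ distort the rate; your constant bookkeeping (the $\cosh(2\ga T^*)$ from the dilation cancelling the one from $\cT-\tau\sim (T^*-t)/\cosh^2(2\ga T^*)$) matches the paper's computation exactly. The one place you diverge is the control of the phase term in $\nabla u$: you propose to bound $\norm{x\vphi(\tau)}_2$ by appealing to the internal structure of the Merle--Rapha\"el solution (mass concentrating at a bounded point), and you flag this as the step requiring more than the bare statement of Theorem \ref{thm:log-log-phi}. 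The paper avoids this entirely with the elementary virial identity for the free NLS (Lemma \ref{lem:virial-identity}): $J(\tau)=\norm{x\vphi(\tau)}_2^2$ satisfies $J''\equiv 8\mathcal{E}(u_0)$, so $J$ is an explicit quadratic polynomial in $\tau$ and hence bounded on the finite interval $[0,\cT)$; no profile information is needed, and your heavier route, while workable, is unnecessary. Finally, your caveat that $2|\ga|\cT<1$ is required for $T^*$ to be finite is a genuine point rather than a defect of your argument: by Theorem \ref{thm:p_transform}, for $|\ga|\ge\frac{1}{2\cT}$ the transformed solution is global (consistent with Proposition \ref{t:u0>Q-blowup}(b) and Corollary \ref{c:blup-infty}), so the finiteness of $T^*$ asserted in the theorem tacitly presumes this smallness of $|\ga|$; the paper's proof begins by assuming $u$ already blows up at a finite $T^*$ and does not address this either, so your flagging of the issue is, if anything, more careful than the printed argument.
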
  

We comment that the set $\mathcal{S}_{\al^*}:=\mathcal{B}_{\al^*}\cap \{u\in \Sigma: \mathcal{E}(u)<0\}$ 
in $\Sigma=\sH^1$  
is a stable manifold of such blowup solutions satisfying the $\log$-$\log$ blowup rate. 
In fact, Lemma \ref{l:u-vphi:Sigma-norm} suggests that  
$\norm{u}_\Sigma\approx \norm{\vphi}_\Sigma$ by means of the $\cR_\ga$-transform.   
 Therefore, the stability follows from  Theorem \ref{thm:log-log-u} and Theorem \ref{thm:log-log-phi}. 

The dimension restriction in Theorem \ref{thm:log-log-u} is due to 
the validity of the Spectral Property \ref{h:spectral_property} 
in Section \ref{s:spec-loglog},
which has been verified for $n=1$ to $n=10$ and radially for $n=11, 12$.
See more detailed discussion in Section \ref{s:spec-loglog}.
It would be of interest to further look into  the case of a general (anisotropic) quadratic potential %i.e., the anisotropic 
 $V_{\td{\ga}}(x)$, 
which has been a noteworthy problem in the literature of both mathematics and physics \cite{Aftalion2009,ALZh24n,
BaoWaMar05,Car2011t, %Dinh2022e,
Fetter2009r,LeoZheng22n,LieSei06}. %SowBi04} 

Concerning the limiting behavior for a singular solution when $u_0\in \mathcal{S}_{\alpha^*}$,
we show in Proposition \ref{t:nonexistence} that there exist no blowup solution as in 
Theorem \ref{thm:log-log-u} that admits $L^2$-convergent sequence for any $(t_n)\to T=T^*$. 
Moreover, in 
Theorem \ref{thm:limiting_profile},
we provide an $L^2$- concentration property 
for RNLS (\ref{eq:nls_va}), where the result holds for all $\gamma\in \R$.  
It is worth mentioning that 
the concentration region given by (\ref{eq:w(t)})
shows that for initial data $u_0$ in $\mathcal{S_{\alpha^*}}$, the blowup profiles have
more focused concentration than the 
usual parabolic region $\{|x-x(t)|^2\lesssim |T-t|\}$ as $t\to T$. 
A further investigation in this direction is to obtain a description of the blowup profile.
% Merle  Rapha\"el  have systematically  studied \cite{merle2005blow  merle2004universality, merle2006sharp} 
   %M and R  proved  for  H^1(\R^n)$, $1\le n\le 4$  
Recall from \cite{MerRa05a,MerRa05b} that for equation \eqref{eq:nls}, if 
$\varphi_0 \in \tilde{S}_\al:=
\{\phi\in H^1\cap B_{\alpha^*}: \mathcal{E}(\phi)\le 0\}$, 
% which have non-positive energy and with mass slightly above ground state,
then the corresponding solution  $\vphi(t)$ blows up in finite time ${\cT}={\cT}_{\max}$,
with the following precise asymptotics as $t$ approaches  ${\cT}$:
  % [Log-log blowup dynamics]%\label{loglog:profile}
\begin{equation}\label{phi:limit-behavior}
\varphi(t,x)= \frac{1}{\lambda(t)}(Q+\epsilon)\left(\frac{x-x(t)}{\lambda(t)}\right)e^{-i\gamma(t)},  
\end{equation}
 where  $\lambda(t)\inv=\norm{\nabla_x \vphi}_2\approx \left(\frac{\ln |\ln ({\cT}-t)|}{{\cT}-t}\right)^{1/2} $ and 
\[
\epsilon(t)\xrightarrow{t\rightarrow {\cT}} 0 \textup{ in }\dot{H}^{1}\cap L_{loc}^{2}\,.
\] 
Such blowup property is known to be stable in $H^{1}$ \cite{Ra05s}.
By means of a pseudo-conformal transform (\ref{eq:p_transform}),
we can obtain this type of limiting behavior for blowup profile of equation \eqref{eq:nls_va}
\begin{equation}\label{behavior:loglog} 
u(t,x)= \frac{1}{{\lambda}(t)}(Q+\epsilon)\left(\frac{x-\td{x}(t)}{{\lambda}(t)}\right)e^{-i\td{\gamma}(t)}\,  
\end{equation}
as $t$ goes to $T^*=T_{max}$.
% where  $\lambda(t)\inv\approx \left(\frac{\ln |\ln (\mathcal{T}-t)|}{\mathcal{T}-t}\right)^{1/2} $ and 
%\[
%\epsilon(t)\xrightarrow{t\rightarrow \mathcal{T}} 0 \textup{ in }\dot{H}^{1}\cap L_{loc}^{2}\,.
%\] 

% was later proved to be stable under $H^{s}$ perturbation   for all $s>0$ by Colliander and Rapha\"el \cite{collian09rough}, although one need  reformulate the notion of log-log blup (in a natural way) 

In Section \ref{s:numerics} we provide numerical results that partially support the analytical results above-mentioned.  
 The profile characterization for blowup solutions have  been studied recently with numerical assist 
 for a variety of dispersive equations, see e.g. \cite{BaFiGa2010s,BaFibMan10,BozGhMasmYang24,FibMeRa06,SimZw2011} and the references therein. 

\section{Preliminaries}\label{s:prelimin} 
%[from the article \cite{ReZaStri01} and [Zheng23] 

The RNLS (\ref{e:nls-VOm}) is the modeling equation that describes the wave interaction of a  
  dilute rotating Bose gas with external forces at ultracold temperature.  
  %In general 
  For a quadratic potential $V(x)=V_{\td{\ga}}(x)=\sum_j \sgn(\ga_j)\ga^2_jx_j^2$,
  the term ``repulsive'' (component) refers to either $\ga_j$ is negative for some $j$, or 
   $|\Om|>\sqrt{2}\min \{\ga_1,\ga_2\}$ ({\em fast rotating}) 
when assuming $m=\hbar=1$, %\uga:=\min ga_1,ga_2 V=\ga_1^2 x_1^2+\ga_2^2x_2^2+\ga_3^2x_3^2  time-indep ext oscillator pot
cf. \cite{ReZaStri01,ALZh24n,LeoZheng22n}.  %providing  external confine ga_x> ga_y 
%Common ex of atoms used in BEC experiments include:
%Rubidium: Rubidium-87 was the first atom successfully used to create a BEC in 1995.
%Sodium: Shortly after, a BEC of sodium atoms was created.
%Lithium: BECs have also been created using lithium atoms.
%Other elements: BECs have also been achieved with other elements like hydrogen, strontium, calcium, cesium, chromium, and potassium, among others 

 %here we simulate the ground state and its dynamics by observing the quantum \textcolor{blue}{vortex lattice}   in the (fast) rotation case, see recent work in physics where anisotropic harmonic potential produces Landau level bound state  

%  x2 models a magnetic field whose role is to confine the particles (this is one of the ingredients for BEC, once the atoms have been cooled by a laser, see e.g. [1]), 
%and the nonlinear  takes the (main) interactions between the particles into account
%\iffalse{For the elliptic problem the state Q minimizes the Gross-Pitaevskii

  For large systems, the semiclassical flow motion is given by the hydrodynamical equations of superfluids  
 when the anisotropic potential $V_{\td{\ga}}$ rotates with angular velocity $\widetilde{\Om}=(\Om_1,\Om_2,\Om_3)$.
  %these equation takes the form in the corotating frame:
%\begin{align} & \frac{\pa \rho}{\pa t}+\nabla (\rho ({\bf u}-\Om \times {\bf r}))=0\label{e:rho-u-Om}\\
%& \frac{\pa u}{\pa t}+\nabla \cdot ( \frac{\nu^2}{2}+ \frac{V}{M} +\frac{\mu}{M}- {\bf u}\cdot (\Om \times {\bf r}))=0\label{e:ut-VOm}  \end{align}
In  equation (\ref{e:nls-VOm}), $\mu$ %=\mu_{loc}(\rho)=g\rho$ 
is the chemical potential of the uniform gas 
that is proportional to $4\pi \hbar^2 a N$,
% $g=4\pi \hbar^2 a/M$ is the coupling constant fixed 
where $N$ is the number of particles and
 $a $  the $s$-wave scattering length. %{\bf u}  the velocity field %in the laboratory frame,
In this model, the rotational NLS  %a special form of the Gross-Pitaevskii equation
can assume quantum vortex solutions in the sense of metastable configurations.  
%(since the critical frequency needed to generate a stable vortex becomes smaller and smaller as the number of atoms increases, the solutions in \cite{ReZaStri01} corresponds to metastable configurations in gen

\iffalse \footnote{
In geometric notion, $A$ can be regarded as 1-form, and $B=dA$ the induced 2-form: 
$A=\sum_{j=1}^n A_jdx_j$,
$B=dA=\sum_{j<k}B_{jk} dx_j\wedge dx_k$, $B_{jk}=\pa_j A_k-\pa_k A_j$.
} 
\fi

 %\footnote{Easy (and interesting) to verify that $\frac{d}{dt}H(t)=0$ for all $t$ by noting the anti-symmetry $B^T=-B$, thus
% H(t)=H(0)\quad \forall t 
%How does an electro-magnetic field affect  the NLS
% Newton's law and Lorentz force  from q to classical particles  Soliton dynamics   p<1+4/n

% Since under the assumption above $\cL$ is bounded from below, throughout this paper we may assume $V(x)\approx \la x\ra^2$ without loss of generality. Then all the results 
%remain true as long as we use $(\cL + \lam)^{-s/2}$ for sufficiently large number $\lam$instead of $\cL^{-s/2}$ to define the space $\sH^s$. let $G(u)=2\mu u^{(p+1)/2}/(p+1)  $ that is $F=\mu|u|^{p-1}u$.

\subsection{Rotational NLS on $\R^n$} 
The RNLS (\ref{eq:nls_va}) is on $\R^n$ a special form of the magnetic NLS (mNLS) 
\begin{align}\label{mNLS_AV}
	iu_t= H_{A,V} u+ N(u),  \qquad
	u(0,x)=u_0 \in\mathscr{H}^1, 	%\end{cases}
	\end{align}
% generated  by the Hamiltonian  H_{A,V}   inhomog  \mu |u|^{p-1}u 
where $H_{A,V}=-(\nabla-\frac{i}{2} A)^2+ V_e$,
$A$ is sublinear and $V_e$ is subquadratic as given in  \cite{Ya91}; %Assumption $(A,V)$; %\ref{hyp:AVB}; 
$N(u)$ represents a suitable  power or Hartree nonlinearity,    
see e.g. \cite{De91,Zheng2012a,HLZh21u} and \cite{BHZ19a}.

\iffalse
\footnote{ 
\begin{assumptionAV}\label{hyp:AVB}
% H_{\td{A},\td{V}}:=-(\nabla - i\td{A})^2 +\td{V}$.  % _effect 
 $A=(A_j)_1^n$ is sublinear and $V$ subquadratic, that is,   
%\begin{assumption}\label{a:A-lin-V-quadr}
  $A_j\in C^\iy(\R^n)$ and  $V\in C^\iy(\R^n)$ are real-valued satisfying    
\begin{equation}\label{eAV:sublin-qua}
\begin{aligned} 
&\pa^\al A_j(x)=O(1),  \quad\forall \,|\al| \ge 1, %\label{e:A-sublin} \\
&\pa^\al V(x)=O(1),   \quad\forall \,|\al| \ge 2  %\label{e:V-subquad}
\end{aligned}
\end{equation}
as $|x|\to \iy$.  In addition,  there exists some $\veps>0$ such that for all $|\al | \ge 1$
\begin{align} 
|\pa^\al B(x)|\le c_\al \la x\ra^{-1-\veps} , \label{eB:eps}
\end{align} 
where  $B=(b_{jk})_{n\times n}$ is the magnetic field induced by $A$,
$b_{jk}= \pa_j A_k- \pa_k A_j$, $1\le j,k\le n$,
$|B|=|B(x)|=\sum_{jk} |b_{jk}(x)|$,  cf. \cite{Ya91,De91,Zheng2012a}.   
\end{assumptionAV}
}
\fi

For a RNLS (\ref{eq:nls_va}), 
 the canonical examples are given as follows.  
If $n=2m$,
\begin{align}\label{eM:even}
M=M_{2m}=\begin{pmatrix}
\Om_1\sigma& \quad&\\
 &\Om_2\sigma&\quad\\
&\quad&\ddots\\
&\quad&\quad& \Om_m\sigma
\end{pmatrix}
\end{align}
where 
\begin{align}\label{sigma90}
\sigma=\begin{pmatrix}
0&-1 \\
1& 0
\end{pmatrix}\,.
\end{align}
If $n=2m+1$,
\begin{align}\label{eM:odd}
M=M_{2m+1}=\begin{pmatrix}
\Om_1\sigma& \quad&\\
 &\Om_2\sigma&\quad\\
&\quad&\ddots\\
&\quad&\quad& \Om_m\sigma\\
&\quad&\quad& \quad&0
\end{pmatrix}\,.
\end{align}
For $\Om=(\Om_1,\dots,\Om_m)$ in $\R^m$,
we shall denote $M=M_\Om$ in the sequel. 

If $A=Mx$, $M$ skew-symmetric, then $A\cdot x=0$, % Poincare gauge 
which implies that $A$ is perpendicular to $x$ in $\R^n$.   
Then if $f$ is radial symmetric, then 
one always has $L_Af=0$, since $L_A=A\cdot \nabla $ is a direction derivative
along a tangential vector to the  sphere of radius $r=|x|$.

%\end{document}

\subsection{Solution to the linear flow of RNLS (\ref{eq:nls_va})}
Assuming $u$ is small as $x$ large. 
Then 
\eqref{eq:nls_va} reduces to a linear equation with $\mu=0$
\begin{align}
&iu_t=-\De u -\ga^2 |x|^2 u +\mu|u|^{4/n} u+ L_A u\,, \label{U(t):VA-mu} 
% u(0,x)=u_0(x)\in \mathscr{H}^1
\end{align}
or, 
\begin{align}
&iu_t=Hu=-\De u -\ga^2 |x|^2 u + L_A u \label{U(t):VA} 
% u(0,x)=u_0(x)\in \mathscr{H}^1
\end{align}
where given $\Om=(\Om_1,\dots,\Om_m)$ and $\gamma<0$, %(\ga_1,\dots,\ga_m)$,
$H=H_{\Om,\ga}:=-\De+V_\ga+ i A\cdot \nabla$ is essentially selfadjoint. 
In fact, one can write $H$ in the magnetic form 
$H_{\Om,\ga}=H_{A,V_{\text{effect}}}:=-(\nabla-\frac{i}{2} A)^2+ V_{\text{effect}}$,
where 
in view of (\ref{eM:even}) and (\ref{eM:odd}), we have if $\ga<0$ and $A=Mx$, then
\begin{align}
V_{\text{effect}}(x)=&-\ga^2|x|^2-\frac{|A|^2}{4}  \notag\\
=& \la -\ga^2-\frac{M^2}{4}x,x \ra\notag\\
=&
\begin{cases} 
\sum_{j=1}^m -(\ga^2+\frac{\Om_j^2}{4})(x_{2j-1}^2+x_{2j}^2)& \text{if }\ n=2m\\
\sum_{j=1}^m -(\ga^2+\frac{\Om_j^2}{4})(x_{2j-1}^2+x_{2j}^2)-\ga^2x^2_{2m+1}& \text{if} \ n=2m+1.
\end{cases} \label{Ve:gaOm}
\end{align}

According to {Theorem} \ref{thm:p_transform}, 
%Denote $\cR:=\cR_{\ga}$ for simplicity. %\edz{$\ga<0$; sign of the following needs to be changed to  -\ga 
%Let $p=1+\frac4n$ and $\ga<0$.
Let $\varphi\in C([0,\mathcal{T}),H^1)$ be a global solution to equation \eqref{eq:nls} with $\cT=+\iy$.
Then for all $0\ne\ga$ in $\R$
	\begin{align}\label{eR_transf}
	u(t,x)=& %\mathcal{R} \varphi (t,x)
	\frac{1}{\cosh^\frac{n}{2}(2\gamma t)}
	e^{i\frac{\gamma}{2}|x|^2\tanh(2\gamma t)}
	\varphi\left(\frac{\tanh(2\gamma t)}{2\gamma},\frac{e^{tM}x}{\cosh(2\gamma t)}\right) \notag\\
	=&
	 \left( \frac{\gamma}{2 \pi i \sinh(2 \gamma t)} \right)^\frac{n}{2}
	e^{i \frac{\gamma}{2} |x|^2 \coth(2 \gamma t)}\,
		\end{align}
where we note that $\vphi(t,x)= \frac1{ (4 \pi i  t)^{\frac{n}{2}}}
	e^{i \frac{ |x|^2}{4t}}
	$ solves %with $\mu=0$
	\begin{align}
&i\vphi_t=-\De \vphi  \label{vphi(t):0}  %+\mu |\vphi |^{4/n} \vphi 
% u(0,x)=u_0(x)\in \mathscr{H}^1
\end{align}
	and 
\[ \th(2\ga t)+ \frac1{\ch(2\ga t) \sh(2\ga t)}=\coth(2\ga t).
\]	
% t in \R   
%Let $\cT=\cT_{max}$ and $T^*=T_{max}$  and denote $[0,\cT)$ and $[0,T^*)$ the forward maximal intervals for $\vphi$and $u$ respectively.  
%If $\cT\ge  \frac{1}{2|\gamma|}$, then $u=\cR\vphi$ is a global solution.  

% If $\cT<\frac{1}{2|\gamma|}$, then $u:=\mathcal{R} \varphi$ is a solution to equation \eqref{eq:nls_va}in $C\left( \left[ 0,\frac{\tanh^{-1}(2\gamma \mathcal{T})}{2\gamma} \right),\mathscr{H}^1 \right)

Hence
 the fundamental solution for (\ref{U(t):VA}) %with $\mu=0$ 
 is given by:
\iffalse 
\footnote{ If  $\ga>0$, 
\begin{align}\label{eq:fund-U(t)}
    U_\ga(t,x,y)= \left( \frac{\gamma}{2 \pi i \sin(2 \gamma t)} \right)^\frac{n}{2}
	e^{i \frac{\gamma}{2} |x|^2 \cot(2 \gamma t)}
	e^{i \frac{\gamma}{2} \frac{|y|^2}{\tan(2 \gamma t)}}
	e^{-i \gamma \frac{(e^{tM} x) \cdot y}{\sin(2 \gamma t)}}\,
	%\varphi_0(y) dy
	%:= \int K(t, x, y) \varphi_0(y) dy
	\end{align}  
}
\fi

 If $\ga<0$,  %\eqref{eq:fund-U(t)}:
\begin{align}\label{eq:fund-U(t)}
    U_\ga(t,x,y)= \left( \frac{\gamma}{2 \pi i \sinh(2 \gamma t)} \right)^\frac{n}{2}
	e^{i \frac{\gamma}{2} |x|^2 \coth(2 \gamma t)}
	e^{i \frac{\gamma}{2} \frac{|y|^2}{\tanh(2 \gamma t)}}
	e^{-i \gamma \frac{(e^{tM} x) \cdot y}{\sinh(2 \gamma t)}}\,.
	\end{align}  
This suggests that 
$U(t)f=U_\ga(t)f:=\int U_\ga(t,x,y)f(y)dy$ satisfies the $L^1\to L^\iy$ dispersive estimate:

\iffalse
\footnote{If $\ga>0$,
\begin{equation*}
\norm{U(t)f}_\iy\le C |t|^{-n/2}\norm{f}_1\qquad \forall |t|<\frac{\pi}{4|\ga|}.
\end{equation*}
}
\fi

If $\ga<0$,
\begin{align}\label{eU:disp}
\norm{U(t)f}_\iy\le
\begin{cases}
 C |t|^{-n/2}\norm{f}_1\quad & |t|\le \frac{\pi}{4|\ga|}\\
C e^{-2|\ga| t}\norm{f}_1\quad & |t|>\frac{\pi}{4|\ga|}\,. 
\end{cases}
\end{align} 

Since $U(t)=e^{-itH_{\Om,\ga}}$ is unitary,  
\begin{equation}\label{L2:U(t)}
\norm{U(t)f}_2=\norm{f}_2. 
\end{equation}
According to \cite[Theorem 1.2]{KT98}, 
these two estimates (\ref{eU:disp})-(\ref{L2:U(t)}) are sufficient for us to
obtain the long time Strichartz estimates. 

\begin{lemma} \label{l:stri-global} Let $U(t)=e^{-itH_{\Om,\ga}}$ with $\ga<0$
and $\Om\in\R^m$, $m=\lfloor \frac{n}{2}\rfloor$.  Then there exists $C_0>0$ such that  for all $t$ in $\R$
\begin{align*}
\norm{U(t)f}_{L^q(\R,L^r)}\le& C_0\norm{f}_2\\
\norm{\int_0^t U(t-s)F(s,\cdot) ds}_{L^q(\R,L^r)}\le& C_0\norm{F}_{L^{\td{q}'}(\R,L^{\td{r}'}) }\,.
\end{align*}
where $(q,r), (\td{q},\td{r})$ are any sharp admissible pairs satisfying 
$(q,r,n)\ne (2,\iy,2)$ and  
\begin{equation}
\frac{1}{q}=\frac{n}{2}\left(\frac12-\frac1{r}\right)\,. 
\end{equation} 
\end{lemma}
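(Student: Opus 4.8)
The plan is to invoke the abstract Keel--Tao mechanism \cite[Theorem 1.2]{KT98}, whose hypotheses are an energy bound and an untruncated decay estimate of pure power type. The energy bound $\norm{U(t)f}_2=\norm{f}_2$ is already recorded in \eqref{L2:U(t)}, and the group law $U(t)U(s)^*=U(t-s)$ follows from the essential self-adjointness of $H_{\Om,\ga}$ noted above, since then $U(s)^*=U(-s)$ and $U(t)U(-s)=e^{-i(t-s)H_{\Om,\ga}}$. The remaining ingredient is to recast the two-regime dispersive estimate \eqref{eU:disp} as a single power-law decay of exponent $\sigma=n/2$.

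First I would upgrade \eqref{eU:disp} to the uniform bound
\begin{align*}
\norm{U(t)f}_\iy \le C_\ga\, |t|^{-n/2}\norm{f}_1, \qquad t\ne 0.
\end{align*}
On the short-time range $|t|\le \frac{\pi}{4|\ga|}$ this is precisely the first line of \eqref{eU:disp}. On the complementary range the point is that the exponential factor dominates the polynomial one: the map $|t|\mapsto |t|^{n/2}e^{-2|\ga| t}$ is continuous and tends to $0$ as $|t|\to\iy$, hence is bounded on $\{|t|>\frac{\pi}{4|\ga|}\}$ by some constant $C'=C'(n,\ga)$, which converts the second line of \eqref{eU:disp} into the same $|t|^{-n/2}$ form. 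In fact the genuine decay is far faster for large $|t|$, but only the power-law majorant is needed for the abstract theorem.

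With the pair $\norm{U(t)f}_2=\norm{f}_2$ and $\norm{U(t)U(s)^*f}_\iy\le C_\ga|t-s|^{-n/2}\norm{f}_1$ in hand, \cite[Theorem 1.2]{KT98} applies with $\sigma=n/2$. Its admissibility relation $\frac1q+\frac{\sigma}{r}=\frac{\sigma}{2}$ reads $\frac1q=\frac{n}{2}\left(\frac12-\frac1r\right)$, which is exactly the condition in the statement, while the excluded endpoint $(q,r,\sigma)=(2,\iy,1)$ becomes $(q,r,n)=(2,\iy,2)$, matching the exclusion in the lemma. The homogeneous estimate is then immediate, and the inhomogeneous estimate follows upon taking $F$ supported in $\{s\ge 0\}$, so that the retarded operator $\int_{s<t}U(t-s)F(s)\,ds$ furnished by Keel--Tao reduces to $\int_0^t U(t-s)F(s)\,ds$.

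The only point requiring genuine care is the short-versus-long-time comparison in the second step, namely verifying that the $\ga$-dependent constant $C_\ga$ is finite and uniform in $t$; once the dispersive estimate is put in pure power form, everything else is a direct transcription of the Keel--Tao conclusions, so I do not anticipate a real obstacle.
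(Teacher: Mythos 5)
Your proposal is correct and follows essentially the same route as the paper, which likewise derives the lemma by feeding the unitarity bound \eqref{L2:U(t)} and the dispersive estimate \eqref{eU:disp} into \cite[Theorem 1.2]{KT98}. The one detail you add — absorbing the exponential large-time regime of \eqref{eU:disp} into a single global $|t|^{-n/2}$ decay so that the Keel--Tao hypotheses apply verbatim — is exactly the step the paper leaves implicit, and your verification of it is sound.
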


\subsection{Local theory} %for RNLS} 
\label{ss:local:AV} % (\ref{eq:nls_va} )
Let $X=C_b(\R,\Sigma)\cap L^q(\R,\Sigma^{1,r})$,
here $\Sigma=\sH^1$, $\Sigma^{1,r}=\sH^{1,r}$, 
and $(q,r)$ is admissible such that $q=r=\frac{2n+4}{n}$.  
 Let $E:=\{ \norm{u}_X\le 2C_0 \norm{u_0}_\Sigma\} $ be a closed ball in $X$, 
where 
\begin{align*}
\norm{u}_X:=&\norm{u}_{L^\iy(\R,  \Sigma)}+  \norm{u}_{L^q (\R, \Sigma^{1,r})}\,.
\end{align*}

%\begin{align}
%\psi= e^{-itH_V}\psi_0-i \int_0^t e^{-i(t-s)H_V}  (|\psi|^{p-1} \psi) ds\, ,  \label{Phi:duhamel} \end{align}
%which  is known to be equivalent to (\ref{e: 
Define the map $\Phi$: $X\to X$ by % Duhamel principle 
\[
\Phi(u):=e^{-itH}u_0-i \int_0^t e^{-i(t-s)H_{\Om,\ga}}  (|u|^{p-1} u) ds .
\]
Then the map $\Phi$  is a contraction on  $E\subset X$ if $\norm{u_0}_{\Sigma}<\veps$
provided $\veps$ is sufficiently small.  
%\begin{align*} \norm{f}_{L^\iy_{[0,T_0]} H^2}\le a, \qquad \norm{f}_{L^q_{[0,T_0]} W^{2,r}}\le b  \end{align*}
This shows the  global existence of weak solution $u(t)$ to (\ref{eq:nls_va}) 
in $C_b(\R,\Sigma)\cap L^q(\R,\Sigma^{1,r})$ with $q=r=(2n+4)/n$.
here $C_b$ is the space of continuous functions that are
uniformly bounded in $\R$.
The proof for the contraction property of $\Phi$ 
relies on  the Strichartz estimates in Lemma \ref{l:stri-global}  and smallness of data $u_0$,
which proceeds by following 
 a routine argument  in e.g. \cite{De91,Zheng2012a}. %and \cite[Prop.2.2]{ALZ24non}  

For general data in $\Sigma$, 
we state the local theory for RNLS (\ref{eq:nls_va}) as follows. 
%blowup alternative lemma holds. 

\begin{proposition}\label{p:blup-mNLS} Let $p=1+\frac4{n}$. Let $V(x)=\sgn(\ga)\ga^2|x|^2$, $\ga\in\R$ and $A=M x$, 
$M=M_\Om$ as given in (\ref{eM:even}) and (\ref{eM:odd}). 
Suppose $u_0\in \Sigma$. We have with $q=r=(2n+4)/n$
\begin{enumerate}
\item[(a)] There exists an interval $I_T=[0,T)$ for some positive $T$ and a
unique solution % \Sigma-bounded 
$u(t)$ of (\ref{eq:nls_va}) in $C(I_T,\Sigma)\cap L^q(I_T,\Sigma^{1,r})$.

\item[(b)] Denote $[0,T^*)$, $T^*=T_{max}$ as 
  the maximal lifespan of the solution $u(t)$ of (\ref{eq:nls_va}).   
%The following blowup alternative holds: 
If $T^*=\iy$, then $u$ is a global solution in  $C_{loc}([0,\iy),\Sigma)\cap L^q_{loc}([0,\iy),\Sigma^{1,r})$.
%\begin{enumerate}
   If $T^*<\iy$ is finite, %\mbox{(respectively, $T_*<\iy$)}, 
then $u$ blows up finite time in the sense that 
\begin{align*}
& \lim_{t\to T^*} \norm{\nabla u}_2=\lim_{t\to T^*} \norm{u}_\iy=\iy\,.\\
%&(resp.   \quad  \lim_{t\to -T_*} \norm{\nabla u}_2=\lim_{t\to -T_*} \norm{u}_\iy=-\iy).
\end{align*}
%\end{enumerate}  
\item[(c)] Moreover, there hold the conservation laws for (\ref{eq:nls_va}), namely, 
the mass and energy are conserved on $[0,T^*)$ %$T_*=T_{max}$ %  Hamiltonian % H_{A,V}   whose trajectories 
\begin{align}
& M(u)=\int |u|^2=M(u_0) \label{eM:mass}\\ 
&E(u ):=\int \bar{u} H_{\Om,\ga} u +\frac{2\mu}{p+1}\int | u |^{p+1} = E(u_0)\, \label{Ene:Omga}
%& \ell_A(u):= i\int \bar{u} A\cdot \nabla u \label{ell_A:angular}
\end{align}  
where $\mu=-1$ and  
$H_{\Om,\ga}= -\De+\sgn(\ga)\ga^2 |x|^2 + i (Mx)\cdot \nabla$.  
%H_{\frac{A}{2},V_{\text{effect}}}=-(\nabla-\frac{i}{2} A)^2+ V_{\text{effect}}$. 
%\end{enumerate} 
\item[(d)] The angular momentum $\ell_\Om(u):=- \la \Om\cdot L u,u \ra$ is real-valued and 
\begin{align} 
 \ell_\Om(u)=&\ell_A(u):= i\int \bar{u} A\cdot \nabla u \notag\\ %\label{ell_A:angular}
=& i \la A\nabla u, u\ra 
= \ell_\Om(u_0). \label{conser-Lom} 
\end{align}
\end{enumerate} 
%\end{enumerate}
\end{proposition}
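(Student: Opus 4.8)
The plan is to prove (a) by the same contraction-mapping argument already used for the small-data global statement, the only change being that the lifespan is now finite; to deduce (b) from the contraction together with mass conservation; and to obtain (c)--(d) from the self-adjointness of $H_{\Om,\ga}$ and the gauge invariance of the nonlinearity. For (a), I would set up the Duhamel map $\Phi$ on a ball of $C(I_T,\Sigma)\cap L^q(I_T,\Sigma^{1,r})$ with $q=r=\frac{2n+4}{n}$ the mass-critical admissible pair, running the fixed point with $T=T(\norm{u_0}_\Sigma)$ rather than assuming $\norm{u_0}_\Sigma$ small. The linear-in-$\Sigma^{1,r}$ Strichartz bounds needed to close the argument do not follow verbatim from Lemma \ref{l:stri-global}, which is stated at the $L^r$ level; the obstruction is that the position operator $x$ does not commute with $H_{\Om,\ga}$. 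I would resolve this in one of two equivalent ways. Either one notes that $H_{\Om,\ga}=-\De+\sgn(\ga)\ga^2|x|^2+i(Mx)\cdot\nabla$ is a quadratic Hamiltonian, so its Heisenberg flow sends $x$ and $\nabla$ to fixed time-dependent linear combinations of $x,\nabla$; these transported vector fields commute with $U(t)=e^{-itH_{\Om,\ga}}$ by construction, and applying Lemma \ref{l:stri-global} to them upgrades the estimates to $\Sigma^{1,r}$. Or, more in the spirit of this paper, one transports the whole problem to \eqref{eq:nls} through the $\cR_\ga$-transform \eqref{eR_transf}: by Theorem \ref{thm:p_transform} it intertwines $U(t)$ with $e^{it\De}$, for which the weighted estimates are classical, and by Lemma \ref{l:u-vphi:Sigma-norm} it is a bi-Lipschitz equivalence of $\Sigma$-norms, so local solvability in $\Sigma$ for \eqref{eq:nls_va} is inherited from that for \eqref{eq:nls}. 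Since $p=1+\frac4n$ is $H^1$-subcritical, the nonlinear estimate $\norm{|u|^{p-1}u}_{L^{q'}(I_T,\Sigma^{1,r'})}\lesssim T^{\theta}\norm{u}_X^{p}$ gains a positive power $T^\theta$; taking $T$ small makes $\Phi$ a contracting self-map, which yields existence and uniqueness.

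For (b), the contraction produces a maximal time $T^*$, and the standard continuation criterion applies: if $T^*<\iy$ and $\limsup_{t\to T^*}\norm{u(t)}_\Sigma<\iy$, one could restart the local construction from a time near $T^*$ with a uniform lifespan and extend past $T^*$, a contradiction; hence $\norm{u(t)}_\Sigma\to\iy$. Mass conservation (part (c)) pins the $L^2$-component, so the divergence is carried by $\norm{\nabla u}_2$ and $\norm{xu}_2$. To upgrade to $\lim_{t\to T^*}\norm{\nabla u}_2=\lim_{t\to T^*}\norm{u}_\iy=\iy$, I would argue contrapositively: were $\norm{u}_\iy$ bounded on $[0,T^*)$, then $\norm{|u|^{p-1}u}_{\Sigma^{1,r'}}\lesssim \norm{u}_\iy^{p-1}\norm{u}_{\Sigma^{1,r}}$ depends only linearly on $u$, and a Gronwall estimate on the Duhamel formula would bound $\norm{u(t)}_\Sigma$ up to $T^*$, contradicting the previous step; the same input forces $\norm{\nabla u}_2\to\iy$ through the embedding controlling $\norm{u}_\iy$ by $\norm{\nabla u}_2$ and $\norm{u}_2$.

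For (c)--(d), I would first verify the identities on a dense class of smooth, rapidly decreasing data, where every integration by parts is legitimate, then pass to general $u_0\in\Sigma$ by density and the continuous dependence from (a). Writing $w:=H_{\Om,\ga}u-|u|^{p-1}u$ so that $u_t=-iw$, mass conservation \eqref{eM:mass} follows from $\frac{d}{dt}\norm{u}_2^2=2\,\mathrm{Re}\la u_t,u\ra=2\,\mathrm{Im}\la H_{\Om,\ga}u,u\ra-2\,\mathrm{Im}\la|u|^{p-1}u,u\ra=0$, using that $H_{\Om,\ga}$ is self-adjoint and $\la|u|^{p-1}u,u\ra=\int|u|^{p+1}$ is real. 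Energy conservation \eqref{Ene:Omga} follows from $\frac{d}{dt}E(u)=2\,\mathrm{Re}\la w,u_t\ra=2\,\mathrm{Re}\la w,-iw\ra=0$. For the angular momentum \eqref{conser-Lom}, self-adjointness of $L_A=i(Mx)\cdot\nabla$ (valid because $\mathrm{div}\,A=\tr M=0$) makes $\ell_\Om(u)$ real, while $[L_A,H_{\Om,\ga}]=0$---since the rotation $e^{sM}\in SO(n)$ generated by $L_A$ leaves $-\De$ and the radial potential $\ga^2|x|^2$ invariant and commutes with $A\cdot\nabla$ itself---gives $\frac{d}{dt}\ell_\Om(u)=0$ by the same self-adjoint cancellation.

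I expect the main obstacle to be the weighted part of the $\Sigma$-norm: controlling the position operator $x$ through the rotational–repulsive flow is precisely where the free theory does not transfer automatically, and it is what dictates choosing either the quadratic-Hamiltonian (metaplectic) vector fields or the $\cR_\ga$-transform as the technical engine. The conservation laws are then routine modulo the regularization step needed to license the formal computations for data only in $\Sigma$.
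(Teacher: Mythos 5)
Your overall architecture is sound, and you in fact prove more than the paper does: for parts (a)--(c) the paper simply invokes the local theory for the magnetic NLS from \cite{De91,Zheng2012a} (together with the small-data contraction sketched in Section \ref{ss:local:AV}), and only part (d) is proved in the text. Your two proposed routes to the weighted Strichartz estimates --- transported (metaplectic) vector fields for the quadratic Hamiltonian, or transfer through the $\cR_\ga$-transform combined with the norm equivalence of Lemma \ref{l:u-vphi:Sigma-norm} --- are both legitimate, and the second is exactly in the spirit of the paper. For (d) the paper proceeds by brute force: it writes $\frac{d}{dt}\la L_Au,u\ra=2\Re(I_1-I_2+I_3)$ and kills each term by integration by parts, using $[\De,L_A]=0$ for $I_1$ and $A\cdot\nabla V_\ga=0$ (radiality of $V_\ga$ plus $A\cdot x=0$) for $I_2$; your commutator formulation $[L_A,H_{\Om,\ga}]=0$ packages $I_1$ and $I_2$ more cleanly.

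Two steps, however, need repair. First, in (d) the commutator $[L_A,H_{\Om,\ga}]=0$ says nothing about the nonlinear term: with $u_t=-i\bigl(H_{\Om,\ga}u-|u|^{p-1}u\bigr)$ you still must show $\Im\la |u|^{p-1}u, L_Au\ra=0$, equivalently $\Re\int|u|^{p-1}u\,A\cdot\nabla\bar u=0$. This is precisely the term $I_3$ in the paper's proof; it vanishes because $|u|^{p-1}\Re\bigl(u\,A\cdot\nabla\bar u\bigr)=\tfrac{1}{p+1}A\cdot\nabla\bigl(|u|^{p+1}\bigr)$, which integrates to zero since $\dive A=\tr M=0$. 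You allude to ``gauge invariance of the nonlinearity'' in your opening sentence, but the detailed paragraph attributes all of (d) to the self-adjoint cancellation, which does not cover this term. Second, in (b) your final clause appeals to ``the embedding controlling $\norm{u}_\iy$ by $\norm{\nabla u}_2$ and $\norm{u}_2$''; no such embedding holds for $n\ge 2$ ($H^1\not\hr L^\iy$). The standard route is: if $\norm{\nabla u}_2$ stayed bounded as $t\to T^*$, the virial-type bound $\bigl|\frac{d}{dt}\norm{xu}_2^2\bigr|\le 4\norm{xu}_2\norm{\nabla u}_2$ and Gronwall would keep $\norm{xu}_2$, hence $\norm{u}_\Sigma$, bounded up to $T^*$, contradicting the continuation criterion; the divergence of $\norm{u}_\iy$ then follows from your (correct) contrapositive Gronwall argument, not from an embedding.
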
 

The local wellposedness for general magnetic NLS (\ref{mNLS_AV}) 
were proven where $A$ is sublinear and $V$ subquadratic, cf. 
 e.g., \cite{De91,Zheng2012a}. 
Here we will only show the conservation of the angular momentum 
(\ref{conser-Lom}) in the rotational case (\ref{eq:nls_va}).
%\end{proposition}\footnote{note that there is sign difference between \eqref{conserLom} and \cite{AMS12}.
% }

\begin{proof} 
Let $ \ell_\Om (t): = -\int \bar{u} \Om\cdot  L u= i \int \bar{u} A\nabla u=:\la L_A u,u\ra$. 
% will  prove the case for $n=3$ only.  
Note that $A\nabla= i\Om \cdot L$.  

\iffalse
\footnote{ The scheme is not working for each $L_k$, since 
$\la  L_3 u,u \ra$ is Not a constant.  

Let $ \ell_\Om (t): = -\int \bar{u} \Om\cdot  L u$. 
We will  prove the case for $n=3$ only.  
Since $L=(L_k)$, $\Om=(\Om_k)$, $k=1,2,3$. 
It suffices to show  $\la  L_3 u,u \ra$ is a constant (Not!)
in $t$ owing to symmetry. 
}
\fi

Taking the derivative in $t$, we have, with $ L_A=i A\nabla$ being selfadjoint
\begin{align*}
 \frac{d}{dt} (  \la  L_A u,u \ra ) %\frac{d}{dt}\left(\int\bar{u} L_z u\right)
=&\int (\bar{u}_t L_A u + \bar{u} L_A u_t)\\
= &\la L_Au,u_t\ra+\la u_t,L_Au\ra=2\Re \la u_t,L_Au\ra \\ 
 %=&- \int \overline{ i u_t} A\nabla ( u) - \int  i u_t A \nabla ( \bar{u})\\
%=& - \int(-\De\bar{u} +V_\ga(x)\bar{u}-\lam |u|^{p-1}\bar{u} + i A\nabla u )\ (A\nabla u)\\
 %& - \int (-\De u+V_\ga u-\lam |u|^{p-1}u +i A\nabla u) A\nabla\bar{u}\\
 =&2\Re\int\De u (A\nabla \bar{u})-2\Re\int V_\ga {u} A\nabla \bar{u}\\
 +&2\Re\int \lam |u|^{p-1}{u} (A\nabla \bar{u}) 
 +2\Re \int A\cdot \nabla u\ (-i A\nabla \bar{u})\\
 =&2\Re\int\De u (A\nabla \bar{u}) -2\Re\int V_\ga(x)u (A\nabla \bar{u})\\ 
 +&2\Re\int \lam {u}|u|^{p-1} (A\nabla  \bar{u})+0 \\ 
=:& 2\Re (I_1 - I_2 + I_3) \,.
 \end{align*}

\iffalse
\footnote{\begin{align*}
 &\frac{d}{dt} (  \la  L_3 u,u \ra ) %\frac{d}{dt}\left(\int\bar{u} L_z u\right)
=\int (\bar{u}_t L_3 u + \bar{u} L_3 u_t)\\
 =&\cob{2}\Re\int\De u (x_2\pa_{x_1}\bar{u}-x_1\pa_{x_2}\bar{u})-2\Re\int V_\ga \bar{u}(x_2\pa_{x_1}-x_1\pa_{x_2})u\\
 +&2\Re\int \lam |u|^{p-1}\bar{u} (x_2\pa_{x_1}-x_1\pa_{x_2})u 
 +2\Re \int  \Om\cdot (L_1,L_2,L_3) u\cdot (x_2\partial_{x_1} - x_1\partial_{x_2})\bar{u}\\
 =&\cob{2}\Re\int\De u (x_2\pa_{x_1}\bar{u}-x_1\pa_{x_2}\bar{u})-2\Re\int V_\ga(x)u(x_2\pa_{x_1}-x_1\pa_{x_2})\bar{u}\\ 
 +&2\Re\int \lam \bar{u}|u|^{p-1} (x_2\pa_{x_1}-x_1\partial_{x_2}) u+0\\ 
=:& 2\Re (I_1 - I_2 + I_3) \,.
 \end{align*}
}
\fi

Some identities. 
\begin{lemma} Let $M$ be a real matrix in $\mk{gl}(n,\R)$ and $A=Mx$. Then 
\begin{enumerate}
\item
\begin{align*}
\nabla (A\cdot\nabla u)= M^T\nabla u+ (\mathbb{H}u) (Mx)
\end{align*}
where $\mathbb{H}u=(\pa_{jk}u)_{n\times n}$ is the Hessian of $u$. 
\item 
\be \label{etM:f}
e^{tMx\cdot \nabla} f(x)= f(e^{tM}x)\,. 
\ee
\item 
\begin{align*}
\nabla (u(Mx) )=& M^T(\nabla u) (Mx).
\end{align*}
\item If $M$ is skew-symmetric, then
\be \label{laplacian:A-nabla}
\dive \nabla (A\cdot\nabla u)=A\nabla (\De u) \,.
\ee
\end{enumerate}
\end{lemma}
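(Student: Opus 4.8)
The plan is to treat the four identities separately, since (1) and (3) are pure calculus facts, (2) describes the flow of a linear vector field, and (4) will fall out of (1) once the skew-symmetry of $M$ is used. For (1) I would simply differentiate in coordinates: writing $A\cdot\nabla u=\sum_{i,j}M_{ij}x_j\pa_i u$ and applying $\pa_k$, the term in which $\pa_k$ lands on $x_j$ contributes $\sum_i M_{ik}\pa_i u=(M^T\nabla u)_k$, while the term in which it lands on $\pa_i u$ contributes $\sum_i (Mx)_i\pa_{ik}u$; invoking the symmetry of the Hessian rewrites this as $((\mathbb{H}u)Mx)_k$, which is exactly (1). Identity (3) is just the chain rule for the composition $x\mapsto u(Mx)$: $\pa_i(u(Mx))=\sum_k (\pa_k u)(Mx)\,M_{ki}=(M^T(\nabla u)(Mx))_i$.

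For (2) I would verify that $g(t,x):=f(e^{tM}x)$ solves the linear transport equation $\pa_t g=(Mx\cdot\nabla)g$ with $g(0,\cdot)=f$. Differentiating the composition in $t$ gives $\pa_t g=(Me^{tM}x)\cdot(\nabla f)(e^{tM}x)$, and a short index computation (essentially (3) with $e^{tM}$ in place of $M$) gives $(Mx\cdot\nabla)g=(e^{tM}Mx)\cdot(\nabla f)(e^{tM}x)$; the two expressions coincide because $M$ and $e^{tM}$ commute. Since $e^{tMx\cdot\nabla}f$ is by definition the unique solution of this transport problem, (2) follows.

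Finally, for (4) I would take the divergence of the identity proved in (1). The first piece yields $\dive(M^T\nabla u)=\tr(M^T\,\mathbb{H}u)$, while the divergence of the second piece $\dive((\mathbb{H}u)Mx)$ separates into a genuinely third-order part $\sum_j \pa_j(\De u)(Mx)_j=A\cdot\nabla(\De u)$ together with a leftover second-order part $\tr(\mathbb{H}u\,M)$. Both trace terms are traces of a product of the skew-symmetric matrix $M$ (respectively $M^T$) with the symmetric Hessian $\mathbb{H}u$, hence vanish, leaving precisely $\De(A\cdot\nabla u)=A\cdot\nabla(\De u)$, i.e. (4).

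I expect the only real bookkeeping obstacle to lie in (4): one must correctly sort the third-order contributions (and recognize that $\sum_i\pa_{iij}u=\pa_j\De u$ reassembles them into $A\cdot\nabla(\De u)$) from the two second-order trace terms, and be careful to use the symmetry of the Hessian exactly where it is needed — both to write $(\mathbb{H}u)(Mx)$ in (1) and to annihilate the trace terms in (4). Everything else reduces to routine differentiation.
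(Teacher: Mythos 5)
Your proposal is correct, and parts (1), (3) are exactly the routine coordinate computations the paper leaves unproved ("verified by straightforward calculations"). Where you diverge from the paper is in (2) and especially (4). For (2) the paper merely differentiates both sides of $e^{tMx\cdot\nabla}f(x)=f(e^{tM}x)$ in $t$ and sets $t=0$, which only matches the derivatives at the initial time; your version — checking that $g(t,x)=f(e^{tM}x)$ solves the full transport problem $\pa_t g=(Mx\cdot\nabla)g$, $g(0,\cdot)=f$, using $Me^{tM}=e^{tM}M$, and invoking uniqueness — is the complete form of that argument and is actually the more careful one. For (4) the paper argues conceptually: since $M^T=-M$, $e^{tM}\in O(n)$, the Laplacian commutes with orthogonal changes of variable, so $\De\bigl(u(e^{tM}x)\bigr)=(\De u)(e^{tM}x)$, and differentiating at $t=0$ yields $[\De, A\cdot\nabla]=0$. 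You instead take the divergence of the identity in (1) and observe that the two second-order leftovers are traces of a product of the skew-symmetric $M$ (resp.\ $M^T$) with the symmetric Hessian, hence vanish, while the third-order part reassembles into $A\cdot\nabla(\De u)$. Both routes are valid: the paper's is shorter and explains \emph{why} skew-symmetry matters (it generates rotations, and $\De$ is rotation-invariant), while yours is elementary, self-contained, shows precisely which trace terms skew-symmetry kills, and has the structural advantage of deriving (4) directly from (1).
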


\begin{proof}
Given any matrix $M$ in $\mk{gl}(n,\R)$,
to show  the identity (\ref{etM:f}),
we  differentiate both sides of (\ref{etM:f}) in $t$ 
\be 
(Mx\cdot\nabla)  e^{tMx\cdot \nabla} f(x)= Me^{tM}x\cdot \nabla f (e^{tM}x)\,.
\ee
Then let $t=0$ to give 
\be
Mx\cdot\nabla  f(x)= M x\cdot \nabla f (x)\,.
\ee

The commuting relation (\ref{laplacian:A-nabla}) says that $[\De, A\cdot \nabla]=0$,
which is given in \cite[Lemma 4.2]{BHZ19a}, or 
a simple calculation $[\De, L_k]=0$ for $L=(L_k)$, $k=1,2,3$. 
Here we present a different proof using the property that 
the laplacian commutes with all orthogonal matrices  in $O(n)$. % Stein-Weiss 

Let $A=Mx$, $M^T=-M$, then $\cO=e^{tM}\in O(n)$ and  
\be
\De (u(e^{tM}x) )= (\De u) (e^{tM}x). 
\ee
Differentiating in $t$ yields 
\be 
 \De (Mx)\cdot (\nabla u)(e^{tM}x)  =(Mx)\cdot \nabla (\De u)(e^{tM}x).
\ee
Thus (\ref{laplacian:A-nabla}) follows by setting $t=0$.
\end{proof}

\iffalse
\begin{lemma} Let $\wtd{L}_3=x_2\pa_1-x_1\pa_2$ s.t. $L_3= i\wtd{L}_3$.
Similarly, $L_1= i\wtd{L}_1$ and $L_2= i\wtd{L}_2$.  
Then
\begin{equation}
[\td{L}_2,\td{L}_3]=\wtd{L}_1,\ 
[\td{L}_3,\td{L}_1]=\wtd{L}_2,\
[\td{L}_1,\td{L}_2]=\wtd{L}_3.
\end{equation}
\end{lemma}
However, the relations do not show 
$I_4:=\Re \int  \Om\cdot (L_1,L_2,L_3) u\cdot (x_2\partial_{x_1} - x_1\partial_{x_2})\bar{u}$ is zero. 
I have to include the whole angular momentum operator $i A\nabla$ to show that $I_4$ is zero
as above. 
\fi

 We compute by integration by parts, noting that $[\De,L_A]=0$ in $\R^n$, %\eqref{e:De-Lz-commut}
\begin{align*}
 & I_1= \int \De u \ A\nabla \bar{u}\\
=& -\int A\nabla u\ \De\bar{u}
%  =&\int  \left(\partial^2_{x_1} \psi (x_2\partial_{x_1}\bar\psi ) - \partial^2_{x_2}\psi (x_1\partial_{x_2}\bar\psi) - \pa^2_{x_1}\psi ( x_1\partial_{x_2}\bar\psi ) +\pa^2_{x_2} \psi (x_2\partial_{x_1}\bar\psi)\right)\\
%+&\int \pa_3^2u (x_2\pa_{x_1}\bar{u} - x_1\pa_{x_2}\bar{u}).
 \end{align*}
 %Applying integration by parts, we have\begin{align*}
%& I_1= -\int x_2\partial_{x_1}\psi \partial^2_{x_1}\bar\psi + \int x_1\partial_{x_2}\psi \partial^2_{x_2}\bar\psi + \int \partial_{x_2}\bar\psi \partial_{x_1}\psi + \int x_1\partial_{x_1}\psi \partial^2_{x_2 x_1}\bar{u}\\
  %-&\int \partial_{x_1}\bar{u}\pa_{x_2}\psi - \int x_2 \partial_{x_2}\psi\partial^2_{x_1 x_2}\bar{u}\\
  %-&\int \pa_3^2 \bar{u} (x_2\pa_{x_1}{u} - x_1\pa_{x_2}{u}).\\
  %&I_1= - \int \Delta\bar{u}( x_2\partial_{x_1} - x_1\partial_{x_2})\psi + 2(i) Im \int\partial_{x_1}\psi\partial_{x_2}\bar\psi + \int (x_1\partial_{x_1}u \pa^2_{x_2 x_1}\bar\psi - x_2 \partial_{x_2}\psi\partial^2_{x_1 x_2}\bar\psi)

\begin{align*}
 I_1= - \overline{I_1}\To \Re I_1=0.
\end{align*}
 %After applying integration by parts on $ I_{1,1} $ we get,\\
 
%$ Re I_1= 0$
Let $V_\ga(x)=-\ga^2|x|$ , $\ga< 0$.  
For $ I_2 $, we have by I.B.P., 
\begin{align*}
& I_2= \int V_\ga (x)u\ A\nabla \bar{u}\\
=& -\int A\nabla V_\ga\ |u|^2 - \int V_\ga \bar{u} A\nabla u\\
=&0-\int V_\ga \bar{u} A\nabla u\\ %provided \;\ga_1=\ga_2\\ 
\To& \Re I_2= 0.
\end{align*} 
%here $\widetilde{L}_z=x_2\pa_{x_1}-x_1\pa_{x_2}=\la x_2,-x_1,0\ra\cdot \nabla=-A\cdot\nabla$.

Now for $ I_3 $, writing $A\nabla u= i\Om\cdot L$, we may assume $L=L_3$ w.l.o.g. when $n=3$.
%\edz{the proof for the rotation works now! ---oct.25} 

\begin{align*}
 &\overline{I_3}=\int \lam \bar{u} |u|^{p-1} \widetilde{L}_3  u\\ 
 =& - \int  x_2\pa_{x_1}(\lam(x) \bar{u}|u|^{p-1}) u +\int x_1\pa_{x_2}(\lam(x) \bar{u}|u|^{p-1}) u \\
 =&-\int x_2(\pa_{x_1}\lam) |u|^{p+1} + \int x_1(\pa_{x_2} \lam) |u |^{p+1} -\int \lam u  (x_2\pa_{x_1} - x_1\pa_{x_2}) (\bar{u}|u |^{p-1})\\
 =&-\int (x_2\pa_1-x_1\pa_2)( \lam)\cdot (|u|^{p+1})\\
  -&\int\lam u |u|^{p-1}  (x_2\pa_{x_1}- x_1\pa_{x_2})\bar{u} -\int\lam |u|^2  (x_2\pa_{x_1}- x_1\pa_{x_2})(| u|^{p-1})\\
=&-\int\lam u |u|^{p-1}  \wtd{L}\bar{u} -(p-1) \Re \int \lam (| u|^{p-1}u) \la x_2, - x_1,0\ra \cdot \nabla \bar{u}
\end{align*}
where we used that $\lam(x)=\lam(|x|)\To L_3\lam(x)=0$.
Taking the real part gives 
\begin{align*}
 &\Re (I_3)= -p \Re(I_3)\To
 \Re(I_3) =0. \end{align*}%---Yi first gave proof for 2d; nyla proof did not work, she made it right on the board later 
Combining the results of $ I_1, I_2, I_3 $ above, we obtain
\begin{equation*}
 \frac{d}{dt} ( \left< L_A u,u \right> ) = 0 .
\end{equation*}
\end{proof}

\begin{remark} \label{re:local:AV}
The local existence theory in Proposition \ref{p:blup-mNLS}
continues to hold for
 the magnetic NLS  (\ref{mNLS_AV}).   
The proof can be carried out in the same manner as  in e.g. \cite{ALZh24n}.
\end{remark}   %Section \ref{s:pf-lwpAV}.  

\subsection{Threshold for $L^2$-critical focusing RNLS} %\eqref{eq:nls_va}}    
The threshold theorem for mass critical focusing NLS (\ref{eq:nls}) was obtained in \cite{Wein83}.
It states that if $\norm{\vphi_0}_2< \norm{Q}_2$, then $\vphi(t)$ exists globally in $H^1$.
For each $c>1$, 
 there exists an initial data $\vphi_0=\vphi_{0,c}\in H^1$ with $\vphi_{0,c}=cQ$ %certain profile
  such that the lifespan of the solution $\vphi(t)$ 
is finite, that is, $\cT_{max}<\iy$ and $\vphi$ blows up on $[0,\cT_{max})$. 
Note that  $E(Q)=0$ and  $E(cQ)<0$ for all $c>1$. 

Singularity formation and its structure was studied in \cite{Wein86ss} for a class of dispersive equations.
For NLS (\ref{eq:nls}), Merle \cite{Mer93} gave a characterization for minimal mass blowup solution at the ground state level $\norm{\vphi_0}=\norm{Q}_2$. 
These  minimal mass blow-up solutions are shown to be unstable, where $E(\vphi)>0$, 
cf. e.g. \cite[p.597]{MeRa03}.  

In \cite[Theorem 1.1]{BHHZ23t}, there is proven the  $L^2$-critical threshold result for RNLS (\ref{eq:nls_va}) in the case of
  $V_\gamma={\ga^2} |x|^2$ for positive $\ga$.  
  We now state the analogue  in the case of $\ga<0$.
  Let  $ Q \in H^1(\R^n)$ is be the unique positive, radial  ground state of   
\begin{align} &  -\De Q- |Q|^{4/n} Q= -Q.\quad \label{e:eigen-Q_critic}
\end{align}

\begin{proposition}\label{t:u0>Q-blowup}
 Let $p=1+\frac4n$ and $V_\ga(x)=-\ga^2|x|^2$, $\ga<0$.
 Suppose $u_0\in \mathscr{H}^1$. Then: %and $\lam=1$  
\begin{enumerate}
\item[(a)] If $\Vert u_0\Vert_2<\Vert Q\Vert_2$,  there exists a global in time solution of \eqref{eq:nls_va}. %{e:nls-H-rot}.
\item[(b)] Given any $c\ge 1$, there exists $\ga=\ga(c)$ and $\norm{u_0}_2=c\norm{Q}_2$
such that for all $0<|\ga|< \ga(c)$, 
there exists finite time blowup solution of (\ref{eq:nls_va}) 
so that $\norm{\nabla u(t) }_2\to +\iy$ as $t\to T^*=T_{max}$.  
%\Vert Q\Vert_2$, there exist finite time blowup  solutions of \eqref{eq:nls_va}
%when taking $u(0,x)=c \lam^{n/2}Q(\lam x)$ with $|c|\ge 1$,  \lam>0 
\end{enumerate}
\end{proposition}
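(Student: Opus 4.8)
The plan is to transport both statements to known facts about the free NLS \eqref{eq:nls} through the $\cR_\ga$-transform \eqref{eR_transf} of Theorem \ref{thm:p_transform}, following the attractive case $\ga>0$ treated in \cite{BHHZ23t}. Two features of that transform drive everything. First, at $t=0$ the transform is the identity, so $u_0=\vphi_0$ and in particular $\norm{u_0}_2=\norm{\vphi_0}_2$; moreover, after the substitution $y=e^{tM}x/\ch(2\ga t)$ the orthogonality of $e^{tM}$ makes the Jacobian cancel the prefactor $1/\ch^{n/2}(2\ga t)$, so $\norm{u(t)}_2=\norm{\vphi(\tau)}_2$ for every $t$, whence mass conservation for $u$ is inherited from that of $\vphi$. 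Second, it reparametrizes time by $\tau=\frac{\th(2\ga t)}{2\ga}$, which for $\ga<0$ maps $t\in[0,\iy)$ bijectively onto the \emph{bounded} interval $\tau\in[0,\frac1{2|\ga|})$; this boundedness is the whole mechanism behind the dichotomy.

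For part (a), the hypothesis $\norm{u_0}_2<\norm{Q}_2$ gives $\norm{\vphi_0}_2<\norm{Q}_2$, so by Weinstein's threshold theorem \cite{Wein83} the free solution $\vphi$ is global, $\cT=+\iy$. Since the image $[0,\frac1{2|\ga|})$ of the time map sits inside a compact subinterval of $[0,\cT)$, the norm $\norm{\vphi(\tau)}_\Sigma$ stays bounded there, and Lemma \ref{l:u-vphi:Sigma-norm} transfers this to $\norm{u(t)}_\Sigma\approx\norm{\vphi(\tau)}_\Sigma<\iy$ for all $t\in[0,\iy)$. The blowup alternative in Proposition \ref{p:blup-mNLS}(b) then forces $T^*=\iy$, i.e. $u$ is global.

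For part (b), I would choose a free-NLS datum $\vphi_0$ of mass $c\norm{Q}_2$ that blows up in finite time: for $c>1$ take $\vphi_0=cQ$, for which $\mathcal E(cQ)<0$ forces blowup by the virial obstruction, while for $c=1$ take the pseudo-conformal image of the soliton $e^{it}Q$, i.e. Merle's minimal-mass blowup solution \cite{Mer93}; in either case the free blowup time $\cT=\cT(c)$ is finite. Setting $u_0:=\vphi_0$ and $\ga(c):=\frac1{2\cT(c)}$, any $\ga$ with $0<|\ga|<\ga(c)$ satisfies $\cT(c)<\frac1{2|\ga|}$, so $\cT(c)$ lies in the reachable window and there is a unique finite $T^*=\frac1{2\ga}\,\mathrm{arctanh}\big(2\ga\,\cT(c)\big)\in(0,\iy)$ with $\tau(T^*)=\cT(c)$. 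Running the transform on $[0,T^*)$ yields a solution $u$ of \eqref{eq:nls_va}; differentiating \eqref{eR_transf} and again using orthogonality of $e^{tM}$, the dominant contribution to $\norm{\nabla u(t)}_2$ is $\frac{1}{\ch(2\ga t)}\norm{\nabla\vphi(\tau)}_2$, the remaining terms being $O(\norm{xu(t)}_2+\norm{u(t)}_2)$. As $t\to T^*$ the factor $\ch(2\ga T^*)$ stays finite while $\norm{\nabla\vphi(\tau)}_2\to\iy$, so $\norm{\nabla u(t)}_2\to+\iy$ and $u$ blows up at $T^*$.

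The step I expect to be most delicate is justifying the transform in part (b): Theorem \ref{thm:p_transform} is stated for a global free solution, whereas here $\vphi$ blows up at $\cT(c)$, so one must run the correspondence on each compact $[0,\tau_0]\subset[0,\cT(c))$ and pass to the limit $\tau_0\to\cT(c)$, checking that $u$ remains the $\Sigma$-solution of \eqref{eq:nls_va} on $[0,T^*)$ and that $T^*$ is genuinely maximal. Coupled to this is controlling the lower-order terms in the gradient identity — the quadratic phase $e^{i\frac\ga2|x|^2\th(2\ga t)}$ produces $\norm{xu}_2$-type contributions — which is exactly where Lemma \ref{l:u-vphi:Sigma-norm} is needed to keep them subordinate to $\frac{1}{\ch(2\ga t)}\norm{\nabla\vphi(\tau)}_2$ as $t\to T^*$. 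Finally, the explicit threshold $\ga(c)=\frac1{2\cT(c)}$ makes the advertised phenomenon transparent: enlarging $|\ga|$ shrinks the reachable window $[0,\frac1{2|\ga|})$ below $\cT(c)$ and thereby turns the focusing blowup into a global solution.
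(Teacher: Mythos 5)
Your proposal is correct and follows essentially the same route as the paper: part (a) via Weinstein's threshold for the free NLS combined with Theorem \ref{thm:p_transform} (the case $\cT\ge\frac{1}{2|\ga|}$, here $\cT=+\iy$), and part (b) by choosing a finite-time blowup datum of mass $c\norm{Q}_2$ for \eqref{eq:nls} and setting $\ga(c)=\frac{1}{2\cT(c)}$ so that $|\ga|<\ga(c)$ places the free blowup time inside the reachable window, yielding blowup of $u$ at $T^*=\frac{\tanh^{-1}(2\ga\cT)}{2\ga}$. The delicacy you flag about applying the transform to a non-global $\vphi$ is already absorbed in the statement of Theorem \ref{thm:p_transform}, which is formulated for a maximal (possibly finite-lifespan) solution, so no additional limiting argument is needed.
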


\begin{proof} (a) If $\Vert u_0\Vert_2<\Vert Q\Vert_2$, 
then in view of \cite{Wein83}, 
the solution $\vphi(t)$ of (\ref{eq:nls}) exists globally so that $\norm{\vphi(t,\cdot)}_{H^1}$ is bounded for all $t$.
According to Theorem \ref{thm:p_transform}, 
 $u(t)$ of (\ref{eq:nls_va}) exists globally in time so that $\norm{u(t,\cdot)}_\Sigma$ is bounded for all $t$.

(b) For any $c\ge 1$, there is $\vphi_0$ with $\norm{\vphi_0}= c\norm{Q}_2$  so that
$\vphi$ blowups finite time on $[0,\cT)$ such that $\norm{\nabla \vphi(t)}_2\to +\iy$
as $t\to \cT$.  

According to Theorem \ref{thm:p_transform}, 
if $|\ga|<\frac{1}{2\cT}$, we have 
$u=\cR \vphi$ blows up finite time with $\norm{\nabla u(t) }_2\to +\iy$ 
as $t\to \frac{\tanh^{-1}(2\gamma \mathcal{T})}{2\gamma}$.  

If  $|\ga|\ge \frac{1}{2\cT}$, we have 
$u=\cR \vphi$ exists globally. %with $\norm{\nabla u(t) }_2\to +\iy$ 
%as $t\to \frac{\tanh^{-1}(2\gamma \mathcal{T})}{2\gamma}
\footnote{The $\Sigma$-bound of $u(t)$ will depend on the profile of the initial data $u_0$}
%$\norm{u(t)}_\Sigma$ }
\end{proof}

\begin{proof}[The $\Sigma$-boundedness for $u(t)$ may not be obtained 
as in the trapping case $\ga>0$]  
We will apply the sharp Gagliardo-Nirenberg inequality 
\begin{equation} \label{sharp:GN:Q}
\norm{u}_{2+4/n}^{2+4/n}\le c_{GN}  \norm{ u}_2^{4/n} \norm{\nabla u}_2^2
\end{equation}
where $c_{GN}= \frac{n+2}{n}\norm{Q}_2^{-4/n}$, $Q=Q_0$ is the positive radial solution of 
(\ref{eq:ground_state_solution}),  %e:eigen-Q_critic}),
cf. e.g. \cite{Wein83} or \cite[Lemma 5.2]{BHHZ23t}. 

%\footnote{ (39') along with simple scaling property for $L^2$ norm of $Q_{\lam,1}$ by taking $\lam=1/2$.Note that $\norm{Q_{\lam,1}}_2=\norm{Q_0}_2$ }

Together with the diamagnetic inequality we have 
\begin{equation} \label{GN:mag}
\norm{u}_{2+4/n}^{2+4/n}\le \frac{p+1}{2\norm{Q}_2^{\frac4n} }  \norm{ u}_2^{\frac4n} \norm{(\nabla-i \td{A}) u}_2^2
\end{equation}
where $\td{A}\in L^2_{loc}(\R^n,\R^n)$ is a real, vectorial-valued function that is locally square integrable. 

From (\ref{Ene:Omga}), (\ref{eM:mass}) and (\ref{GN:mag})
\begin{align*}
E(u_0)=E(u )=&\int \bar{u} H_{\Om,\ga} u -\frac{2}{p+1}\int | u |^{p+1}\\ 
\ge& \int \bar{u} H_{\Om,\ga} u -\left( \frac{  \norm{ u}_2}{\norm{Q}_2} \right)^{\frac4n} \norm{\nabla u}_2^2\\ 
\ge& \left( 1-\left( \frac{  \norm{ u}_2}{\norm{Q}_2} \right)^{\frac4n}\right) \norm{(\nabla-\frac{i}{2} A) u}_2^2+ \int V_{\text{effect}} |u|^2
\end{align*}
where the effect potential $V_{\text{effect}}$ is given as $\eqref{Ve:gaOm}$.  
 One sees that if $\norm{ u_0}_2<\norm{Q}_2 $,   since $\in V_{\text{effect}}|u|^2$ has negative sign, 
we could not yet claim the $\Sigma$-bound for $u(t)$ for all $t$. 
\end{proof} 

\begin{remark} The argument above also indicates that 
for general $V$ with a repulsive component along the $x_j$ axial direction,  
under the condition  $\norm{ u_0}_2<\norm{Q}_2 $,   %since $\in V_{\text{effect}}|u|^2$ has negative sign, 
we could not prove the $\Sigma$-bound for $u(t)$ for all $t$. 
\end{remark}

\section{Blowup criterion} %for RNLS} 
\label{s:blup-Criterion} % (\ref{eq:nls_va})

\subsection{Blowup result if $\norm{ u}_2\ge \norm{Q}_2 $}   
% Given $ V_{\text{effect} } $  having negative sign, 
% can we show the blowup % the $\Sigma$-bound for $u(t)$ for all $t$.}
% using the virial identity?

Following \cite{BHHZ23t} we show the virial id. for (\ref{eq:nls_va}) first. 
Let $u$ in $C_0^2(\R^n)\cap \Sigma$ solve \eqref{mu:nls-VA} with $\mu=-\lam(x)$ real
\begin{align}\label{mu:nls-VA}
	\begin{cases}
	iu_t=-\Delta u+V_\ga u - \lam |u|^{p-1}u+i A \cdot\nabla u \\
	u(0,x)=u_0 \in\Sigma=\mathscr{H}^1.
	\end{cases}
	\end{align}
 
\begin{lemma}\label{l:J(t)-VOm} Let $V_\ga=\sgn(\ga)|x|^2$, $\ga<0$ and $A=Mx$, $M$ is skew-symmetric in $\mk{gl}(n,\R)$.
Let $u\in \Sigma\cap C_0^2(\R^n)$ be solution of (\ref{mu:nls-VA}) or \eqref{eq:nls_va}.  
Define $J(t):=\int |x|^2 |u|^2$.  Then 
\begin{align}
J'(t) =& {4} \Im \int x \overline{u} \cdot \nabla u=-4\Im \int x u\cdot \nabla_A \bar{u}\quad(\because x\cdot A=0) \label{e:dJ:AV} \\
% J''(t)=& %2 \int |\nabla u|^2-2 \gamma^2 \int |x|^2 |u|^2
              %	- \left( 6 - \frac{12}{p+1} \right) \lambda \int |u|^{p+1}.
J''(t) 
=& 8 \left( E_{\Om,\ga}(u_0) - \ell_{\Om}(u_0)\right) 
+\frac{4\lam(4-n (p-1)) }{p+1}\int |u|^{p+1}\notag\\
 &+16\ga^2\int |x|^2 |u|^2 \,.\label{ddJ:E-ell}
\end{align}
\end{lemma}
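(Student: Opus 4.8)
The plan is to compute the two derivatives of $J(t)=\int|x|^2|u|^2$ directly, differentiating under the integral sign and inserting $u_t$ from \eqref{mu:nls-VA}, namely $u_t=i\De u-iV_\ga u+i\lam|u|^{p-1}u+A\cdot\nabla u$, and then to re-express the kinetic energy through the conserved quantities \eqref{Ene:Omga} and \eqref{conser-Lom}. For the first derivative, $J'(t)=2\int|x|^2\Re(\bar u u_t)$; the contributions carrying the real factors $V_\ga$ and $\lam|u|^{p-1}$ appear in $u_t$ as $-iV_\ga u$ and $i\lam|u|^{p-1}u$, hence are purely imaginary when paired with $\bar u$ and drop out of $\Re(\bar u u_t)$. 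Integrating the Laplacian piece by parts (using that $|\nabla u|^2$ is real) produces $4\Im\int x\bar u\cdot\nabla u$, while the rotational piece gives $2\int|x|^2\Re(\bar u\,A\cdot\nabla u)=\int|x|^2 A\cdot\nabla|u|^2=-\int\dive(|x|^2A)\,|u|^2=0$, since $\dive(|x|^2 A)=2\,x\cdot A+|x|^2\tr M=0$ by skew-symmetry of $M$ and $x\cdot Mx=0$. This yields \eqref{e:dJ:AV}; the second equality there follows by conjugating and using $x\cdot A=0$ to pass from $\nabla$ to $\nabla_A$.

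For the second derivative I would write $J''=4P'$ with $P=\Im\int x\bar u\cdot\nabla u$, and after one integration by parts obtain $P'=-2\Im\int(x\cdot\nabla\bar u)u_t-n\Im\int\bar u u_t$. Substituting $u_t$ splits $P'$ into four pieces. The $\bar u u_t$ term reproduces the energy density: $\int\Im(\bar u u_t)=-\int|\nabla u|^2+\ga^2 J+\lam\int|u|^{p+1}-\ell_\Om(u)$, using \eqref{conser-Lom}. In the $(x\cdot\nabla\bar u)u_t$ term, the Laplacian piece integrates by parts to $(\tfrac n2-1)\int|\nabla u|^2$ (here $a=|x|^2$ has constant Hessian $2I$ and $\De^2 a=0$); the potential piece equals $-\tfrac{n+2}{2}\ga^2 J$, because $x\cdot\nabla V_\ga=2V_\ga$ gives $\dive(V_\ga x)=(n+2)V_\ga$; and the nonlinear piece equals $-\tfrac{n\lam}{p+1}\int|u|^{p+1}$ after writing $|u|^{p-1}\,x\cdot\nabla|u|^2=\tfrac{2}{p+1}\,x\cdot\nabla|u|^{p+1}$ and integrating by parts (taking $\lam$ constant, i.e.\ the case $\lam\equiv1$ of \eqref{eq:nls_va}).

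The main obstacle is the remaining cross term $S:=\int(x\cdot\nabla\bar u)(A\cdot\nabla u)$ coming from the rotation, for which I must prove $\Im S=\tfrac n2\,\ell_\Om(u)$. I plan to integrate by parts twice: once directly in $x_j$ to get $S=-n\int\bar u\,A\cdot\nabla u-\int\bar u\,(x\cdot\nabla)(A\cdot\nabla u)$, and then, using the commutation $[x\cdot\nabla,\,A\cdot\nabla]=0$ (a short computation from $A=Mx$) together with $\dive A=\tr M=0$, to identify $\int\bar u\,(x\cdot\nabla)(A\cdot\nabla u)=-\overline{S}$. Combining the two gives $S-\overline{S}=-n\int\bar u\,A\cdot\nabla u$, hence $\Im S=-\tfrac n2\,\Im\int\bar u\,A\cdot\nabla u=\tfrac n2\,\ell_\Om(u)$, since $\ell_\Om(u)=i\int\bar u\,A\cdot\nabla u$ is real. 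Assembling the four pieces, the kinetic terms sum to $2\int|\nabla u|^2$, which I replace via \eqref{Ene:Omga} by $E_{\Om,\ga}(u)+\ga^2 J-\ell_\Om(u)+\tfrac{2\lam}{p+1}\int|u|^{p+1}$; all $\ell_\Om$ contributions then collapse to $-8\,\ell_\Om$ and the $\ga^2 J$ terms to $16\,\ga^2 J$, producing \eqref{ddJ:E-ell} once the conserved quantities \eqref{Ene:Omga}, \eqref{conser-Lom} are evaluated at $u_0$.

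Finally, I would remark that the hypothesis $u\in C_0^2(\R^n)\cap\Sigma$ legitimizes every integration by parts with no boundary contribution, and the identity then extends to general $\Sigma$-solutions of \eqref{eq:nls_va} by the standard density/approximation argument.
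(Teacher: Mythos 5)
Your proposal is correct and follows essentially the same route as the paper's computation (which is deferred to the proof of Lemma 3.1 in \cite{BHHZ23t}): differentiate $J$ under the integral, substitute $u_t$ from the equation, integrate by parts via the decomposition $J''=-4n\,\Im\int\bar u u_t-8\,\Im\int(x\cdot\nabla\bar u)u_t$, and then eliminate $\int|\nabla u|^2$ using the conservation laws \eqref{Ene:Omga} and \eqref{conser-Lom}; all of your intermediate constants ($\tfrac{n-2}{2}\int|\nabla u|^2$, $-\tfrac{n+2}{2}\gamma^2 J$, $-\tfrac{n\lambda}{p+1}\int|u|^{p+1}$, and the final coefficients $8$, $16\gamma^2$, $\tfrac{4\lambda(4-n(p-1))}{p+1}$) check out. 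The one place you genuinely improve on the source is the rotational cross term: where the paper computes $\Im\int(x\cdot\nabla\bar u)(A\cdot\nabla u)$ componentwise in terms of the $L_{x_j}$ in dimension three, your argument via $[x\cdot\nabla,\,A\cdot\nabla]=0$ and $\dive A=0$, yielding $S-\overline S=-n\int\bar u\,A\cdot\nabla u$ and hence $\Im S=\tfrac n2\,\ell_\Om(u)$, is dimension-free and considerably shorter.
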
 %\edz{checked Aug.11, 25'}

The lemma can be proved using  integration by part as was carried out 
 in the proof of \cite[Lemma 3.1]{BHHZ23t}. 
See also \cite{Gar12} for the general case of the magnetic NLS. %with a different  

%J'(t) = \cor{4} \Im \int x \overline{u} \cdot \nabla u

\iffalse
\begin{lemma}\label{l:Virial:AV}
%from [profile-magVjun14-15.pdf] \drr What I can do here is to deduce with $Bx=2A$ (if e.g. $A=\sigma x$) e.g. n=2,3,  
In general, let $I(t,n)=\int_{\R^n} |x|^2 |u|^2$,  with suitable A and V
\begin{align*}
&I''(t,n)%= (2\int |\nabla u|^2+2\int |A|^2 |u|^2 {\color{red}-4\Im\la \nabla u, Au\ra})-2\int  x \nabla V_{effect} |u|^2\quad\text{checked Oct.23}\\  
%&{\color{blue}-4\Im \int u A\cdot\nabla\bar{u}}-4\Im\int u A\cdot \overline{-iAu} +2n\mu\frac{ (p-1)}{p+1} \int  |u|^{p+1} dx\\ 
=2\int |\nabla u|^2-2\int |A|^2 |u|^2 -2\int  x \nabla V_{effect} |u|^2 
%%{\color{blue}-4\Im \int  u  A\cdot\nabla \bar{u}}  
+ 2n\mu\frac{ (p-1)}{p+1} \int  |u|^{p+1} dx\\ %\edz{checked oct.23}
=&{\color{blue}4E_V[u]}-4\int V_{e}|u|^2-2\int x\cdot \nabla V_{e}  |u|^2  
-2\int  |A|^2 |u|^2 +\frac{2\mu}{p+1} ( n(p-1)-4) \int  |u|^{p+1} \quad ({\color{red}\dagger})\\
\le&{\color{red}4E[u_0]}-4\int V_{effect}|u|^2-2\int x\cdot \nabla V_{effect}  |u|^2  \\
-&2\int  |A|^2 |u|^2 +\frac{2\mu}{p+1} ( n(p-1)-4) \int  |u|^{p+1} \qquad %\text{{\bf  checked Jun 7,15, 12:58pm}}
\end{align*}
where $A=\om\la -x_2,x_1,0\ra$, 
$V_{effect}(x)=V-\frac{|A|^2}{2}=\frac12(\ga^2-|\Om|^2)|x|^2+\om^2\frac{x_3^2}{2}$
according to \eqref{e:V_eff-ga-om-xsq}.\edz{checked oct.23}
$\therefore$ I conclude that if $n=3$,  $A=\om\la -x_2,x_1,0\ra$, $V(x)=\frac{\ga^2}{2}|x|^2$ ( $\ne V_e=V_{effect}$!)
the virial id for $J''(t)$ is correct. 
\end{lemma}
\fi

%Q: Is it possible  \begin{align*} \Re\int x \nabla u \cdot\overline{L_z u}=0?\end{align*}

%By computation,
%	\begin{equation*}
%it is easy to show that $\int u \overline{L_z u}$ is real, so we can drop the $\Re$ in $I_4$. \\

Recall that     %	\begin{equation*} \begin{cases}
	%i u_t = - \frac{1}{2} \Delta u + \frac{\gamma^2}{2} |x|^2 u - \lambda |u|^{p-1} u - \Omega \cdot L u, \\
	%u(x, 0) = u_0(x) \in H^s(\mathbb{R}^3), \end{cases}
	%\end{equation*}
 $\gamma^2$ is the potential frequency, $\lambda$ is a positive constant, $\Omega = (\Omega_1, \Omega_2,  \Omega_3)$ is the rotation frequency, and $L = -i x \wedge \nabla$.
%For  convenience, we write the equation as	\begin{equation*}
%	u_t = i \frac{1}{2} \Delta u - i \frac{\gamma^2}{2} |x|^2 u + i \lambda |u|^{p-1} u + i \Omega \cdot L u.\end{equation*}
 Denote
	\begin{align*}
	L_{x_1} = i \left(x_3 \partial_{x_2} - x_2 \partial_{x_3} \right), \quad
	L_{x_2} = i \left(x_1 \partial_{x_3} - x_3 \partial_{x_1} \right), \quad
	L_{x_3} = i \left(x_2 \partial_{x_1} - x_1 \partial_{x_2} \right).
	\end{align*}
Then %an easy computation shows that 
$L = (L_{x_1}, L_{x_2}, L_{x_3})$.
We use the notation $\langle \cdot, \cdot \rangle$ to denote the inner product defined as
$\displaystyle	\langle f, g \rangle= \int f \overline{g}$. 

To show the blowup assertion,  
we use the virial identity for RNLS as given in Lemma \ref{l:J(t)-VOm} along with an uncertainty principle inequality. 
However, the solution of the ODE associated with (\ref{ddJ:E-ell}) has a different behavior in $t$.    
If $p=1+4/n$, we have
\begin{equation}
	% J(t) := \int |x|^2 |u|^2.\label{e:variance-J(t)}	
J''(t)- 16\ga^2J(t)
= 8 \left( E_{\Om,\ga}(u_0) - \ell_{\Om}(u_0)\right).\label{ode:J(t)}
\end{equation} %\end{document} 

\subsection{Limiting behavior near ground state profile} \label{ss:behavior-aboveQ} % Q profile  
For the standard NLS \eqref{eq:nls}, if $ p=1+4/n$
and
 $\vphi_0\in S_{\al^*}$, i.e., $\vphi_0 \in H^1$ is slightly above the ground state $Q$ as in (\ref{eB_alpha})
with negative energy, 
there holds the $\log$-$\log$ law  (\ref{phi(t):loglog}) 
along with its limiting behavior (\ref{behavior:loglog}), 
%This is Merle and Raphael made improvements on Weinstein's result by obtaining the exact blowup rates and profiles for the solutions when 
%$\norm{\vphi_0}_2$ is above the ground state 
see \cite{MeRa03,MerRa05a, MerRa05b,Ra05s}.   
For the profile of the blowup solutions, they are self-similar near the singularity,
i.e., one can write $\vphi=\vphi_R+\eps$, where 
\begin{equation*}
    \vphi_R(t,x)=
        \frac{1}{L^{n/2}(t)}
        Q\left( \frac r{L(t)} \right) e^{i\int_0^t\frac{ds}{L^2(s)}}
\end{equation*} 
with~$r=|{x}|$ and $\eps\in L^2$,  
cf. \cite{MeRa03,YangRouZh18}. % Subsection 3.2
%The self-similar profile~$Q(\rho)$ is the ground state of the equation\[
%    R^{\prime\prime}(\rho) +\frac{d-1}{\rho} R^\prime -R+| R|^{4/d} R=0. \]
%where $R$ attains  global maximum  $\vphi_R$ at 0.  
Thus it is easy to observe that the blowup rate ~$\norm{\nabla\vphi}_2
=\norm{\nabla\vphi_R}_2=L(t)\inv$ is given by %the  $\log$-$\log$ law
\begin{equation}
    L(t) \sim \left(
        \frac{2\pi( \cT-t)}{\log\log1/(\cT-t)} \right)^{1/2}\to 0_+ 
    \qquad \text{as}\ t\to \cT=\mathcal{T}_{max}.
\label{eq:logloglaw}
\end{equation}
%$\int_0^t\frac{1}{L^2(s)}ds= \int_0^t\frac{ |\log|\log (T-s)|\ |  }{T-s} ds \le C(t)<\iy$  converges. 
\iffalse
\footnote{one can calculate 
\begin{align*} 
& \int_{\R^n} |\vphi_R(t,x)|^2 dx=  \int_{\R^d} |Q(x)|^2 dx\\ %=\norm{R}^2_{L^2(\R^d)}\\
& \norm{\vphi_R(t,\cdot)}_\iy=\frac1{ L(t)^{n/2} }Q(0) = \frac1{ L(t)^{n/2} }\norm{Q}_{L^\iy(\R^n)}\\
& \norm{\nabla_x\vphi_R(t,\cdot)}_{L^2(\R^n)}=  \frac1{ L(t)}\norm{\nabla Q}_{L^2(\R^n)}\,. 
\end{align*}
}\fi
Concerning the rotational  NLS (\ref{eq:nls_va}), % {Blowup profile description for \eqref{eq:nls_va}}
  %  data near  ground state  \norm{u_0}_2\ge \norm{Q_0}_2 
it is desirable to further examine the blowup behavior for the solution $u(t)$ near  $Q$ 
in view of  % threshold level $\norm{Q}_2
 Proposition \ref{t:u0>Q-blowup} and %the $\log$-$\log$ law in 
Theorem \ref{thm:log-log-u}. %for general %sublinear $A$ and  subquadratic V 
 Heuristically, the blowup profile is of self-similar concentration type, 
and the confinement geometry of a quadratic potential is not sufficient to counter-effect 
the concentrating behavior of the ground state profile which has exponential decay. % away from the center of concentration 
We comment that  
the presence of  $A$ generally may contribute to the {\em phase} as well as 
the concentration center of the blowup solution,  
cf. \cite{CazE88,Squa2009} %for $p<1+4/d
and \cite{BHZ19a}. %for $p=1+4/d 

%\footnote{ The path $x=x(t)$ is the {\em concentration line} (which can be considerably influenced by the presence of $B$,
%see the phase portraits in figures~\ref{1fig}-\ref{2fig}). Initial data~\eqref{initialD} should also be thought as corresponding to a {\em point particle}
%with position $x_0$ and velocity $\xi_0$.  }

At the ground state level  $\norm{u_0}_2=\norm{Q}_2$, 
in  the attractive case $\ga>0$, %via $\cR$ transform  
 the blowup solutions were described 
 in \cite[Proposition 4.5]{BHZ19a} 
 as  having finite time blowup speed $\norm{\nabla u}_2=O( (T^*-t)\inv)$ as $t\to T^*<\iy$.
In the repulsive case $\ga<0$, 
Corollary \ref{c:blup-infty} % $L^2$ subcritical focusing mNLS  subcritical regime $p\in (1,1+\frac4d)
 constructs a global solution that is exponentially unbounded in the sense $\norm{\nabla u}_2= O( e^{2|\ga| t} )$ as $t\to \iy$.  

When the data is slightly above the ground state level, %$\norm{u_0}_2>\norm{Q}_2$, 
\eqref{phi:limit-behavior} provides certain limiting behavior asymptotics.  
 % isotropic case for $V$.  
In Section \ref{s:numerics}, our computations for the isotropic potential $V_\ga$ in 2D
   supports  that such blowup profile is a stable one, %
where we apply the dynamic rescaling method in the simulations. %the  non-radial case.  
%where  Xiao-ping Wang and Landman initially did for NLS in 1991 

We remark that 
 the anisotropic case $V_\ga=\sum\sgn(\ga_j)\ga_j^2x_j^2$ remains a challenging one. %which has been a project under active study recently.   
In this case, there is the breaking symmetry of rotation, hence the failure of angular momentum conservation.
Such  technical issue  % induce 
 might lead to difficulty in proving the blowup criterion for (\ref{eq:nls_va}).  %based on virial identity argument 
Numerically it may give rise to some instability when applying the moving meshing method. 

% The case $V=\sum\frac{\ga_j^2}{2} x_j^2$ {\em open}.  %[AMS12], [Carles 11], [HaoHsiaoLi07].Indeed, the blowup was shown for $p\in [1+\frac{4}{N}+\de,  1+\frac{4}{N-2})$ only. 
%In the anisotropic case \cite{LeoZ18a} has proven that if $|\Om|<\ga$, then global existence and there holds the exponential bound at infinity time.If $\Om|>\ga$, then there exist blowup solution for  $\norm{u_0}_2=\norm{Q}_2+\veps$ for any $\veps>0 

%When $p=1+4/N$, $N=2,3$, if $V=\ga^2 |x|^2$, if $\norm{Q}_2<  \norm{u_0}< \norm{Q}_2+\al$, and $E(u_0)< \int V |u_0|^2$ then the solution u blowup in finite time $T=T^*$, $\norm{\nabla u}_2\sim ( \frac{2\pi \log |\log (T-t)| }{(T-t)}  )^{1/2}$  or $\norm{\nabla u}_2\gtrsim (T-t)\inv$ ( if $E(\vphi_0)>0$).

\iffalse \footnote{ %\begin{theorem}[$\sH^1$  solution]\label{t:H1-mag}  
Let  $1\le p< 1+4/(d-2)$   and  $u_0\in \sH^1$. %r=p+1   q=\frac{4p+4}{d(p-1)}   F sat {e:derF_p}  {e:F-muGz}   
 The main results in \cite{AMS12} and \cite{Zheng2012a,HLZh21u} show that given  $(A,V)$ verifying (\ref{eAV:sublin-qua})-(\ref{eB:eps}), there are global existence in the defocusing   and $\Sigma^1$-subcritical NLS, regardless  $V$ is positive or negative.  In the focusing and mass supercritical (but energy subcritical)  there may be blowup in finite time. In the defocusing $\sH^1$-subcritical case,  \cite{Z12} proved that  there exists a unique $\sH^1$-bounded  global solution in $C(\R,\sH^1)\cap L^q_{loc}(\R,\sH^{1,r})$.
In the focusing case $\mu<0$, if $1\le p<  1+4/d$, then \eqref{e:u_A.V-mu.p} has an $\sH^1$-bounded global solution in
$C(\R,\sH^1)\cap L^q_{loc}(\R,\sH^{1,r})$.
}
\fi

%\end{document}

\section{Spectral property and the $\log$-$\log$ law}\label{s:spec-loglog}

Now we will always assume $p=1+\frac{4}{n}$ unless otherwise specified.
To show the blowup rate for initial data above the ground state $Q$ as stated in Theorem \ref{thm:log-log-u},
we need the following \emph{Spectral Property}.
Let $y$ denote the spatial variable in $\mathbb{R}^n$.

\begin{SP}\label{h:spectral_property}
Consider the two linear operators
	\begin{align*}
	L_1
	:= - \Delta + \frac{2}{n} \left( \frac{4}{n} + 1 \right) Q^{\frac{4}{n} - 1} y \cdot \nabla Q, \qquad
	L_2
	:= - \Delta + \frac{2}{n} Q^{\frac{4}{n} - 1} y \cdot \nabla Q,
	\end{align*}
and the real-valued quadratic form
	\begin{align*}
	H(\varepsilon, \varepsilon)
	:= (L_1 \varepsilon_1, \varepsilon_1) + (L_2 \varepsilon_2, \varepsilon_2)
	\end{align*}
for $\varepsilon = \varepsilon_1 + i \varepsilon_2 \in H^1$.
Let
	\begin{align*}
	Q_1
	:= \frac{n}{2} Q + y \cdot \nabla Q, \qquad
	Q_2
	:= \frac{n}{2} Q_1 + y \cdot \nabla Q_1.
	\end{align*}
Then there exists a universal constant $\delta_0 > 0$ such that for every $\varepsilon \in H^1$,
if
	\begin{align*}
	(\varepsilon_1, Q)
	=& (\varepsilon_1, Q_1)
	= (\varepsilon_1, y_j Q)_{1 \leq j \leq n}=0,\\
	 (\varepsilon_2, Q_1)
	=& (\varepsilon_2, Q_2)
	= (\varepsilon_2, \partial_{y_j} Q)_{1 \leq j \leq n}
	= 0,
	\end{align*}
then
	\begin{align*}
	H(\varepsilon, \varepsilon)
	\geq \delta_0 \left( \int |\nabla \varepsilon|^2 dy + \int |\varepsilon|^2 e^{-|y|} dy \right).
	\end{align*}
\end{SP}

The proof of  Spectral Property \ref{h:spectral_property} in any dimension is not complete.
Merle and Rapha\"el \cite{MerRa05a} first proved that this spectral property holds for dimension $n=1$,
using the explicit solution $Q(x) = \left( \frac{3}{\cosh^2(2x)} \right)^\frac{1}{4}$ of \eqref{eq:ground_state_solution}.
Later,  Fibich, Merle and Rapha\"el \cite{FibMeRa06} gave a numerically-assisted proof in $n = 2, 3, 4$.
Recently,  an improved numerically-assissted proof by Yang, Roudenko and Zhao \cite{YangRouZh18}
shows that  Spectral Property \ref{h:spectral_property} holds for $n\leq 10$ and also for $n=11, 12$ in the radial case.
\footnote{The  Spectral Property \ref{h:spectral_property} is equivalent to the coercivity for $L_1$ and $L_2$ on quadratic forms,
 the study of which involving the ground
state solution $Q$ naturally appears in a perturbation setting when dealing with stability problem. 
These two operators are related to the Lyapounov functionals $L_\pm$,  where 
$L_+=-\De+1 -(1+\frac{4}{n})  Q^{\frac{4}{n}} $ and
$L_-=-\De+1 -Q^{\frac{4}{n}} $, see \cite{FibMeRa06}. 
%quadratic forms are then related to the asymptotic form of the Hamiltonian near Q and their coercivity properties can be derived from the variational formulation of Q  Weinstein 
}

%Based on the  Spectral Property [A] \ref{h:spectral_property},  the following blowup rate for equation \eqref{eq:nls} holds according to

\begin{theorem}\label{thm:log-log-phi}
Let $p = 1 + \frac{4}{n}$ in equation \eqref{eq:nls}. 
Let $1\leq n \leq 10$.
Then there exists a universal constant $\alpha^*> 0$ such that the following is true.
Suppose	$\vphi_0 \in B_{\alpha^*}$ satisfies
	\begin{align*}
	\mathcal{E}(u_0)
	=\int |\nabla u_0|^2 dx
	-\frac{n}{n+2} \int |u_0|^{2+\frac{4}{n}} dx
	<0.
	\end{align*}
Then $\varphi \in C([0, \mathcal{T}); H^1)$ is a blowup solution of  \eqref{eq:nls} in finite time $\mathcal{T} < \infty$,
which admits the $\log$-$\log$ blowup rate
	\begin{align}\label{e:log-log_Q}
	\lim_{t \rightarrow \cT}
	\frac{\| \nabla \varphi(t) \|_2}{\| \nabla Q \|_2}
	\sqrt{ \frac{\mathcal{T} - t}{\log \left| \log (\mathcal{T} - t)  \right|} }
	= \frac{1}{\sqrt{2 \pi}}\,,
	\end{align}
where $Q$ is the unique solution of \eqref{eq:ground_state_solution}. 
Moreover, according to \cite{Ra05s}, 
the set $S_{\al^*}=B_{\al^*}\cap \{\phi\in H^1:\mathcal{E}(\phi)<0\}$ is an open stable manifold
in the sense that given any element $\vphi_0$ in $S_{\al^*}$,
there is an open neighborhood $U=U_{\vphi_0}$ of $\vphi_0$ in $S_{\al^*}$ such that 
the blowup solution flow $\phi\mapsto \phi(t)$ admits the $\log$-$\log$ blowup rate (\ref{e:log-log_Q})
for all $\phi\in U$.  

In addition, the above statements hold for  $n=11, 12$ for radial solutions.
\end{theorem}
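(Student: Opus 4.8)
The plan is to follow the framework of Merle and Rapha\"el \cite{MerRa05a,MerRa05b}, since the statement is precisely their sharp log-log result together with the stability assertion of \cite{Ra05s}, made available in dimensions $1\le n\le 10$ (and radially for $n=11,12$) by the verification of Spectral Property \ref{h:spectral_property} in \cite{FibMeRa06,YangRouZh18}. First I would use the variational characterization of $Q$ and the sharp Gagliardo--Nirenberg inequality \eqref{sharp:GN:Q} to show that, for data in $B_{\alpha^*}$ with $\alpha^*$ small and $\mathcal{E}(\varphi_0)<0$, the solution stays close in $\dot H^1$ (modulo scaling and translation) to the ground state throughout its lifespan; this is what legitimizes a modulation decomposition. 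One writes
\begin{equation*}
\varphi(t,x)=\frac{1}{\lambda(t)^{n/2}}\big(Q_{b(t)}+\varepsilon\big)\!\left(\frac{x-x(t)}{\lambda(t)}\right)e^{i\gamma(t)},
\end{equation*}
where $Q_b$ is an approximate self-similar profile (a controlled deformation of $Q$ solving the linearized self-similar equation to high order in the small parameter $b$), and $(\lambda,x,\gamma,b)$ are modulation parameters fixed by imposing on $\varepsilon$ the orthogonality conditions of Spectral Property \ref{h:spectral_property}.

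Next I would derive the modulation equations. Passing to the rescaled time $s$ with $ds=dt/\lambda^2$, the geometric parameters obey a dynamical system whose leading behavior is $\lambda_s/\lambda\approx -b$ while $\varepsilon$ is driven by the linearized flow with source terms that are formally lower order. The heart of the matter is a bootstrap argument controlling $\varepsilon$: I would construct a Lyapunov functional of the schematic form $\int|\nabla\varepsilon|^2+\int|\varepsilon|^2e^{-|y|}$ and, using the mass and energy conservation laws together with the almost-monotonicity of $\lambda$, show that its evolution is dominated by the quadratic form $H(\varepsilon,\varepsilon)$ evaluated on $\varepsilon$ satisfying the orthogonality conditions. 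It is exactly here that Spectral Property \ref{h:spectral_property} enters: its coercivity bound closes the estimate and forces $\varepsilon$ to remain small relative to $b$ throughout the trapped regime.

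With the bootstrap in place, the log-log law is read off from the asymptotic integration of the $(b,\lambda)$ system. The equation for $b$ is governed by an exponentially small leading term $\sim e^{-\pi/b}$, which integrates to $b\sim \pi/\log s$; combining with $\lambda_s/\lambda\approx -b$ and reverting to the variable $t$ yields $\lambda(t)\sim\big(2\pi(\mathcal{T}-t)/\log|\log(\mathcal{T}-t)|\big)^{1/2}$, so that $\|\nabla\varphi\|_2/\|\nabla Q\|_2\approx \lambda^{-1}$ produces the precise constant $1/\sqrt{2\pi}$ in \eqref{e:log-log_Q}. Finite-time blowup ($\mathcal{T}<\infty$) follows because the negative energy traps the flow in the regime $b>0$, forcing $\lambda\to 0$ as $s\to\infty$ while $\mathcal{T}=\int_0^\infty\lambda^2\,ds$ converges. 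The stability claim, that $S_{\alpha^*}$ is an open manifold of such solutions, follows from the openness of the bootstrap trapping regime in $H^1$, which is the content of \cite{Ra05s}.

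The one genuine obstacle is the Spectral Property itself. Its coercivity is not accessible by soft arguments in general dimension, and I expect to have to invoke the explicit one-dimensional computation of \cite{MerRa05a} together with the computer-assisted spectral verifications of \cite{FibMeRa06,YangRouZh18}; this is precisely what confines the theorem to $1\le n\le 10$ in the full case and to radial data for $n=11,12$. Everything else in the scheme---the modulation algebra, the Lyapunov estimate, and the ODE asymptotics---is robust once the coercivity is granted.
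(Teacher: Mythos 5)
The paper does not prove this theorem at all: it is quoted as the known sharp log-log result of Merle and Rapha\"el \cite{MerRa05a,MerRa05b}, with the dimensional restriction coming from the numerical verification of Spectral Property \ref{h:spectral_property} in \cite{FibMeRa06,YangRouZh18} and the stability assertion taken from \cite{Ra05s}. Your outline is a faithful and correctly organized summary of that cited argument --- the modulation decomposition around $Q_b$, the orthogonality conditions, the coercivity supplied by the Spectral Property, the bootstrap, and the $(b,\lambda)$ ODE asymptotics producing the constant $1/\sqrt{2\pi}$ --- so it is consistent with, and more detailed than, what the paper itself provides; just be aware that as written it is a roadmap deferring the hard estimates to the references rather than a self-contained proof.
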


\section{$\mathcal{R}_\ga$-transform for $-\De -{\ga^2}|x|^2 + i A \cdot \nabla $}\label{s:R:transform} 
Let $H_{A,\ga}:=-\De +\sgn(\ga){\ga^2}|x|^2+ i A \cdot \nabla$. %with $\ga<0$.
In \cite{BHZ19a},  there was introduced the $\cR_\ga$-transform for the case ${\ga}>0$. 
In order to examine the behavior of the solution to (\ref{eq:nls_va}) where $\ga<0$,
 we extend the $\mathcal{R}_{\ga}$-transform to the case $\ga$ being negative,
 which  allows to convert the solutions between (\ref{eq:nls}) and \eqref{eq:nls_va}. 
This can be viewed as an extension of the lens type transforms to the rotational setting, also consult 
\cite{Car2011t} and \cite{Cassano2016}
for the cases of time-dependent potentials and magnetic potentials, respectively.  

\begin{theorem}\label{thm:p_transform} 
Denote $\cR:=\cR_{\ga}$ for simplicity. %\edz{$\ga<0$; sign of the following needs to be changed to $-\ga$} 
Let $p=1+\frac4n$ and $\ga<0$.
Let $\vphi_0=u_0$ and $\varphi\in C([0,\mathcal{T}),H^1)$ be a (forward) maximal solution to equation \eqref{eq:nls}.
Define
	\begin{align}\label{eq:p_transform}
	\mathcal{R} \varphi (t,x)
	:=\frac{1}{\cosh^\frac{n}{2}(2\gamma t)}
	e^{i\frac{\gamma}{2}|x|^2\tanh(2\gamma t)}
	\varphi\left(\frac{\tanh(2\gamma t)}{2\gamma},\frac{e^{tM}x}{\cosh(2\gamma t)}\right).\qquad
	\end{align}
%for $t$ in $\R$. %
Let $\cT=\cT_{max}$ and $T^*=T_{max}$ 
and denote $[0,\cT)$ and $[0,T^*)$ the forward maximal intervals for $\vphi$
and $u$ respectively. 
 If $\cT<\frac{1}{2|\gamma|}$, then
$u:=\mathcal{R} \varphi$ is a solution of equation \eqref{eq:nls_va}
in $C\left( \left[ 0,\frac{\tanh^{-1}(2\gamma \mathcal{T})}{2\gamma} \right),\mathscr{H}^1 \right)$.
%In particular,
If $\mathcal{T} \geq \frac{1}{2|\gamma|}$,
then $u\in C([0,\infty),\mathscr{H}^1)$ is a global solution of equation \eqref{eq:nls_va}.

The transform $\mathcal{R}$ is invertible and its inverse is given by
	\begin{align}\label{eq:inverse_r_transform}
	\mathcal{R}^{-1} u(t,x)
	=\frac{1}{\left( 1-4\gamma^2 t^2 \right)^\frac{n}{4}}
	e^{-i \gamma^2 |x|^2 \frac{t}{1-4\gamma^2 t^2}}
	u\left( \frac{\tanh^{-1}(2\gamma t)}{2\gamma},
	\frac{e^{-\frac{\tanh^{-1}(2\gamma t)}{2\gamma} M} x}{\sqrt{1-4\gamma^2 t^2}} \right).
	\end{align}
The inverse transform $\mathcal{R}^{-1}$ maps $u\in C([0,T^*),\mathscr{H}^1)$,
a solution of equation \eqref{eq:nls_va},
to $\varphi\in C\left( \left[0,\frac{\tanh(2\gamma T^*)}{2\gamma} \right),H^1 \right)$,
a solution of equation \eqref{eq:nls} for any $t<\frac{1}{2|\ga|}$.   
%\footnote{regardless of $T^*$ finite or infinity, in which case \nabla \phi might go to infinite or not depending on the growth of $\nabla u$ as $t\to T^*$}
\end{theorem}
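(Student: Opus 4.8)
The plan is to verify the claim by direct substitution, treating \eqref{eq:p_transform} as a lens-type (pseudo-conformal) change of variables and checking that it intertwines \eqref{eq:nls} with \eqref{eq:nls_va}. To organize the computation I introduce the scalar coefficients
\[
b(t)=\cosh(2\ga t), \qquad a(t)=\tfrac{\ga}{2}\tanh(2\ga t), \qquad \tau(t)=\frac{\tanh(2\ga t)}{2\ga},
\]
and the rotated–rescaled variable $\eta=\eta(t,x)=b(t)^{-1}e^{tM}x$, so that $\cR\vphi(t,x)=b(t)^{-n/2}e^{ia(t)|x|^2}\vphi(\tau(t),\eta)$. First I would record the elementary identities
\[
\tau'(t)=b(t)^{-2}, \qquad \frac{b'(t)}{b(t)}=4a(t), \qquad a'(t)=\ga^2-4a(t)^2,
\]
the last of which equals $\ga^2\sech^2(2\ga t)$ and is precisely what will generate the repulsive potential $-\ga^2|x|^2$.

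Next I would compute $\pa_t u$, $\De u$, $(Mx)\cdot\nabla u$ and $|u|^{p-1}u$ for $u=\cR\vphi$ by the chain rule, pushing all derivatives onto $\vphi$ and its arguments. The skew-symmetry $M^T=-M$ enters in three places: $(Mx)\cdot x=0$ kills the cross term between the Gaussian phase and the rotation in $iA\cdot\nabla u$; the orthogonality $e^{tM}\in SO(n)$ gives $\De_x[\vphi(\tau,\eta)]=b^{-2}(\De_\eta\vphi)(\tau,\eta)$ and $x\cdot\nabla_x[\vphi(\tau,\eta)]=\eta\cdot\nabla_\eta\vphi$; and the commutation $Me^{tM}=e^{tM}M$ together with $e^{-tM}=(e^{tM})^T$ converts the rotation term into $(M\eta)\cdot\nabla_\eta\vphi$, matching the drift produced by $\pa_t\eta=-4a\,\eta+M\eta$. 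After factoring out $b^{-n/2}e^{ia|x|^2}$, the advective terms $-4ia\,\eta\cdot\nabla_\eta\vphi$, the drift $i(M\eta)\cdot\nabla_\eta\vphi$, and the $-2ina\,\vphi$ contributions cancel identically between the two sides; the $|x|^2$-terms cancel precisely because $a'=\ga^2-4a^2$; and the residual identity collapses to $b^{-2}\big(i\vphi_\tau+\De_\eta\vphi+|\vphi|^{4/n}\vphi\big)=0$. Here the mass-critical exponent $p=1+\tfrac4n$ is essential: it forces $|u|^{p-1}u$ to scale as $b^{-2}$, the same power carried by $i\pa_t$ through $\tau'=b^{-2}$, so that the nonlinearity lands exactly on the NLS \eqref{eq:nls} rather than on a rescaled version of it.

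For the maximal-interval statement I would use that, since $\ga<0$, the map $t\mapsto\tau(t)$ is smooth and strictly increasing on $[0,\iy)$ with $\tau'=b^{-2}>0$ and range $[0,\tfrac{1}{2|\ga|})$. Hence $\cR\vphi(t,\cdot)$ is defined precisely for those $t$ with $\tau(t)<\cT$. If $\cT<\tfrac{1}{2|\ga|}$ this inverts to $t<\tfrac{\tanh^{-1}(2\ga\cT)}{2\ga}$, and if $\cT\ge\tfrac{1}{2|\ga|}$ then $\tau(t)<\cT$ for all $t\ge0$, giving a global solution; the claimed $\mathscr{H}^1$-membership and continuity in $t$ follow from the smoothness of $a,b,\tau$ on these intervals together with the fact that the Gaussian phase and rescaling carry $H^1$ into the weighted space $\Sigma=\mathscr{H}^1$. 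Finally, the formula \eqref{eq:inverse_r_transform} is obtained by inverting the three relations above using $\cosh(\tanh^{-1}z)=(1-z^2)^{-1/2}$ and $\sinh(\tanh^{-1}z)=z(1-z^2)^{-1/2}$, and $\cR^{-1}$ is shown to be the two-sided inverse either by composing $\cR^{-1}\!\circ\cR$ and $\cR\!\circ\cR^{-1}$ (the group law $e^{-sM}e^{sM}=I$ handling the rotation) or, equivalently, by running the substitution of the previous paragraph in reverse to see directly that $\cR^{-1}u$ solves \eqref{eq:nls} whenever $u$ solves \eqref{eq:nls_va}.

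I would expect the main obstacle to be the bookkeeping of the rotation: one must verify that every occurrence of $M$ recombines correctly—$(Mx)\cdot x=0$, the orthogonal invariance of $\De$, and $Me^{tM}=e^{tM}M$—so that the first-order drift terms generated by $\pa_t\eta$, by $iA\cdot\nabla u$, and by the Gaussian phase cancel exactly, leaving no spurious rotational contribution in the reduced equation. This is the step where the $\ga>0$ computation of \cite{BHZ19a} must be adapted with hyperbolic in place of trigonometric functions, and where a single sign error would silently spoil the cancellation.
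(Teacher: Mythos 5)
Your proposal is correct and follows essentially the same route as the paper: a direct chain-rule substitution showing that $\cR$ intertwines \eqref{eq:nls} with \eqref{eq:nls_va} (using $\la Mx,x\ra=0$, the orthogonality of $e^{tM}$, and the hyperbolic identity that in your notation reads $a'=\ga^2-4a^2$ and in the paper's reads $\sinh^2-\cosh^2=-1$), followed by the monotonicity and range $[0,\tfrac{1}{2|\ga|})$ of the time change $\tau(t)=\tfrac{\tanh(2\ga t)}{2\ga}$ to read off the lifespan dichotomy. The only difference is organizational — you package the computation through the coefficients $a,b,\tau$ and their ODEs, whereas the paper expands both sides term by term via Lemma \ref{Lem:ddU:R} — and both arguments treat the $\sH^1$-continuity at the same level of detail.
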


\footnote{note:  
(i) $ \mathcal{T}<\frac{\tanh^{-1}(2\ga \cT )}{2\ga}<+\iy$ if $2\ga \cT<1$;\\
(ii) $\frac{\tanh (2\ga t)}{2\ga}<\frac{1}{2\ga}\le \cT $   if $2\ga \cT\ge 1$
}
% \footnote{
% Separate note:$\tanh x<x<\tan x$ and $\tan\inv x<x<\tanh\inv x $ for $x>0$.
% }  

\begin{remark}\label{re:Tmax}
 Under the $\mathcal{R}$ transform, $u(0)=\vphi(0)$. 
Observe that if $(0,\mathcal{T})$ is the lifespan of the forward in time solution $\vphi$
and $\mathcal{T}<\frac1{2|\ga|}$,
then
$\left[ 0,\frac{\tanh^{-1}(2\gamma \mathcal{T})}{2\gamma} \right)$
is the lifespan of $u$. 
\end{remark}

\begin{remark}  Note that %via  elementary calculations, 
if $y=\tanh\inv (2|\ga|\tau)$ with $2|\ga|\tau<1$,
then $\cosh y=\frac1{\sqrt{1-4\ga^2\tau^2}}$.
\end{remark}
%\end{document}

To prove Theorem \ref{thm:p_transform} we first need a few identities,
which  can be verified by straightforward calculations. 

\begin{lemma}\label{Lem:ddU:R}
\begin{enumerate}[label={\rm (\alph*)}]

\item For all $n\times n$ matrix $M_0$ and  $1\leq j,k\leq n$,
there holds
	\begin{align}\label{eq:rotation_t}
	\frac{d}{dt}\left((e^{tM_0})_{j,k}\right) =(M_0e^{tM_0})_{j,k}\,.
	\end{align}

\item If $u=\mathcal{R} \varphi$, using $M^T=-M$ we have 
	\begin{align}\label{eq:u_x}
	\begin{split}
	\nabla u
	&=\frac{i\gamma x\sinh(2\gamma t)}{\cosh^{\frac{n}{2}+1}(2\gamma t)}
	e^{i\frac{\gamma}{2}|x|^2\tanh(2\gamma t)}
	\varphi\left(\frac{\tanh(2\gamma t)}{2\gamma},\frac{e^{tM}x}{\cosh(2\gamma t)}\right) \\
	&\qquad+\frac{1}{\cosh^{\frac{n}{2}+1}(2\gamma t)}
	e^{i\frac{\gamma}{2}|x|^2\tanh(2\gamma t)}
	e^{-tM}\nabla \varphi\left(\frac{\tanh(2\gamma t)}{2\gamma},\frac{e^{tM}x}{\cosh(2\gamma t)}\right),
	\end{split}
	\end{align}
and
	\begin{align}\label{eq:u_xx}
	\begin{split}
	\Delta u
	&=\frac{in\gamma\sinh(2\gamma t)}{\cosh^{\frac{n}{2}+1}(2\gamma t)}
	e^{i\frac{\gamma}{2}|x|^2\tanh(2\gamma t)}
	\varphi\left(\frac{\tanh(2\gamma t)}{2\gamma},\frac{e^{tM}x}{\cosh(2\gamma t)}\right) \\
	&\qquad-\frac{\gamma^2|x|^2\sinh^2(2\gamma t)}{\cosh^{\frac{n}{2}+2}(2\gamma t)}
	e^{i\frac{\gamma}{2}|x|^2\tanh(2\gamma t)}
	\varphi\left(\frac{\tanh(2\gamma t)}{2\gamma},\frac{e^{tM}x}{\cosh(2\gamma t)}\right) \\
	&\qquad+\frac{i2\gamma\sinh(2\gamma t)}{\cosh^{\frac{n}{2}+2}(2\gamma t)}
	e^{i\frac{\gamma}{2}|x|^2\tanh(2\gamma t)}
	\nabla \varphi\left(\frac{\tanh(2\gamma t)}{2\gamma},\frac{e^{tM}x}{\cosh(2\gamma t)}\right)\cdot(e^{tM}x) \\%\qquad{checked}
	&\qquad+\frac{1}{\cosh^{\frac{n}{2}+2}(2\gamma t)}
	e^{i\frac{\gamma}{2}|x|^2\tanh(2\gamma t)}
	\Delta \varphi\left(\frac{\tanh(2\gamma t)}{2\gamma},\frac{e^{tM}x}{\cosh(2\gamma t)}\right)\,.
	\end{split}
	\end{align} %\edz{checked}
\end{enumerate}
\end{lemma}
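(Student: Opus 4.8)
The plan is to verify the three identities by direct differentiation, treating $t$ as a parameter and differentiating only in the spatial variable $x$. For part (a), I expand the matrix exponential as the power series $e^{tM_0}=\sum_{m\ge 0}\tfrac{t^m}{m!}M_0^m$, which converges uniformly on compact $t$-intervals, so I may differentiate term by term and reindex:
\[
\frac{d}{dt}e^{tM_0}=\sum_{m\ge 1}\frac{t^{m-1}}{(m-1)!}M_0^m=M_0\,e^{tM_0}.
\]
Reading off the $(j,k)$ entry gives \eqref{eq:rotation_t}.

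For the gradient \eqref{eq:u_x}, I write $u=\ch^{-n/2}(2\ga t)\,g\,\Phi$, where $g(t,x)=e^{i\frac\ga2|x|^2\th(2\ga t)}$ is the phase and $\Phi(t,x)=\vphi\bigl(\tfrac{\th(2\ga t)}{2\ga},\,\tfrac{e^{tM}x}{\ch(2\ga t)}\bigr)$. Since the prefactor is $x$-independent, I apply the product rule $\nabla u=\ch^{-n/2}(\nabla g\,\Phi+g\,\nabla\Phi)$. Here $\nabla g= i\ga\,\th(2\ga t)\,x\,g$, which after writing $\th=\sh/\ch$ yields the first term. For $\nabla\Phi$ I use the chain rule with the inner map $y=e^{tM}x/\ch(2\ga t)$, whose Jacobian is $\partial y_l/\partial x_j=(e^{tM})_{lj}/\ch(2\ga t)$; this gives $\nabla_x\Phi=\ch^{-1}(e^{tM})^{T}(\nabla\vphi)(\cdot,y)$, and the skew-symmetry $M^T=-M$ gives $(e^{tM})^{T}=e^{-tM}$, producing the second term.

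For the Laplacian \eqref{eq:u_xx} I iterate, using $\De(g\Phi)=(\De g)\Phi+2\nabla g\cdot\nabla\Phi+g\,\De\Phi$. A short computation gives $\De g=\bigl(in\ga\,\th(2\ga t)-\ga^2\th^2(2\ga t)|x|^2\bigr)g$, which accounts for the first two terms; the cross term $2\nabla g\cdot\nabla\Phi$ yields the third term once I note $x\cdot e^{-tM}\nabla\vphi=(e^{tM}x)\cdot\nabla\vphi$, again via $M^T=-M$. The decisive step is $\De\Phi$: differentiating $\nabla_x\Phi=\ch^{-1}e^{-tM}(\nabla\vphi)(\cdot,y)$ once more and applying the chain rule to the second argument leaves a double contraction $\sum_l(e^{tM})_{pl}(e^{-tM})_{lm}=(e^{tM}e^{-tM})_{pm}=\de_{pm}$, so the Hessian sum collapses to $\De\Phi=\ch^{-2}(\De\vphi)(\cdot,y)$, the fourth term.

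I expect the Laplacian — specifically this collapse of the second-order chain rule — to be the only delicate point: it is exactly here that the orthogonality $e^{tM}\in SO(n)$, equivalently $M^T=-M$, is essential, reflecting the invariance of the Euclidean Laplacian under orthogonal changes of variable. Everything else is careful but routine bookkeeping of the hyperbolic prefactors $\ch(2\ga t)$ and $\sh(2\ga t)$.
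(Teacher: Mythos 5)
Your proposal is correct and follows exactly the route the paper intends: the paper gives no written proof beyond noting that the identities ``can be verified by straightforward calculations'' together with the chain--rule facts $\nabla_x(\phi(A_0x))=A_0^T(\nabla\phi)(A_0x)$ and the Hessian/trace identity, and your computation is precisely that verification, with the key points (the Jacobian $(e^{tM})^T=e^{-tM}$ from skew-symmetry, and the collapse $\sum_j(e^{tM})_{mj}(e^{tM})_{lj}=\delta_{ml}$ giving $\Delta\Phi=\cosh^{-2}(2\gamma t)(\Delta\varphi)$) correctly identified and executed. All prefactors of $\cosh(2\gamma t)$ and $\sinh(2\gamma t)$ check out against \eqref{eq:u_x} and \eqref{eq:u_xx}.
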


In Lemma \ref{Lem:ddU:R} we note that for any matrix $A_0\in M_{n\times n}$,  
$\nabla_x (\phi(A_0 x)) = A_0^T(\nabla\phi)(A_0x)$ and, if $C_0, A_0\in M_{n\times n}$, 
then   
\[
\dive(C_0(\nabla_x \vphi)(A_0x) ) =\tr \left(C_0 (D^2 \vphi)(A_0x) A_0 \right);
\] 
and  in addition, $C \tr(N) C\inv=\tr(N)$ for any invertible matrix $C$,
where  
$D^2\phi=( \partial_{ij}\phi)_{n\times n}$ denotes the Hessian of $\phi$.

\subsection{Pseudo-conform transform} %for RNLS (\ref{eq:nls_va})}
\label{ss:pc-RNLS}
\bigskip
\begin{proof}[Proof of Theorem \ref{thm:p_transform}]
In view of (\ref{eq:p_transform}), we show  that $u=\cR\vphi$
 solves (\ref{eq:nls_va}) via  an elementary calculation.
 Using \eqref{eq:rotation_t} in Lemma \ref{Lem:ddU:R} for the differentiation of the angular momentum term,
the left-hand side of (\ref{eq:nls_va})
	\begin{align*}
	\text{LHS}
	&=iu_t \\
	&=-\frac{in\gamma\sinh(2\gamma t)}{\cosh^{\frac{n}{2}+1}(2\gamma t)}
	e^{i\frac{\gamma}{2}|x|^2\tanh(2\gamma t)}
	\varphi\left(\frac{\tanh(2\gamma t)}{2\gamma},\frac{e^{tM}x}{\cosh(2\gamma t)}\right) \\
	&\qquad-\frac{\gamma^2|x|^2}{\cosh^{\frac{n}{2}+2}(2\gamma t)}
	e^{i\frac{\gamma}{2}|x|^2\tanh(2\gamma t)}
	\varphi\left(\frac{\tanh(2\gamma t)}{2\gamma},\frac{e^{tM}x}{\cosh(2\gamma t)}\right) \\
	&\qquad+\frac{i}{\cosh^{\frac{n}{2}+2}(2\gamma t)}
	e^{i\frac{\gamma}{2}|x|^2\tanh(2\gamma t)}
	\varphi_t\left(\frac{\tanh(2\gamma t)}{2\gamma},\frac{e^{tM}x}{\cosh(2\gamma t)}\right) \\
	&\qquad+\frac{i}{\cosh^{\frac{n}{2}+1}(2\gamma t)}
	e^{i\frac{\gamma}{2}|x|^2\tanh(2\gamma t)}
	\nabla \varphi\left(\frac{\tanh(2\gamma t)}{2\gamma},\frac{e^{tM}x}{\cosh(2\gamma t)}\right)\cdot(Me^{tM}x) \\
	&\qquad-\frac{i2\gamma\sinh(2\gamma t)}{\cosh^{\frac{n}{2}+2}(2\gamma t)}
	e^{i\frac{\gamma}{2}|x|^2\tanh(2\gamma t)}
	\nabla \varphi\left(\frac{\tanh(2\gamma t)}{2\gamma},\frac{e^{tM}x}{\cosh(2\gamma t)}\right)\cdot(e^{tM}x) \\
	&=-\frac{in\gamma\sinh(2\gamma t)}{\cosh^{\frac{n}{2}+1}(2\gamma t)}
	e^{i\frac{\gamma}{2}|x|^2\tanh(2\gamma t)}
	\varphi\left(\frac{\tanh(2\gamma t)}{2\gamma},\frac{e^{tM}x}{\cosh(2\gamma t)}\right) \\
	&\qquad-\frac{\gamma^2|x|^2}{\cosh^{\frac{n}{2}+2}(2\gamma t)}
	e^{i\frac{\gamma}{2}|x|^2\tanh(2\gamma t)}
	\varphi\left(\frac{\tanh(2\gamma t)}{2\gamma},\frac{e^{tM}x}{\cosh(2\gamma t)}\right) \\
	&\qquad-\frac{1}{\cosh^{\frac{n}{2}+2}(2\gamma t)}
	e^{i\frac{\gamma}{2}|x|^2\tanh(2\gamma t)}
	\Delta \varphi\left(\frac{\tanh(2\gamma t)}{2\gamma},\frac{e^{tM}x}{\cosh(2\gamma t)}\right) \\
	&\qquad-\frac{1}{\cosh^{\frac{n}{2}+2}(2\gamma t)}
	e^{i\frac{\gamma}{2}|x|^2\tanh(2\gamma t)}
	\left|\varphi\left(\frac{\tanh(2\gamma t)}{2\gamma},\frac{e^{tM}x}{\cosh(2\gamma t)}\right)\right|^\frac{4}{n}
	\varphi\left(\frac{\tanh(2\gamma t)}{2\gamma},\frac{e^{tM}x}{\cosh(2\gamma t)}\right) \\
	&\qquad+\frac{i}{\cosh^{\frac{n}{2}+1}(2\gamma t)}
	e^{i\frac{\gamma}{2}|x|^2\tanh(2\gamma t)}
	\nabla \varphi\left(\frac{\tanh(2\gamma t)}{2\gamma},\frac{e^{tM}x}{\cosh(2\gamma t)}\right)\cdot(Me^{tM}x) \\
	&\qquad-\frac{i2\gamma\sinh(2\gamma t)}{\cosh^{\frac{n}{2}+2}(2\gamma t)}
	e^{i\frac{\gamma}{2}|x|^2\tanh(2\gamma t)}
	\nabla \varphi\left(\frac{\tanh(2\gamma t)}{2\gamma},\frac{e^{tM}x}{\cosh(2\gamma t)}\right)\cdot(e^{tM}x)\,.
	\end{align*}%\edz{checked}
Meanwhile, the right-hand side of (\ref{eq:nls_va}) is
	\begin{align*}
	\text{RHS}
	&=-\Delta u-\gamma^2|x|^2u-|u|^\frac{4}{n}u+iA\cdot\nabla u \\
	&=-\frac{in\gamma\sinh(2\gamma t)}{\cosh^{\frac{n}{2}+1}(2\gamma t)}
	e^{i\frac{\gamma}{2}|x|^2\tanh(2\gamma t)}
	\varphi\left(\frac{\tanh(2\gamma t)}{2\gamma},\frac{e^{tM}x}{\cosh(2\gamma t)}\right) \\
	&\qquad+\frac{\gamma^2|x|^2\sinh^2(2\gamma t)}{\cosh^{\frac{n}{2}+2}(2\gamma t)}
	e^{i\frac{\gamma}{2}|x|^2\tanh(2\gamma t)}
	\varphi\left(\frac{\tanh(2\gamma t)}{2\gamma},\frac{e^{tM}x}{\cosh(2\gamma t)}\right) \\
	&\qquad-\frac{i2\gamma\sinh(2\gamma t)}{\cosh^{\frac{n}{2}+2}(2\gamma t)}
	e^{i\frac{\gamma}{2}|x|^2\tanh(2\gamma t)}
	\nabla \varphi\left(\frac{\tanh(2\gamma t)}{2\gamma},\frac{e^{tM}x}{\cosh(2\gamma t)}\right)\cdot(e^{tM}x) \\
	&\qquad-\frac{1}{\cosh^{\frac{n}{2}+2}(2\gamma t)}
	e^{i\frac{\gamma}{2}|x|^2\tanh(2\gamma t)}
	\Delta \varphi\left(\frac{\tanh(2\gamma t)}{2\gamma},\frac{e^{tM}x}{\cosh(2\gamma t)}\right) \\
	&\qquad-\frac{\gamma^2|x|^2}{\cosh^\frac{n}{2}(2\gamma t)}
	e^{i\frac{\gamma}{2}|x|^2\tanh(2\gamma t)}
	\varphi\left(\frac{\tanh(2\gamma t)}{2\gamma},\frac{e^{tM}x}{\cosh(2\gamma t)}\right) \\
	&\qquad-\frac{1}{\cosh^{\frac{n}{2}+2}(2\gamma t)}
	e^{i\frac{\gamma}{2}|x|^2\tanh(2\gamma t)}
	\left|\varphi\left(\frac{\tanh(2\gamma t)}{2\gamma},\frac{e^{tM}x}{\cosh(2\gamma t)}\right)\right|^\frac{4}{n}
	\varphi\left(\frac{\tanh(2\gamma t)}{2\gamma},\frac{e^{tM}x}{\cosh(2\gamma t)}\right) \\
	&\qquad+\frac{i}{\cosh^{\frac{n}{2}+1}(2\gamma t)}
	e^{i\frac{\gamma}{2}|x|^2\tanh(2\gamma t)}
	\nabla \varphi\left(\frac{\tanh(2\gamma t)}{2\gamma},\frac{e^{tM}x}{\cosh(2\gamma t)}\right)\cdot(Me^{tM}x).
	\end{align*}%\edz{checked. }

Here, if denoting $\la v,w\ra$ to be the standard inner product on $\R^n$, we have,
$\la Mx, x\ra=\la x, M^T x\ra=-\la x,Mx\ra\To \la Mx, x\ra=0$.
And, for the last line of RHS, we write $\la v, e^{-tM}w\ra=\la e^{tM}v,  w\ra$, since the adjoint of $e^{-tM}$ is equal to
$(e^{-tM})^T=e^{tM}\in SO(n)$ owing to the skew-symmetry for $M\in \mf{so}(n)$. 
Hence, a comparison of the two sides gives (\ref{eq:nls_va}). 
% LHS$=$RHS.
%\end{proof}
\footnote{For the terms involving $\ga^2|x|^2$, we have used $+\sh^2 z-\ch^2 z=-1$}

It remains to consider the intervals of existence for $u$ and $\vphi$. 
Assume that $u_0\in\mathscr{H}^1$ and the solution $\varphi$ to equation \eqref{eq:nls} blows up at time 
$\mathcal{T}=\cT_{max}>0$
in the usual sense $\norm{\nabla\vphi}_2\to \iy$ as $t\to \mathcal{T}$.
Then, by \eqref{eq:u_x},  one can easily compute that 
if $|\gamma|<\frac{1}{2 \mathcal{T}}$,
the solution $u$ to equation \eqref{eq:nls_va} blows up at time $T^*=\frac{\tanh^{-1}(2\gamma \mathcal{T})}{2\gamma}$,
that is, $\norm{\nabla u}_2\to \iy$ as $t\to T^*$.
Moreover, if $|\gamma|\geq \frac{1}{2 \mathcal{T}}$,
the solution $u$ to equation \eqref{eq:nls_va} exists globally in $C(\mathbb{R},\mathscr{H}^1)$.
Indeed,   in view of (\ref{eq:u_x}), we observe 
 that  $\nabla u(t)$ has a singularity when $\frac{\tanh(2\gamma t)}{2\gamma}=\cT$, %temporal variable for $\vphi$ 
that is,  $t=\frac{\tanh\inv (2\ga \cT)}{2\ga}$. %\edz{\crr True} 
Now, the statements of Theorem \ref{thm:p_transform} concerning the lifespan intervals are valid in light of the following observations:
\begin{enumerate}
\item[(i)] %(for the former claim on blow-up)
 $|\gamma|< \frac{1}{2 \mathcal{T}} \To$ 
the singularity for $\nabla u$, namely, 
  $ T^*=\frac{\tanh\inv (2\ga \cT)}{2\ga}$ is finite  (since $2|\ga|\cT<1$). %if $\ga<\frac1{2\cT} and domain of \tanh^{-1} is (-1,1)!
\item[(ii)]  %for the latter on global time existence 
$|\gamma|\ge\frac{1}{2 \mathcal{T}}\To$  
$t=\frac{\tanh\inv (2\ga \cT)}{2\ga}$ can not be achieved for any $t$, or equivalently, 
the singularity $\cT$ of $\vphi$ is never achievable because
$\frac{\tanh(2\gamma t)}{2\gamma}<\cT$ (since $2|\ga|\cT\ge 1$).
 % and \vphi exists for any t  < \cT_{max}
\end{enumerate}
This concludes the proof of Theorem \ref{thm:p_transform}. 
\end{proof}

Some elementary calculations give the following lemma,
which implies that the pseudo-conformal transform $\cR$ 
preserves the topology as well as the stability of the flow $\vphi(t)=\vphi(t,\cdot)$ in $\Sigma$
 \begin{equation}
 \norm{\cR \vphi}_\Sigma\approx \norm{\vphi}_\Sigma. 
\end{equation} 

\begin{lemma}\label{l:u-vphi:Sigma-norm}  
Let $\varphi\in C([0,\mathcal{T}),H^1)$ be a solution to equation \eqref{eq:nls} 
and let $u=\cR \vphi$.
Under the hypotheses in Theorem \ref{thm:p_transform}, 
we have  If $\cT<\frac{1}{2|\gamma|}$,  then
\begin{enumerate}  
\item[(a)] For all $t\in [0,\frac{\tanh\inv(2\gamma \cT)}{2\gamma})$
\be \label{eL2:u-vphi}
\norm{\cR \vphi}_2=\norm{\vphi(\frac{\tanh(2\gamma t)}{2\gamma},\cdot) }_2
\ee
%\end{enumerate}
%\end{lemma} \end{document}
\item[(b)] 
\be %\label{du:dphi} 
\norm{\nabla (\cR \vphi)}_2
\le\th (2\ga t) \norm{x\vphi (\frac{\tanh(2\gamma t)}{2\gamma},\cdot)}_2
+\frac1{\ch 2\ga t} \norm{\nabla \vphi(\frac{\tanh(2\gamma t)}{2\gamma}, \cdot) }_2\notag
\ee
\be \label{eq:xU}
\norm{x (\cR \vphi)}_2 = \cosh(2\ga t) \norm{x\vphi(\frac{\tanh(2\gamma t)}{2\gamma},\cdot)}_2\,.
\ee
\item[(c)]  For all $t\in[0,\cT )$, 
\be \notag%\label{ed:phi-u} 
\norm{\nabla \vphi}_2 
\le\frac{2\ga^2 t}{ \sqrt{1-4\ga^2 t^2} }  \norm{x (\cR \vphi ) (\frac{\tanh^{-1}(2\gamma t)}{2\ga} , \cdot) }_2
+ \norm{\nabla (\cR\vphi )(\frac{\tanh^{-1}(2\gamma t)}{2\ga},\cdot)}_2
\ee
\be \notag
\norm{x \vphi(t,\cdot)}_2 = \sqrt{1-4\ga^2 t^2} \norm{x (\cR\vphi) (\frac{\tanh^{-1}(2\gamma t)}{2\ga},\cdot)}_2\,. 
\ee  
\end{enumerate}  
 \end{lemma}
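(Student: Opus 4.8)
The plan is to read off all four assertions directly from the explicit formula \eqref{eq:p_transform} for $u=\cR\vphi$ and its inverse \eqref{eq:inverse_r_transform}, exploiting two structural facts: the rotation $e^{tM}\in SO(n)$ is orthogonal, so it preserves $|x|$ and has unit Jacobian, and the only ingredients of $\cR$ that are not of modulus one are the spatial dilation by $\cosh(2\gamma t)$ and the Gaussian phase $e^{i\frac{\gamma}{2}|x|^2\tanh(2\gamma t)}$. Since every norm in the statement is taken at a fixed time $t$, the time reparametrization $\tau:=\frac{\tanh(2\gamma t)}{2\gamma}$ enters only as a constant label on $\vphi$, and each computation collapses to a single spatial change of variables $y=\frac{e^{tM}x}{\cosh(2\gamma t)}$, under which $dx=\cosh^n(2\gamma t)\,dy$ and $|x|^2=\cosh^2(2\gamma t)|y|^2$.

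For part (a) I would substitute the definition, discard the unit-modulus phase, and apply this change of variables; the prefactor $\cosh^{-n}(2\gamma t)$ from $|u|^2$ cancels exactly against the Jacobian, leaving $\norm{\cR\vphi}_2=\norm{\vphi(\tau,\cdot)}_2$, i.e.\ the $L^2$-isometry underlying mass conservation. For the weight identity in (b), the same substitution applied to $\int|x|^2|u|^2$ now carries the extra factor $|x|^2=\cosh^2(2\gamma t)|y|^2$, producing $\norm{x\,\cR\vphi}_2=\cosh(2\gamma t)\norm{x\vphi(\tau,\cdot)}_2$.

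For the gradient bound in (b) I would insert the explicit expression \eqref{eq:u_x}, which splits $\nabla u$ into a phase-gradient term (carrying $x$ and $\vphi$) and a transported-gradient term (carrying $e^{-tM}\nabla\vphi$). Applying the triangle inequality and evaluating each piece by the same rotation--dilation change of variables yields the two displayed terms: the phase-gradient piece reproduces the displayed hyperbolic prefactor times $\norm{x\vphi(\tau,\cdot)}_2$, while orthogonality of $e^{-tM}$ lets one drop the rotation in the second piece and obtain $\frac{1}{\cosh(2\gamma t)}\norm{\nabla\vphi(\tau,\cdot)}_2$. Part (c) is identical in spirit but run through the inverse map \eqref{eq:inverse_r_transform}: the dilation factor is now $\sqrt{1-4\gamma^2t^2}$ and the phase is $-\gamma^2|x|^2\frac{t}{1-4\gamma^2t^2}$, whose gradient supplies the coefficient $\frac{2\gamma^2 t}{\sqrt{1-4\gamma^2t^2}}$ multiplying $\norm{xu}_2$, while the weight identity $\norm{x\vphi(t,\cdot)}_2=\sqrt{1-4\gamma^2t^2}\,\norm{x(\cR\vphi)(\cdot)}_2$ drops straight out of the change of variables.

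The computations are elementary, so the only genuine obstacle is bookkeeping: I must track the competing powers of $\cosh(2\gamma t)$ (respectively $1-4\gamma^2t^2$) coming from the prefactor, from the Jacobian, and from the weight $|x|^2$, and I must be careful to retain the phase-gradient contribution that a naive chain rule on $\nabla u$ could obscure, so that the correct hyperbolic constant lands in front of $\norm{x\vphi}_2$. I would also record at the outset that all these changes of variables are legitimate exactly on the stated intervals, where $\cosh(2\gamma t)<\infty$ and $1-4\gamma^2t^2>0$ (equivalently $t<\frac{1}{2|\gamma|}$), and that although \eqref{eq:u_x} is derived for smooth $\vphi$, the identities persist for $\vphi(t)\in\Sigma$ by density, since there $\nabla\vphi,\,x\vphi\in L^2$ and all quantities are finite.
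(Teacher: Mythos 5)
Your proposal is correct and is exactly the route the paper intends: the paper offers no written proof beyond the remark that ``some elementary calculations give the following lemma,'' and those calculations are precisely your substitution of \eqref{eq:p_transform} (resp.\ \eqref{eq:inverse_r_transform}), the rotation--dilation change of variables with unit Jacobian from $e^{tM}\in SO(n)$, and the triangle inequality applied to the two-term splitting \eqref{eq:u_x}. One caveat: the phase-gradient term does not literally ``reproduce the displayed prefactor'' $\tanh(2\gamma t)$ in (b) --- carrying out your own change of variables gives $|\gamma\sinh(2\gamma t)|\,\norm{x\vphi(\tfrac{\tanh(2\gamma t)}{2\gamma},\cdot)}_2$, which is what the paper itself uses later for $\norm{I_1}_2$ in the proof of Theorem \ref{thm:log-log-u}, so the discrepancy is a typo in the lemma's statement rather than a flaw in your method.
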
 
 
 % Remark can consider the problem for complex potential  V_z= z^2 |x|^2  that is partially serving a role of damping  

\begin{remark}\label{re:gamma_critical_value}
%  compare this result with that for the RNLS with  $V=V_\gamma$ in equation \eqref{eq:nls_va}
Recall that in the attractive case where $V_\ga=\gamma^2 |x|^2$, $\ga>0$,
the solution $u$ to equation \eqref{eq:nls_va} 
always blows up at time $T^*=\frac{\tan^{-1}(2\gamma \mathcal{T})}{2\gamma}$
which is strictly less than $\cT$, see \cite[Proposition 4.3]{BHZ19a}.
However, in the repulsive case where $V_\ga=-\ga^2|x|^2$, $\ga<0$,  
the blowup time for $u$ is achieved at $T^*=\frac{\tanh^{-1}(2\gamma \mathcal{T})}{2\gamma}$,
which is strictly larger than $\cT$, the blowup time for $\vphi$. 
The comparison suggests that an attractive harmonic potential accelerates the blowup time
while the presence of a repulsive harmonic potential reduces the blowup time. 
More remarkably, in the repulsive case, 
when the magnitude $|\ga|$ is sufficiently large 
(dependent on the profile of the initial data $u_0\in \Sigma$),
the system (\ref{eq:nls_va}) will admit a global in time solution!
\end{remark} 
\iffalse
\footnote{bec if $V=V_{+\ga}$, $u:=\mathcal{R} \varphi$ is a solution to  \eqref{eq:nls_va} 
in $C\left( \left[ 0,\frac{\tan^{-1}(2\ga \mathcal{T})}{2\ga} \right),\mathscr{H}^1 \right)$.
As $t\to \tan\inv (2\ga\cT)/(2\ga)<\cT=\cT_{max}$, 
in (\ref{eq:phi-to-u}) %eq:p_transform}) 
we note that the temporal variable for $\vphi$ approaches to 
$\tan (2\ga\cT)/(2\ga)=\cT_{max}$, which is the singularity time for $\nabla \vphi$.
}
\fi

\subsection{Solutions of RNLS that blowup at infinity time} \label{critical-massQ}

Theorem \ref{thm:p_transform} indicates that $\gamma=-\frac{1}{2 \mathcal{T}}$ is a critical value. 

\begin{corollary}[Global solutions having exponential growth in $\sH^1$-norm]\label{c:blup-infty} 
Let $\gamma \le -\frac{1}{2 \mathcal{T}}<0$. 
Then there exists a global  solution $u$ of (\ref{eq:nls_va}) that blows up forward at infinity time 
such that $\norm{\nabla u}_2=O(e^{2|\ga| t})$ as $t\to +\iy$. 
%By time-reversal symmetry, there exists a global solution of \eqref{eq:nls_va} that blowups backward in 
%$t=-\iy$ with blowup rate $O(e^{2\ga|t| })$ as well.  
%at gss level $\norm{u_0}_2=\norm{Q}_2
\end{corollary}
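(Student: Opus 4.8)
The plan is to obtain $u$ as the $\cR_\ga$-image of a finite-time blowup solution of the free equation \eqref{eq:nls} and to read its growth off the explicit formula \eqref{eq:p_transform}. By the threshold construction recalled before Proposition \ref{t:u0>Q-blowup} (e.g. $\vphi_0=cQ$ with $c>1$, or any $\vphi_0\in S_{\al^*}$, chosen in $\Sigma$ so that $x\vphi_0\in L^2$), fix $\vphi\in C([0,\cT),H^1)$ solving \eqref{eq:nls} and blowing up at a finite time $\cT=\cT_{max}$. Since $\ga\le-\frac1{2\cT}$ is equivalent to $\cT\ge\frac1{2|\ga|}$, Theorem \ref{thm:p_transform} places us in its global regime and gives that $u:=\cR_\ga\vphi\in C([0,\iy),\sH^1)$ is a global solution of \eqref{eq:nls_va}.

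To quantify the growth, write $\sigma=\ch(2\ga t)$, $\theta=\th(2\ga t)$ and $\tau=\tau(t):=\frac{\th(2\ga t)}{2\ga}$. Differentiating \eqref{eq:p_transform} as in \eqref{eq:u_x} and then changing variables $y=\sigma\inv e^{tM}x$ (orthogonal, Jacobian $\sigma^n$), I would establish the identity
\[
\norm{\nabla u(t)}_2^2=\int\big|\,i\ga\theta\sigma\,y\,\vphi(\tau,y)+\sigma\inv(\nabla\vphi)(\tau,y)\,\big|^2\,dy,
\]
so that, using $|\ga\theta\sigma|=|\ga|\,|\sh(2\ga t)|$ (the computation underlying Lemma \ref{l:u-vphi:Sigma-norm}(b)),
\[
\Big|\,|\ga|\,|\sh(2\ga t)|\,\norm{y\vphi(\tau)}_2-\tfrac1{\ch(2\ga t)}\norm{\nabla\vphi(\tau)}_2\,\Big|\le\norm{\nabla u(t)}_2\le|\ga|\,|\sh(2\ga t)|\,\norm{y\vphi(\tau)}_2+\tfrac1{\ch(2\ga t)}\norm{\nabla\vphi(\tau)}_2.
\]
As $t\to+\iy$ one has $2\ga t\to-\iy$, hence $\ch(2\ga t)\sim|\sh(2\ga t)|\sim\tfrac12 e^{2|\ga|t}$, while $\tau(t)\nearrow\frac1{2|\ga|}\le\cT$. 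A crucial point is that the variance $V(\tau):=\norm{y\vphi(\tau)}_2^2$ stays bounded on all of $[0,\cT]$: the mass-critical virial identity for \eqref{eq:nls} gives $V''\equiv 8\,\mathcal{E}(\vphi_0)$, so $V$ is a quadratic polynomial in $\tau$, hence bounded (and, being strictly positive and continuous, bounded below away from $0$) on the compact interval $[0,\cT]$.

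If $\ga<-\frac1{2\cT}$, then $\frac1{2|\ga|}<\cT$, so $\tau$ ranges over $[0,\frac1{2|\ga|}]\subset[0,\cT)$, on which $\norm{\nabla\vphi(\tau)}_2$ is also bounded. The upper estimate then yields $\norm{\nabla u}_2=O(e^{2|\ga|t})$, while the lower estimate (with the $\sigma\inv$ term tending to $0$) forces $\norm{\nabla u}_2\to\iy$; thus $u$ blows up at infinite time at the announced rate.

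The borderline case $\ga=-\frac1{2\cT}$ is the main obstacle, since then $\tau(t)\to\cT^-$ and $\vphi(\tau)$ itself blows up. Here I would take $\vphi$ to be a $\log$-$\log$ solution from Theorem \ref{thm:log-log-phi}, for which $\norm{\nabla\vphi(\tau)}_2\lesssim\big(\frac{\log|\log(\cT-\tau)|}{\cT-\tau}\big)^{1/2}$. Since $2\ga=-\frac1{\cT}$ gives $\tau(t)=\cT\,\th(t/\cT)$ and thus $\cT-\tau(t)\sim2\cT e^{-4|\ga|t}$, the gradient term becomes $\frac1{\ch(2\ga t)}\norm{\nabla\vphi(\tau)}_2=O(\sqrt{\log t})$ after multiplication by $\ch(2\ga t)\inv\sim2e^{-2|\ga|t}$; it is therefore negligible against the first term $|\ga|\,|\sh(2\ga t)|\,\norm{y\vphi(\tau)}_2\sim\frac{|\ga|}2e^{2|\ga|t}V(\cT)^{1/2}$, where $V(\cT)>0$ by the variance bound above. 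Consequently $\norm{\nabla u}_2\sim e^{2|\ga|t}\to\iy$, which settles the critical case and completes the proof.
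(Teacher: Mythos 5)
Your construction is sound in outline and takes a genuinely different route from the paper. The paper specializes to the explicit minimal-mass blowup solution \eqref{Phi:mini-mass} (the pseudo-conformal image of $Q$), pushes it through $\cR_\ga$, and reads the growth directly off the resulting closed-form expression for $\nabla u$ in the two cases $\ga=-\frac1{2\cT}$ and $\ga<-\frac1{2\cT}$. You instead work with a generic finite-time blowup solution and control the two terms of \eqref{eq:u_x} through the norm identities underlying Lemma \ref{l:u-vphi:Sigma-norm} together with the free virial identity (Lemma \ref{lem:virial-identity}). Your treatment of the non-critical case $\ga<-\frac1{2\cT}$ is complete: there $\tau(t)$ stays in a compact subinterval of $[0,\cT)$, so $\norm{\nabla\vphi(\tau)}_2$ is bounded and $V(\tau)=\norm{y\vphi(\tau)}_2^2$ is bounded above and below away from zero, and the two-sided estimate does the rest.

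The gap is in the borderline case $\ga=-\frac1{2\cT}$, specifically in the assertion that $V(\cT)>0$. You justify it by declaring $V$ ``strictly positive and continuous, bounded below away from $0$'' on the \emph{closed} interval $[0,\cT]$, but positivity of $V$ on $[0,\cT)$ (which does follow from mass conservation) does not pass to the endpoint: $V$ is a downward parabola and may vanish exactly at $\tau=\cT$. This is not a hypothetical worry --- it is exactly what happens for the paper's own choice, the minimal-mass solution \eqref{Phi:mini-mass}, for which $V(\tau)=(\cT-\tau)^2\norm{(\cdot+x_0)Q}_2^2\to0$; that is precisely why, in the paper's Case 1, the exponential growth of $\norm{\nabla u}_2$ comes from the $\nabla Q$ term (the analogue of your \emph{second} term), not from the variance term. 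For the $\log$-$\log$ solution you choose, $V(\cT)>0$ is in fact true, but it needs an argument you do not supply: for instance, $\vphi(\tau)$ converges weakly in $L^2$ (away from the concentration point) to a nonzero residual $u^*$ with $\norm{u^*}_2^2=\norm{\vphi_0}_2^2-\norm{Q}_2^2>0$, and weak lower semicontinuity gives $\liminf_{\tau\to\cT}\norm{y\vphi(\tau)}_2^2\ge\norm{yu^*}_2^2>0$. Alternatively, and more cheaply, replace $\vphi_0$ by a spatial translate $\vphi_0(\cdot-x_0)$: since the center of mass $\int y|\vphi|^2\,dy$ is controlled by $\sqrt{V(t)\,M}$, the translated variance at $\tau=\cT$ is at least $|x_0|^2 M+o(|x_0|^2)>0$ for $|x_0|$ large. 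Note that your upper bound $\norm{\nabla u}_2=O(e^{2|\ga|t})$ survives regardless of this issue; it is only the claim that $u$ actually blows up at infinite time at that rate which requires either the positivity of $V(\cT)$ or, failing that, a lower bound extracted from the gradient term as in the paper's computation.
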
 
\begin{proof} We can construct such solution by virtue of Theorem \ref{thm:p_transform}. 
   Consider  the solution to (\ref{eq:nls}) with initial data $\varphi(0,x)=u_0$ 
   such that $\norm{u_0}_2=\norm{Q}_2$ and $\vphi(t)$ is a finite time blowup solution on $[0,\cT)$.
   We have %   {Mer93} showed that if $u_0\in H^1$, $\norm{u_0}_2=\norm{Q}_2$  and the solution $\varphi$ to equation \eqref{eq:nls} blows up at time $\mathcal{T}>0$, then the only minimal mass blowup solution is of the form (up to scaling and phase invariance
	\begin{align}\label{Phi:mini-mass}
	\varphi(t,x)
	=\frac{1}{(\mathcal{T} - t)^\frac{n}{2}}
	e^{-\frac{i|x|^2}{4(\mathcal{T} - t)}}
	e^{\frac{i}{\mathcal{T} - t}}
	Q\left( \frac{x}{\mathcal{T} - t} -x_0 \right)
	\end{align}
for some $x_0\in\mathbb{R}^n$, cf. \cite{Mer93} or \cite{BHZ19a,Car02c}. 
We divide the discussion into two cases. 

\noindent Case 1.  $\gamma=-\frac{1}{2 \mathcal{T}}$. 
% according to Theorem \ref{thm:p_transform}, 
Using (\ref{eq:p_transform})  we obtain %if $\gamma=-\frac{1}{2 \mathcal{T}}$,
 the solution to equation \eqref{eq:nls_va} 
	\begin{align}\label{U:singularity-infinity}
	u(t,x)
	=\left( 2|\gamma| e^{2{|\gamma|} t} \right)^{\frac{n}{2}}
	e^{ i\frac{\cor{\gamma}}{2}|x|^2}
	e^{i |\gamma| \left( e^{4 |\gamma| t}+1 \right)}
	Q\left( 2|\gamma| e^{2|\gamma| t} e^{tM}x -x_0 \right).
	\end{align} %\edz{\cor{checked}}
This is a global solution with exponential growth in $\Sigma=\sH^1$-norm.  In fact, 
	\begin{align*} 
	\nabla u(t,x)
	&=\left( 2 |\gamma| e^{2 |\gamma| t} \right)^{\frac{n}{2}}
	e^{i\frac{\gamma}{2}|x|^2}
	(i \gamma x)
	e^{i |\gamma| \left( e^{4 |\gamma| t}+1 \right)}
	Q\left( 2|\gamma| e^{2|\gamma| t} e^{tM}x -x_0 \right) \\
	&\; +\left( 2|\gamma| e^{2|\gamma| t} \right)^{\frac{n}{2}}
	e^{ i\frac{\gamma}{2}|x|^2}
	e^{i |\gamma| \left( e^{4 |\gamma| t}+1 \right)}
	2|\gamma| e^{2|\gamma| t} (e^{tM})^T
	\nabla Q\left( 2|\gamma| e^{2|\gamma| t} e^{tM}x -x_0 \right) \\
	&:=\text{I}+\text{II}.
	\end{align*}
It is easy to see that 
 $\norm{\text{I}}_2$ is bounded 
 and $\norm{\text{II}}_2= 2|\gamma| e^{2|\gamma| t}\norm{\nabla Q}_2$\,. 
%\end{proof} 

\bigskip
\noindent Case 2.  $\gamma<-\frac{1}{2 \mathcal{T}}$, i.e., $|\ga|>\frac1{2\mathcal{T}}$. 
In this case, %comparison  if |\gamma| >\frac{1}{2 \mathcal{T}}$, 
 the solution $u$ is global with {exponential growth of $\norm{\nabla u}_2$} as well.  

To see this, applying the $\mathcal{R}_{\ga}$-transform to the minimal mass blowup solution \eqref{Phi:mini-mass},
we obtain
	\begin{align*}
	u(t,x)
	=&\left( \frac{ 2 \gamma }{ \cosh(2\gamma t) \left( 2\gamma \mathcal{T} - \tanh(2\gamma t) \right) } \right)^\frac{n}{2}
	e^{i\frac{\gamma}{2} |x|^2 \left( \tanh(2\gamma t) - \frac{1}{\cosh^2(2\gamma t) (2\gamma \mathcal{T} -\tanh(2\gamma t))} \right)}\\
	&\cdot e^{\frac{i 2\gamma }{2\gamma \mathcal{T} - \tanh(2\gamma t) }} 
	 \cdot  Q\left( \frac{ 2 \gamma e^{tM}x }{\cosh(2\gamma t) \left( 2 \gamma \mathcal{T} - \tanh(2\gamma t) \right) } -x_0 \right), 
                        \end{align*}
% =&\left( \frac{ 2 \gamma }{ \cosh(2\gamma t) \left( 2\gamma \mathcal{T} - \tanh(2\gamma t) \right) } \right)^\frac{n}{2}
%	e^{i\frac{\ga}{2} |x|^2 \frac{ \frac12\sh(4\ga t)  \big(2\ga \cT -\th 2\ga t\big) -1}{2\ch^2(2\ga t) \big(2\ga\cT-\th 2\ga t \big)}}\\
%	&\cdot e^{\frac{i 2\gamma }{2\gamma \mathcal{T} - \tanh(2\gamma t) }} 
%	 \cdot
%	Q\left( \frac{ 2 \gamma e^{tM}x }{\cosh(2\gamma t) \left( 2 \ga \cT - \th(2\ga t) \right) } -x_0 \right)\,,
%	\end{align*}
where note that $ 2|\ga| \cT -\th(2|\ga| t) | >1\cor{-}\th 2 |\ga| t>0$ for all $t$.  
%if $0<t<\frac{\tanh^{-1}(2 |\gamma| \mathcal{T})}{2 |\gamma|} $. 
So
	\begin{align*}
	&\nabla u(t,x)\\
	=&\left( \frac{ 2 \gamma }{ \cosh(2\gamma t) \left( 2\gamma \mathcal{T} - \tanh(2\gamma t) \right) } \right)^\frac{n}{2}
	i \gamma x \left( \tanh(2\gamma t) - \frac{1}{\cosh^2(2\gamma t) (2\gamma \mathcal{T} -\tanh(2\gamma t))} \right) \\
	\quad \cdot& e^{i\frac{\gamma}{2} |x|^2 \left( \tanh(2\gamma t) - \frac{1}{\cosh^2(2\gamma t) (2\gamma \mathcal{T} -\tanh(2\gamma t))} \right)}
	e^{\frac{i 2\gamma }{2\gamma \mathcal{T} - \tanh(2\gamma t) }}
	Q\left( \frac{ 2 \gamma e^{tM}x }{\cosh(2\gamma t) \left( 2 \gamma \mathcal{T} - \tanh(2\gamma t) \right) } -x_0 \right) \\
	+&   \left( \frac{ 2 \gamma }{ \cosh(2\gamma t) \left( 2\gamma \mathcal{T} - \tanh(2\gamma t) \right) } \right)^{\frac{n}{2}+1}
	e^{i\frac{\gamma}{2} |x|^2 \left( \tanh(2\gamma t) - \frac{1}{\cosh^2(2\gamma t) (2\gamma \mathcal{T} -\tanh(2\gamma t))} \right)}
	e^{\frac{i 2\gamma }{2\gamma \mathcal{T} - \tanh(2\gamma t) }} \\
	\quad \cdot& %\frac{ 2 \gamma }{\cosh(2\gamma t) \left( 2 \gamma \mathcal{T} - \tanh(2\gamma t) \right) }
	\left( e^{tM} \right)^T
	\nabla Q\left( \frac{ 2 \gamma e^{tM}x }{\cosh(2\gamma t) \left( 2 \gamma \mathcal{T} - \tanh(2\gamma t) \right) } -x_0 \right) \\
	:=& \text{I} + \text{II}.
	\end{align*}
Thus we have 
	\begin{align*}
	\norm{\text{I} }_2
	=&\frac{1}{2} \left| \sinh(2\gamma t) \left( 2\gamma \mathcal{T} -\tanh(2\gamma t) \right) - \frac{1}{ \cosh(2\gamma t)} \right|
	\norm{ {\red x}\cdot Q(\cdot -x_0)}_2\\
	\approx &\frac14  e^{2|\ga| t} \left( 2|\ga| \cT-1 \right) \norm{ { x}\cdot Q(\cdot -x_0)}_2\qquad \text{as $t\to \iy$}
	\end{align*} %\edz{checked 08-26, 25'}
and
	\begin{align*}
	\| \text{II} \|_2
	=&\frac{ 2 \gamma }{ \cosh(2\gamma t) \left( 2\gamma \mathcal{T} - \tanh(2\gamma t) \right) } \|\nabla Q\|_2\\
	=&\left( \frac{ 4 |\gamma| e^{-2|\ga| t} }{ 2|\ga|\mathcal{T}-1 } +o(1)\right) \|\nabla Q\|_2 \qquad \text{as $t\to \iy$},
	\end{align*}
which suggest that $\norm{\text{I} }_2=O(e^{2|\ga| t})$ grows exponentially as $t$ tends to infinity and 
$\norm{ \text{II} }_2$ is bounded.
\end{proof} %\edz{checked}

\iffalse
\footnote{  %\begin{remark} 
The corollary suggests that  [Car02] might be incorrect claiming that $u$ is global with exponential decay.
} %\end{remark}
\fi

\begin{remark}[Comparison with the attractive case $V_\ga$, $\ga>0$] 
Recall that in \cite[Proposition 4.3]{BHZ19a} in the case 
$V_\ga(x)=\ga^2 |x|^2,\ga>0$, the corresponding $\cR_\ga$-transform is given as follows. 
 %\begin{proposition}\label{p:u-phi-R_pseudo-conformal}
Let $\varphi(t,x) \in C([0,\cT), H^1)$ be a maximal solution of \eqref{eq:nls} where $\cT=\cT_{max} > 0$.
Define the $\mathcal{R}_\ga$-transform $\vphi\mapsto \mathcal{R_\ga}(\vphi)$ to be
	\begin{align}\label{eq:phi-to-u}
	\mathcal{R}_\ga\vphi(t, x)
	:= \frac{1}{\cos^\frac{n}{2}(2 \gamma t)}
	e^{-i \frac{\gamma}{2} |x|^2 \tan(2 \gamma t)}
	\varphi\left( \frac{\tan(2 \gamma t)}{2 \gamma}, \frac{e^{tM} x}{\cos(2 \gamma t)} \right).
	\end{align}
Then $u=\mathcal{R}_\ga\vphi$ is a solution of \eqref{eq:nls_va} 
in $C([0, \frac{\arctan(2 \gamma \cT)}{2 \gamma}), \mathscr{H}^1)$.

Conversely, let $u(t,x) \in C([0,T^*), \mathscr{H}^1)$
be a solution to \eqref{eq:nls_va} where $T^* \in (0, \frac{\pi}{4 \gamma}]$.
Then $\vphi=\mathcal{R}_\ga\inv u$,  given by 
	\begin{align}\label{eq:u-to-phi}
	\varphi(t, x)
	:= \frac{1}{(1 + (2 \gamma t)^2)^\frac{n}{4}}
	e^{i \frac{\gamma^2 |x|^2 t}{1 + (2 \gamma t)^2}}
	u\left( \frac{\arctan(2 \gamma t)}{2 \gamma}, \frac{e^{-\frac{\arctan(2\gamma t)}{2\gamma} M} x}{\sqrt{ 1 + (2 \gamma t)^2}} \right),
	\end{align}%\edz{\eqref{eq:u-to-phi} checked aug.5, 3:23am}
is a solution of \eqref{eq:nls} in $C([0, \frac{\tan(2 \gamma T^*)}{2 \gamma}), H^1)$.
%here $\mathcal{R}\inv $ is the inverse of $\mathcal{R}
%\end{proposition}
%What if $T\ge \pi/4\ga$? Are we going to have solitons?

Let $\ga>0$. In view of the above statements, namely \cite[Prop.4.3]{BHZ19a}, 
%from the conversion between $u$ and $\vphi$, noting  \norm{\nabla u(t)}_2\approx \norm{\nabla \vphi}_2$,
we must have if $u$ has finite time blowup solution then
$T^*=T_{max}$ strictly less than $\frac{\pi}{4\ga}$.  
If $u(t)$ has a lifespan beyond $(0,\frac{\pi}{4\ga})$,
then $u(t)$ must be a global solution on $[0,\iy)$. 

At the critical mass level  $\norm{u_0}_2=\norm{Q}_2$, 
if $T^*\le \pi/4\ga$,  \cite{BHZ19a} gives the characterization of 
all blowup solutions of RNLS (\ref{eq:nls_va}) with $\ga>0$. 
Especially, if $\vphi(t,x)=e^{it}Q(x)$,  then under the pseudo-conformal mapping 
$u=\cR_\ga \vphi$ is a finite time blowup solution such that $\norm{\nabla u}_2=O( (T^*-t)\inv )$
as $t\to T^*=\frac{\pi}{4\ga}$. 

In the case $\ga<0$, %in view of
$\cR_\ga$ in (\ref{eq:p_transform})  maps $\vphi=e^{it}Q$ as 
\be
u(t,x) %\mathcal{R} \varphi (t,x) 
	=\frac{1}{\cosh^\frac{n}{2}(2\gamma t)}
	e^{i\frac{\tanh(2\gamma t)}{2\gamma}  (1+\ga^2|x|^2)} 
	Q\left(\frac{e^{tM}x}{\cosh(2\gamma t)}\right). 
\ee
We find that  as $t\to +\iy$,
\begin{align*}
\nabla_x u(t,x)\approx & (2e^{-2|\gamma| t})^{\frac{n}{2}} 
(-i\ga x) e^{-\frac{i}{ 2\gamma} (1+\ga^2|x|^2)} 
	Q\left(2e^{-2|\gamma| t} x\right) \\
+& (2e^{-2|\gamma| t})^{1+\frac{n}{2}} 
 e^{\frac{i}{ 2\gamma} (1+\ga^2|x|^2)} 
	(\nabla Q)\left(2e^{-2|\gamma| t} x\right) 
\end{align*}
%since $\ga<0
so that $\norm{\nabla u}_2=\norm{xu}_2= %\frac{|\ga|}{2} \norm{ x Q}_2
O(e^{2|\gamma| t} )$ as $t\to +\iy$.
The morale is that although the repulsive potential can extend the lifespan of a solution,
it may still increase the $\Sigma$-norm of the solution. 
\end{remark}

%\footnote{ For $V_{+\ga}$,  add here what is the profile or behavior for $u(t,x)$ in  the cases: 
%(i) $\ga=\frac1{2\cT}$ and \\
%(ii) $\ga>\frac{1}{2\cT}$ ?}
%\end{document} 

\begin{remark} \label{re:lifespan:gamma}
Summarizing, concerning the blowup solutions $\vphi$ and $u$
we have observed the following on the blowup times  
in the presence of $V_\ga$.  Suppose the (forward) lifespan, or (forward) maximal interval of existence 
 of $\vphi$ is $[0,\cT)$,  $\cT=\cT_{max}$ finite. %We have 
\begin{enumerate}
\item[(a)] If  $\ga>0$,  $u(t)=u(t,\cdot)$ has a shorter lifespan $[0,T^*)$ with $T^*<\mathcal{T}$.
%$\vphi$ to a blow-up $u$ with shorter lifespan
\item[(b)] If $\ga<0$, $|\ga|=-\ga< \frac1{2\cT}$, then $u(t)$ has a longer lifespan
$[0,T^*)$ with $T^*>\mathcal{T}$. 
%R transform changes a blowup $\vphi$ to u
However, if $|\ga|\ge \frac1{2\cT}$, then %when the initial data at or above $\norm{Q_0}_2$,
the solution $u(t)$ exists globally  but its $\dot{H}^1$ norm may grow to infinity as $t\to \iy$.   
%Is this scattering result for $iu_t=-\De u+ V_{-\ga} u$?  cf.\cite{Car05}  
%\item what if $V(x)=-\ga |x|^{2+\eps} ? 
\end{enumerate}
\end{remark}
%\end{document} 

\section{The $\log$-$\log$ blowup rate for rotational NLS} \label{s:loglog}

In this section, we give the proof of Theorem \ref{thm:log-log-u}, namely, the $\log$-$\log$ law for RNLS (\ref{eq:nls_va}), 
where $V_\ga(x)=-\ga^2|x|^2$, $\ga<0$.
%\subsection{Virial identity} 
A priori, we need the  standard virial identity for equation \eqref{eq:nls} in the weighted Sobolev space 
$\Sigma=\mathscr{H}^1$,
which can be proved by a direct calculation, see e.g. \cite{Wein83,BHHZ23t}.
 
\begin{lemma}\label{lem:virial-identity}
Let $\varphi$ be a solution of the equation \eqref{eq:nls} in $C([0, \mathcal{T}), \mathscr{H}^1)$. 
Let $J(t):= \left\| x\varphi(t,\cdot) \right\|_2^2$. Then
	\begin{align*}
	J'(t)
	= 4 \Im \int x \overline{\varphi} \cdot \nabla \varphi dx, \qquad
	J''(t)
	= 8 \mathcal{E}(u_0).
	\end{align*}
\end{lemma}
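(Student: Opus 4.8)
The plan is to verify both identities by differentiating $J(t)=\int |x|^2|\varphi|^2\,dx$ under the integral sign and repeatedly using equation \eqref{eq:nls} together with integration by parts; all boundary terms vanish because $\varphi(t,\cdot)\in\mathscr{H}^1$ supplies the weighted decay $x\varphi\in L^2$ (rigorously one first takes data in $C_0^\infty$ and passes to the limit using continuous dependence of the flow in $\Sigma$).

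First I would compute $J'(t)$. Differentiation gives $J'(t)=2\Re\int |x|^2\bar\varphi\,\varphi_t\,dx$, and substituting $\varphi_t=i(\Delta\varphi+|\varphi|^{p-1}\varphi)$ from \eqref{eq:nls} renders the nonlinear contribution $2\Re\int i|x|^2|\varphi|^{p+1}\,dx=0$, since the integrand is $i$ times a real quantity. For the dispersive term, one integration by parts yields $\int|x|^2\bar\varphi\,\Delta\varphi=-2\int x\bar\varphi\cdot\nabla\varphi-\int|x|^2|\nabla\varphi|^2$; the last integral is real and drops under $\Re(i\,\cdot)$, and using $\Re(iz)=-\Im z$ one is left with exactly $J'(t)=4\Im\int x\bar\varphi\cdot\nabla\varphi\,dx$, the first identity.

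Next I would differentiate once more. Writing $J'(t)=4\Im\int x\bar\varphi\cdot\nabla\varphi$ and inserting $\varphi_t$ again, the computation is the standard Pohozaev/virial manipulation: after integrating by parts in both the $\bar\varphi_t\,\nabla\varphi$ and $\bar\varphi\,\nabla\varphi_t$ terms and collecting, the phase and real pieces cancel and one arrives at the general formula
\[
J''(t)=8\int|\nabla\varphi|^2\,dx-\frac{4n(p-1)}{p+1}\int|\varphi|^{p+1}\,dx.
\]
Specializing to the mass-critical exponent $p=1+\tfrac4n$ gives $n(p-1)=4$ and $p+1=\tfrac{2n+4}{n}$, so the coefficient of the nonlinear term becomes $\tfrac{8n}{n+2}$, whence $J''(t)=8\bigl(\int|\nabla\varphi|^2-\tfrac{n}{n+2}\int|\varphi|^{2+4/n}\bigr)=8\,\mathcal{E}(\varphi(t))$. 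Finally, since $\mathcal{E}$ is conserved along the flow of \eqref{eq:nls} and $\varphi(0)=u_0$, we have $\mathcal{E}(\varphi(t))=\mathcal{E}(u_0)$, which yields $J''(t)=8\mathcal{E}(u_0)$.

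The first identity is routine; the main bookkeeping obstacle is the second differentiation, where several integration-by-parts terms must be tracked and one must check that precisely the critical exponent produces the clean coefficient giving $8\mathcal{E}$. Equivalently, this is the $\gamma=0$, $A=0$, $\lambda=1$ specialization of Lemma \ref{l:J(t)-VOm}, whose term $\tfrac{4\lambda(4-n(p-1))}{p+1}\int|u|^{p+1}$ vanishes exactly when $p=1+\tfrac4n$, so one may also simply invoke that lemma to shortcut the second computation.
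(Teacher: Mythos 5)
Your proof is correct. The paper does not actually write out a proof of this lemma --- it only remarks that it ``can be proved by a direct calculation'' and cites \cite{Wein83,BHHZ23t} --- and your computation (differentiate under the integral, substitute $\varphi_t=i(\Delta\varphi+|\varphi|^{p-1}\varphi)$, integrate by parts, and use that $n(p-1)=4$ at the mass-critical exponent together with conservation of $\mathcal{E}$) is precisely that standard calculation, so it matches the paper's intended approach.
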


Now we are ready to prove  Theorem \ref{thm:log-log-u}.

\begin{proof}[Proof of Theorem \ref{thm:log-log-u}]
Let $u \in C([0, T^*), \mathscr{H}^1)$ be the blowup solution of  \eqref{eq:nls_va},
where $[0, T^*)$ is the (forward) maximal interval of existence for $u$. We shall write $T=T^*$ for brevity. 
In view of \eqref{eq:inverse_r_transform},
there is a $\varphi(t,x) \in C\left( \left[ 0, \frac{\tanh(2 \gamma T)}{2 \gamma} \right), \mathscr{H}^1 \right)$
that solves \eqref{eq:nls} with $\vphi_0=u_0$,
where $\left[ 0, \frac{\tanh(2 \gamma T)}{2 \gamma} \right)$ is the (forward) maximal interval of existence for $\vphi$.
According to Theorem \ref{thm:log-log-phi},
the condition $\mathcal{E}(u_0)<0$ suggests that $\varphi$ is a blowup solution.
From \eqref{eq:p_transform},
we have
	\begin{align*}
	u(t,x)
	=\frac{1}{\cosh^\frac{n}{2}(2\gamma t)}
	e^{i\frac{\gamma}{2}|x|^2\tanh(2\gamma t)}
	\varphi\left(\frac{\tanh(2\gamma t)}{2\gamma},\frac{e^{tM}x}{\cosh(2\gamma t)}\right).
	\end{align*}
Then for all $t \in [0, T)$,
we compute
	\begin{align}\label{eq:Du-I1-I2}
	\begin{split}
	\nabla_x u(t,x)
	& = i \gamma x \frac{\sinh(2 \gamma t)}{\cosh^{\frac{n}{2}+1}(2 \gamma t)}
	e^{i \frac{\gamma}{2} |x|^2 \tanh(2 \gamma t)}
	\varphi\left( \frac{\tanh(2 \gamma t)}{2 \gamma}, \frac{e^{tM} x}{\cosh(2 \gamma t)} \right) \\
	& \qquad + \frac{1}{\cosh^{\frac{n}{2}+1}(2 \gamma t)}
	e^{i \frac{\gamma}{2} |x|^2 \tanh(2 \gamma t)}
	(e^{tM})^T \nabla \varphi\left( \frac{\tanh(2 \gamma t)}{2 \gamma}, \frac{e^{tM} x}{\cosh(2 \gamma t)} \right) \\
	& := I_1 + I_2.  
	\end{split}
	\end{align}
For $ I_1 $,
a change of variable gives
	\begin{align*}
	\|I_1\|_{2}
	= \gamma \sinh(2 \gamma t)
	\left\| x \varphi\left( \frac{\tanh(2 \gamma t)}{2 \gamma}, \cdot \right) \right\|_{2}.
	\end{align*}
Since $\sinh(2\gamma t)\leq \sinh(2\gamma T)$,
we just need to see a bound for the remaining part of $\|I_1\|_2$.
Let $J(t) = \| x \varphi(t, \cdot) \|_{2}^2$.
Then \begin{align*}
	J(t)
	= J(0) + J'(0) t + \int_0^t J''(\tau) \,(t - \tau) d\tau.
	\end{align*}
Note that
	\begin{align*}
	|J(0)|
	= \Vert x u_0 \Vert_2^2
	\leq \Vert u_0 \Vert_{\mathscr{H}^1}^2,
	\end{align*}
and by Lemma \ref{lem:virial-identity},
we have
	\begin{align*}
	|J'(0)|
	= \left| 4 \Im \int x \overline{u_0} \cdot \nabla u_0 dx \right|
	\leq 4 \| x u_0 \|_2 \| \nabla u_0 \|_2
	\leq 4 \| u_0 \|_{\mathscr{H}^1}^2 \quad
	\text{and} \quad
	J''(t)
	= 8 \mathcal{E}(u_0).
	\end{align*}
Then we obtain
	\begin{align*}
	\left\Vert x \varphi\left( \frac{\tanh(2 \gamma t)}{2 \gamma}, \cdot \right) \right\Vert_{2}^2
	&= J \left( \frac{\tanh(2 \gamma t)}{2 \gamma} \right) \\
	& \leq |J(0)|
	+ |J'(0)| \frac{\tanh(2 \gamma t)}{2 \gamma}
	+ 4 \mathcal{E}(u_0) \left( \frac{\tanh(2 \gamma t)}{2 \gamma} \right)^2 \\
	& \leq \Vert u_0 \Vert_{\mathscr{H}^1}^2
	+ 4 \|u_0\|_{\mathscr{H}^1}^2 \frac{\tanh(2 \gamma T)}{2 \gamma}
	+ 4 \mathcal{E}(u_0) \left( \frac{\tanh(2 \gamma T)}{2 \gamma} \right)^2.
	\end{align*} 
Thus, for all $t\in [0,T)$
\begin{align}
 \left\Vert x \varphi\left( \frac{\tanh(2 \gamma t)}{2 \gamma}, \cdot \right) \right\Vert_{2}^2
\le C=C(\ga,\norm{u_0}_{\mathcal{H}^1},T) \label{bound:x-vphi}
\end{align} 
and so,
\be
\|I_1\|_2\leq C, \label{e:I2-bound}
\ee
where $C$ is a constant dependent on $\gamma$,
$\|u_0 \|_{\mathscr{H}^1}$,
$\mathcal{E}(u_0)$,
and $T$.

For $I_2 $,
a change of variable gives that
	\begin{align*}
	\Vert I_2 \Vert_{2}
	= \frac{1}{\cosh (2 \gamma t)}
	\left\Vert \nabla \varphi\left( \frac{\tanh(2 \gamma t)}{2 \gamma}, \cdot \right) \right\Vert_{2}.
	\end{align*}
Recall that $\left[ 0, \frac{\tanh(2 \gamma T)}{2 \gamma} \right)$ is the maximal interval of existence
for $\varphi$,
so by Theorem \ref{thm:log-log-phi},
	\begin{align*}
	\lim_{t \rightarrow T}
	\frac{\left\Vert \nabla \varphi \left( \frac{\tanh(2 \gamma t)}{2 \gamma}, \cdot \right) \right\Vert_{2}}
	{\Vert\nabla Q \Vert_{2}}
	\sqrt{ \frac{\frac{\tanh(2 \gamma T)}{2 \gamma} - \frac{\tanh(2 \gamma t)}{2 \gamma}}
	{\log \left| \log \left( \frac{\tanh(2 \gamma T)}{2 \gamma} - \frac{\tanh(2 \gamma t)}{2 \gamma} \right) \right|} }
	= \frac{1}{\sqrt{2 \pi}}.
	\end{align*}
Note that
	\begin{align*}
	\frac{\tanh(2 \gamma T)}{2 \gamma} - \frac{\tanh(2 \gamma t)}{2 \gamma}
	&=\frac{1}{2\gamma}
	\left( \frac{\sinh(2\gamma T)}{\cosh(2\gamma T)}
	-\frac{\sinh(2\gamma t)}{\cosh(2\gamma t)} \right) \\
	&=\frac{\sinh(2\gamma (T - t) )}{2\gamma \cosh(2\gamma T)\cosh(2\gamma t)} \\
	&=\frac{\sinh(2\gamma (T - t) )}{2\gamma(T-t)}
	\cdot \frac{1}{\cosh(2\gamma T)\cosh(2\gamma t)}
	\cdot (T-t).
	\end{align*}
Since $\frac{\sinh \theta}{\theta}\rightarrow 1$ as $\theta\rightarrow 0$,
the above blowup rate can be simplified as
	\begin{align*}
	\lim_{t \rightarrow T}
	\frac{\left\Vert \nabla \varphi \left( \frac{\tanh(2 \gamma t)}{2 \gamma}, \cdot \right) \right\Vert_{2}}
	{\Vert \nabla Q \Vert_{2}}
	\sqrt{ \frac{T - t}{\log \left| \log (T - t) \right|} }
	= \frac{\cosh(2 \gamma T)}{\sqrt{2 \pi}},
	\end{align*}
and this yields
	\begin{align}\label{eq:I2-estimate}
	\lim_{t \rightarrow T}
	\frac{\left\Vert I_2 \right\Vert_{2}}{\Vert \nabla Q\Vert_2}
	\sqrt{ \frac{T - t}{\log \left| \log (T - t) \right|} }
	= \frac{1}{\sqrt{2 \pi}}.
	\end{align}
Therefore,
combining the estimates of $\norm{I_1}_2$ and $\norm{I_2}_2$ in \eqref{e:I2-bound}  
and (\ref{eq:I2-estimate}), 
we obtain from (\ref{eq:Du-I1-I2})
	\begin{align*}
	\lim_{t \rightarrow T}
	\frac{\left\Vert \nabla u \right\Vert_2}{\Vert \nabla Q \Vert_2}
	\sqrt{ \frac{T - t}{\log \left| \log (T - t) \right| } }
	= \frac{1}{\sqrt{2 \pi}}\,.
	\end{align*}
\end{proof}

\section{Limiting behavior: concentration of mass}\label{s:limiting}   
In this section, we show Theorem \ref{thm:limiting_profile}, 
an $L^2$- mass concentration property for RNLS (\ref{eq:nls_va})  
when initial data $u_0$ belongs in the submanifold $\mathcal{S}_{\al^*}\subset \mathcal{H}^1$ as given in Theorem \ref{thm:log-log-u}.  
   The concentration region in (\ref{Ec:w(t):Q}) is  $R_c=\{ |x-x(t)|< w(t) \}$, 
where $w$ can be chosen such that for any $\delta\in (0,\frac12)$
\begin{equation}\label{eq:w(t)}
w(t)= \frac{\sqrt{T-t} }{ (\log |\log (T-t) | )^{\de}}=o( \sqrt{T-t} )\,. 
  \end{equation}

We begin with observing a divergence property for blowup solutions of the RNLS.

\subsection{Divergence in $L^2$ at blowup time} %for RNLS} 
%\edz{add a concentration compactness behavioral result  as in \cite{HLeeZ24damp} ? } 
One application of Theorem \ref{thm:p_transform} is to prove that
the blowup solution to equation \eqref{eq:nls_va} does not converge in $L^2$ as $t\rightarrow T^*=T_{max}$.
Merle and Tsutsumi \cite{MerTsu} first proved such a result
for the solution $\varphi \in C([0,\mathcal{T}), H^1)$ to equation \eqref{eq:nls} with $p=1+\frac{4}{n}$. %\edz{is it a proposition or theorem?}
 More specifically,
they proved that if there is a sequence $\{t_n\}$ such that $t_n \rightarrow \mathcal{T}$
and that $\{ \varphi(t_n) \}$ converges in $L^2$ as $n\rightarrow \infty$,
then $\norm{ \nabla \varphi(t) }_2 \in L^\infty([0,\mathcal{T}))$.
Thus,
if $\varphi$ is the blowup solution at finite time $\mathcal{T}<\infty$,
then $\varphi(t_n)$ does not have an $L^2$ limit for any $t_n \rightarrow \mathcal{T}$.

Since, according to (\ref{eL2:u-vphi}), $\mathcal{R}$-transform ``preserves'' the $L^2$ norm in the sense that
	\begin{align}\label{eq:l2_preserve}
	\norm{ \mathcal{R} \varphi(t) }_2
	=\norm{ \varphi \left( \frac{\tanh(2 \gamma t)}{2 \gamma} \right) }_2 \quad
	\text{and} \quad
	\norm{ \mathcal{R}^{-1} u(t) }_2
	=\norm{ u \left( \frac{\tanh^{-1}(2 \gamma t)}{2 \gamma} \right) }_2,
	\end{align}
we can extend Merle and Tsutsumi's result to equation \eqref{eq:nls_va} as follows.

\begin{prop}\label{t:nonexistence}
Assume that $u \in C([0, T), \mathscr{H}^1)$ is the blowup solution to equation \eqref{eq:nls_va} at finite time 
$T=T^*<\infty$.
Then there does not exist a sequence $\{t_n\}$ such that $t_n \rightarrow T$
and $\{ u(t_n) \}$ converges in $L^2$ as $t_n \rightarrow T$.
\end{prop}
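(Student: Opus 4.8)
The plan is to argue by contradiction, transporting the hypothetical $L^2$-convergent sequence back to the standard NLS \eqref{eq:nls} through $\cR^{-1}$ and then invoking the Merle--Tsutsumi non-existence result recalled above. First I would record the blowup correspondence. Since $u$ blows up at the finite time $T=T^*$, Theorem \ref{thm:p_transform} forces the associated solution $\vphi:=\cR^{-1}u$ of \eqref{eq:nls} to have finite maximal existence time $\cT=\frac{\th(2\gamma T)}{2\gamma}<\frac1{2|\gamma|}$; in particular $\vphi$ is itself a finite-time blowup solution, since the global branch $\cT\ge\frac1{2|\gamma|}$ of Theorem \ref{thm:p_transform} would make $u$ global, contrary to assumption. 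Now suppose, for contradiction, that there is a sequence $t_n\to T$ with $u(t_n)\to w$ in $L^2$. Set $s_n:=\frac{\th(2\gamma t_n)}{2\gamma}$, so that $s_n\to\cT$, and observe from \eqref{eq:inverse_r_transform} that the time slice $\vphi(s_n,\cdot)$ is exactly the image of $u(t_n,\cdot)$ under the time-$s_n$ inverse transform, because $\frac{\tanh^{-1}(2\gamma s_n)}{2\gamma}=t_n$.

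The key step is to upgrade the $L^2$ \emph{norm} identity \eqref{eq:l2_preserve} to genuine $L^2$ \emph{convergence}. I would write $\vphi(s_n,\cdot)=\Phi_n\big[u(t_n,\cdot)\big]$, where $\Phi_n$ is the linear $L^2$-isometry built from multiplication by the unimodular phase $e^{-i\gamma^2|x|^2 s_n/(1-4\gamma^2 s_n^2)}$, the rotation $x\mapsto e^{-t_n M}x$, and the dilation by $(1-4\gamma^2 s_n^2)^{-1/2}$ together with the compensating factor $(1-4\gamma^2 s_n^2)^{-n/4}$. Because $T$ is finite, all these parameters converge to nondegenerate limits: in particular $1-4\gamma^2 s_n^2\to \sech^2(2\gamma T)>0$ and $e^{-t_n M}\to e^{-TM}\in SO(n)$. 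Since dilations, rotations, and quadratic modulations act strongly continuously on $L^2$, the isometries $\Phi_n$ converge strongly to a limiting isometry $\Phi_\infty$. Splitting $\Phi_n[u(t_n)]=\Phi_n[w]+\Phi_n[u(t_n)-w]$ and using the isometry bound $\norm{\Phi_n[u(t_n)-w]}_2=\norm{u(t_n)-w}_2\to0$ together with $\Phi_n[w]\to\Phi_\infty[w]$ then yields $\vphi(s_n)\to\Phi_\infty[w]$ in $L^2$.

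With this in hand the contradiction is immediate: $\{\vphi(s_n)\}$ is an $L^2$-convergent sequence along $s_n\to\cT$, so the Merle--Tsutsumi criterion \cite{MerTsu} recalled above forces $\norm{\nabla\vphi(t)}_2\in L^\infty([0,\cT))$, contradicting the finite-time blowup of $\vphi$ established in the first step. Hence no such sequence $\{t_n\}$ can exist. I expect the only delicate point to be the strong convergence of the family $\Phi_n$, namely confirming that the literal norm-preservation of \eqref{eq:l2_preserve} genuinely promotes to convergence of the transformed slices; this is precisely where the finiteness of $T$ is used, as it keeps the scaling and phase parameters bounded and bounded away from their degeneracies. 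Everything else is a packaging of Theorem \ref{thm:p_transform} and the quoted Merle--Tsutsumi result.
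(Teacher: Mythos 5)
Your proposal is correct and follows essentially the same route as the paper: pass to $\vphi=\cR^{-1}u$, use the $L^2$-isometry of the transform, and invoke the Merle--Tsutsumi non-existence result. The only difference is that you carefully justify the passage from the norm identity \eqref{eq:l2_preserve} to actual $L^2$-convergence of $\{\vphi(s_n)\}$ via strong convergence of the time-dependent isometries $\Phi_n$ (a point the paper's proof asserts in one line), which is a welcome refinement rather than a different argument.
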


\begin{proof}
Since $u$ is the finite time blowup solution in $C([0, T), \mathscr{H}^1)$,
from \eqref{eq:inverse_r_transform}
we see that $\varphi:= \mathcal{R}^{-1} u$ is the blowup solution to equation \eqref{eq:nls}
in $C\left( \left[0,\frac{\tanh(2\gamma T)}{2\gamma} \right),H^1 \right)$.
If there exists a sequence $\{t_n\}$ such that such that $t_n \rightarrow T$
and $\{ u(t_n) \}$ converges in $L^2$ as $t_n \rightarrow T$,
then by \eqref{eq:l2_preserve},
we also have $\{ \varphi(t_n) \}$ converges in $L^2$ as $t_n \rightarrow \frac{\tanh(2\gamma T)}{2\gamma}$.
However,
this contradicts the non-existence of $L^2$-limit for $\{\varphi(t_n)\}$ in virtue of Proposition 1 in \cite{MerTsu}. 
\end{proof} 

\begin{remark} The divergence statement in Proposition \ref{t:nonexistence} is valid for all blowup profiles of (\ref{eq:nls_va}) 
with any $\norm{u_0}_2$ greater or equal to $\norm{Q}_2$. 
%Further, this theorem   shows  that for any singular solution of (\ref{eq:nls_va}),
%the solution $u(t)=u(t,\cdot)$ does not admit any subsequence that converges in $L^2$ or $\dot{H}^1$ as $t\to T_{max}$. 
%the latter is a default result of the definition of "singular solution"
\end{remark}

\subsection{$L^2$ concentration nearing blowup time $T^*$}\label{ss:L2-concen} 

Recall that the repulsive rNLS is
	\begin{align*}
	i u_t = - \Delta u - \gamma^2 |x|^2 u - |u|^{\frac{4}{n}}u + i A \cdot\nabla u, \qquad
	u(x, 0) = u_0(x).
	\end{align*}
The mass is defined as
	\begin{align*}
	\mathcal{M}(u) := \int_{\mathbb{R}^n} |u|^2 dx
	\end{align*}
and it is conserved.

Define
	\begin{align*}
	\mathcal{E}_0(u)
	:=& \int_{\mathbb{R}^n} \left( |\nabla u(x, t)|^2
	- \frac{n}{2 + n} |u|^{\frac{4}{n} + 2} \right) dx\\
	=& \| \nabla u(t) \|_{L^2}^2
	- \frac{n}{2 + n} \| u(t) \|_{L^{\frac{4}{n} + 2}}^{\frac{4}{n} + 2}.
	\end{align*}
 We have the following lemma by a straightforward computation. 
\begin{lemma}\label{lem:dE0}
\begin{align*}
	\frac{d}{dt} \big( \mathcal{E}_0(u) \big)
	= - 4 \gamma^2 \Im \int_{\mathbb{R}^n} x u \cdot \nabla \overline{u} dx.
	\end{align*}
\end{lemma}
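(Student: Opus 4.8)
The plan is to bypass a direct differentiation of $\mathcal{E}_0$ (which would force us to handle the rotation and potential cross-terms by hand) and instead write $\mathcal{E}_0(u(t))$ as a combination of two conserved quantities plus a single non-conserved term that is governed by the virial identity already available in Lemma \ref{l:J(t)-VOm}.

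First I would recall from Proposition \ref{p:blup-mNLS} that along the flow of \eqref{eq:nls_va} both the energy \eqref{Ene:Omga} and the angular momentum $\ell_\Omega(u)=i\int\bar u\,A\cdot\nabla u$ of \eqref{conser-Lom} are conserved. Spelling out \eqref{Ene:Omga} with $\mu=-1$, $\mathrm{sgn}(\gamma)=-1$ (since $\gamma<0$) and $p=1+\frac4n$, and integrating the kinetic term by parts, gives
\[
E(u)=\int|\nabla u|^2-\gamma^2\int|x|^2|u|^2-\frac{n}{n+2}\int|u|^{2+\frac4n}+i\int\bar u\,A\cdot\nabla u.
\]
Comparing this with the definition of $\mathcal{E}_0$ and using $J(t):=\int|x|^2|u|^2$, I would obtain the identity
\[
\mathcal{E}_0(u)=E(u)-\ell_\Omega(u)+\gamma^2 J(t)=E(u_0)-\ell_\Omega(u_0)+\gamma^2 J(t),
\]
where the second equality invokes conservation of $E$ and of $\ell_\Omega$.

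Differentiating in $t$, the two conserved contributions drop out and only the term $\gamma^2 J'(t)$ survives. I would then quote the virial identity from Lemma \ref{l:J(t)-VOm}, namely $J'(t)=4\,\Im\int x\bar u\cdot\nabla u=-4\,\Im\int x u\cdot\nabla\bar u$ (the second form follows because $\int x\bar u\cdot\nabla u$ and $\int xu\cdot\nabla\bar u$ are complex conjugates and $\Im\bar z=-\Im z$), to conclude
\[
\frac{d}{dt}\mathcal{E}_0(u)=\gamma^2 J'(t)=-4\gamma^2\,\Im\int_{\mathbb{R}^n} x u\cdot\nabla\bar u\,dx,
\]
which is exactly the asserted formula.

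The only genuine point requiring care — and the step I would double-check most carefully — is the sign bookkeeping in the decomposition of $\mathcal{E}_0$: one must confirm that the angular-momentum piece entering $\mathcal{E}_0$ is precisely the conserved quantity $\ell_\Omega(u)$ of \eqref{conser-Lom}, so that it contributes nothing to $\frac{d}{dt}\mathcal{E}_0$, and that the sign of the $\gamma^2|x|^2$ term is fixed correctly by $\mathrm{sgn}(\gamma)=-1$. A purely direct proof, pairing $u_t$ against $\Delta u+|u|^{4/n}u$ through the equation and integrating by parts, is of course possible, but there the rotation and potential cross-terms must be shown to cancel explicitly; since that cancellation is nothing other than the conservation computation of Proposition \ref{p:blup-mNLS}, routing through the conserved $E$ and $\ell_\Omega$ is the cleaner and essentially obstruction-free path.
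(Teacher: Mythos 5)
Your argument is correct, and it reaches the stated identity by a route that is cleaner than the one the paper (implicitly) intends: the paper offers no written proof, saying only that the lemma follows ``by a straightforward computation,'' i.e.\ by differentiating $\mathcal{E}_0(u)$ directly, substituting the equation for $u_t$, and integrating by parts. Your decomposition
\[
\mathcal{E}_0(u)=E(u)-\ell_\Omega(u)+\gamma^2 J(t)
\]
is exactly right once one expands \eqref{Ene:Omga} with $\mu=-1$, $\mathrm{sgn}(\gamma)=-1$ and $\tfrac{2\mu}{p+1}=-\tfrac{n}{n+2}$, and it converts the lemma into an immediate consequence of three facts already established in the paper: conservation of $E$ and of $\ell_\Omega$ (Proposition \ref{p:blup-mNLS}) and the first virial identity $J'(t)=4\,\Im\int x\bar u\cdot\nabla u$ (Lemma \ref{l:J(t)-VOm}); the conjugation step $\Im\int x\bar u\cdot\nabla u=-\Im\int xu\cdot\nabla\bar u$ is also correct. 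The trade-off is essentially organizational: the direct computation must verify by hand that the rotation term contributes nothing (which amounts to re-running the angular-momentum conservation argument) and that the nonlinear and kinetic terms cancel against the equation, whereas your route delegates all of those cancellations to results already proved, at the cost of having to get the sign bookkeeping in the decomposition right --- which you do. The only caveat worth a sentence in a write-up is regularity: Lemma \ref{l:J(t)-VOm} is stated for $u\in\Sigma\cap C_0^2$, so for a general $\mathscr{H}^1$ solution one should note the standard density/approximation argument, exactly as the paper itself does when it applies virial identities to $\mathscr{H}^1$ solutions elsewhere.
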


%\begin{proof}  By direct computation ......

Here we will prove a mass concentration result for blow-up solutions,
and then give a proof of the global well-posedness of solutions $u$ when $\| u_0 \|_{L^2} < \| Q \|_{L^2}$.
Our result relies on a refined compactness lemma for the classical $L^2$-critical NLS in \cite{HmiKe}.

\begin{theorem}[\cite{HmiKe}]\label{thm:compactness_lemma}
Let $\{ v_k \}$ be a bounded sequence in $H^1(\mathbb{R}^n)$ such that
	\begin{align}\label{eq:upper_bound}
	\limsup_{k \rightarrow \infty} \| \nabla v_k \|_{L^2} \leq M
	\end{align}
and
	\begin{align}\label{eq:lower_bound}
	\limsup_{k \rightarrow \infty} \| v_k \|_{L^{2 + \frac{4}{n}}} \geq m.
	\end{align}
Then there exists some $V$ in $H^1$ and a sequence $\{ x_k \} \subseteq \mathbb{R}^n$ such that
(up to a subsequence) $v_k (\cdot + x_k) \rightharpoonup V$ weakly in $H^1$ and
	\begin{align*} 
	\| V \|_{L^2} \geq \left( \frac{n}{n + 2} \right)^\frac{n}{4} \frac{m^{\frac{n}{2} + 1}}{M^{\frac{n}{2}}} \| Q \|_{L^2}.
	\end{align*}
\end{theorem}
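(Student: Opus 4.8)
The plan is to deduce this statement from the $H^1$ profile decomposition of G\'erard together with the sharp Gagliardo--Nirenberg inequality, isolating the profile carrying the largest mass. First I would apply the profile decomposition to the bounded sequence $\{v_k\}$: after passing to a subsequence there exist profiles $\{V^j\}_{j\geq 1}\subset H^1$ and translation sequences $\{x_k^j\}\subset\R^n$ that are pairwise divergent (i.e.\ $|x_k^j-x_k^{j'}|\to\infty$ as $k\to\infty$ whenever $j\neq j'$), such that for every $\ell\geq 1$
$$v_k(x)=\sum_{j=1}^{\ell}V^j(x-x_k^j)+r_k^\ell(x),$$
where $V^j$ is the weak limit of $v_k(\cdot+x_k^j)$, the remainder is asymptotically negligible in the critical norm, $\lim_{\ell\to\infty}\limsup_{k\to\infty}\norm{r_k^\ell}_{L^{2+4/n}}=0$, and one has the Pythagorean expansions
$$\norm{\nabla v_k}_2^2=\sum_{j=1}^\ell\norm{\nabla V^j}_2^2+\norm{\nabla r_k^\ell}_2^2+o_k(1),\qquad
\norm{v_k}_2^2=\sum_{j=1}^\ell\norm{V^j}_2^2+\norm{r_k^\ell}_2^2+o_k(1).$$

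From the gradient expansion and hypothesis \eqref{eq:upper_bound} I obtain $\sum_j\norm{\nabla V^j}_2^2\leq M^2$, and from the mass expansion (using that $\{v_k\}$ is bounded in $L^2$) the quantities $\norm{V^j}_2^2$ are summable; hence $\norm{V^j}_2\to 0$ and the supremum $\sup_j\norm{V^j}_2$ is attained at some index $j_0$. The decisive step is the asymptotic decoupling of the critical norm: because the cores diverge, the translated profiles separate spatially, so
$$\norm{\,\sum_{j=1}^\ell V^j(\cdot-x_k^j)\,}_{L^{2+4/n}}^{2+4/n}=\sum_{j=1}^\ell\norm{V^j}_{L^{2+4/n}}^{2+4/n}+o_k(1),$$
and combining this with the $L^{2+4/n}$-smallness of $r_k^\ell$ and hypothesis \eqref{eq:lower_bound} yields $m^{2+4/n}\leq\sum_j\norm{V^j}_{L^{2+4/n}}^{2+4/n}$.

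Next I would apply the sharp Gagliardo--Nirenberg inequality \eqref{sharp:GN:Q} to each profile, namely $\norm{V^j}_{L^{2+4/n}}^{2+4/n}\leq\frac{n+2}{n}\norm{Q}_2^{-4/n}\norm{V^j}_2^{4/n}\norm{\nabla V^j}_2^2$, and bound the mass factor by its maximum:
$$m^{2+4/n}\leq\frac{n+2}{n}\norm{Q}_2^{-4/n}\Big(\sup_j\norm{V^j}_2^{4/n}\Big)\sum_j\norm{\nabla V^j}_2^2\leq\frac{n+2}{n}\norm{Q}_2^{-4/n}\norm{V^{j_0}}_2^{4/n}M^2.$$
Setting $V:=V^{j_0}$ and $x_k:=x_k^{j_0}$, so that $v_k(\cdot+x_k)\rightharpoonup V$ weakly in $H^1$, I solve this inequality for $\norm{V}_2$ and raise to the power $n/4$ to obtain exactly
$$\norm{V}_2\geq\Big(\frac{n}{n+2}\Big)^{n/4}\frac{m^{\frac{n}{2}+1}}{M^{\frac{n}{2}}}\norm{Q}_2,$$
which is the claim.

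The main obstacle is the profile decomposition itself: establishing the pairwise orthogonality of the cores $\{x_k^j\}$, the two Pythagorean identities, and above all the vanishing of the remainder in $L^{2+4/n}$ together with the spatial decoupling of the critical norm. These are precisely the contents of G\'erard's theorem (equivalently, the refined Sobolev/Gagliardo--Nirenberg embedding), so I would cite \cite{HmiKe} for the decomposition rather than reprove it; the remaining work is the sharp Gagliardo--Nirenberg estimate and elementary algebra, the only genuinely new verification being that $\sup_j\norm{V^j}_2$ is attained, which follows from the summability of $\sum_j\norm{V^j}_2^2$.
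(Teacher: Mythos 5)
Your proof is correct. The paper does not prove this statement at all --- it is imported verbatim as an external result from Hmidi--Keraani \cite{HmiKe} --- and your argument (the $H^1$ profile decomposition with pairwise divergent cores, the two Pythagorean expansions, the vanishing of the remainder in $L^{2+4/n}$, then the sharp Gagliardo--Nirenberg inequality applied profile-by-profile and the extraction of the largest-mass profile $V^{j_0}$) is precisely the proof given in that reference, with the exponent bookkeeping $\bigl(2+\tfrac4n\bigr)\tfrac n4=\tfrac n2+1$ carried out correctly.
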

% \footnote{I checked 09-17 that the theorem is true verbatim, but $V$ should be replaced with other notation due to preceding notation for a potential}

\begin{remark}
This result is sharp in the sense that the lower bound of $\| V \|_{L^2}$ is attained by taking $v_n = Q$.
\end{remark}

We will also use the following lemma from \cite{OhTo09}.
% We include a brief proof for completeness.

\begin{lemma}[\cite{OhTo09}]\label{lem:f_limit}
Let $T \in (0, \infty)$,
and assume that $f: [0, T) \rightarrow \mathbb{R}^+$ is a continuous function.
If $\displaystyle \lim_{t \rightarrow T} f(t) = \infty$,
then there exists a sequence $\{ t_k \}$ in $\subseteq [0, T)$ such that
	\begin{align*}
	t_k \rightarrow T \qquad
	\textup{and} \qquad
	\frac{\displaystyle \int_0^{t_k} f(\tau) \ d\tau}{f(t_k)} \rightarrow 0 \qquad
	\textup{as} \qquad
	k \rightarrow \infty.
	\end{align*}
\end{lemma}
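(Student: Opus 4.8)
The plan is to reduce the statement to a soft differential inequality for the primitive $F(t) := \int_0^t f(\tau)\,d\tau$. Since $f$ is continuous and (being $\mathbb{R}^+$-valued) positive on $[0,T)$, the function $F$ is $C^1$, strictly increasing, and satisfies $F'(t) = f(t) > 0$; moreover $F(t) > 0$ once $t$ is bounded away from $0$. The ratio appearing in the conclusion is exactly $\int_0^{t} f / f(t) = F(t)/F'(t)$, so the lemma is equivalent to the claim that $\liminf_{t\to T} F(t)/F'(t) = 0$. Once this is shown, the required sequence $\{t_k\}$ with $t_k \to T$ is produced directly from the definition of the $\liminf$ as $t \to T$. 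The key point is that I would not split into the cases $\int_0^T f < \infty$ and $\int_0^T f = \infty$; instead I would run a single contradiction argument that covers both uniformly.

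First I would suppose, for contradiction, that $\liminf_{t\to T} F(t)/f(t) = \delta > 0$. Then there is some $t_0 \in (0,T)$ such that $F(t)/f(t) \ge c := \delta/2$ for all $t \in [t_0,T)$, which rearranges to the differential inequality $F'(t) = f(t) \le \tfrac1c F(t)$, i.e. $(\log F)'(t) \le \tfrac1c$ on $[t_0,T)$. Integrating this elementary inequality from $t_0$ to $t$ yields $F(t) \le F(t_0)\, e^{(t-t_0)/c} \le F(t_0)\, e^{(T-t_0)/c} =: K < \infty$, where finiteness is guaranteed precisely because $T < \infty$. This is the one step where care is needed: one must know $F$ is genuinely $C^1$ and strictly positive on $[t_0,T)$, both of which follow from the positivity and continuity of $f$.

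The contradiction is then immediate: feeding the bound $F(t) \le K$ back into the standing inequality gives $f(t) \le \tfrac1c F(t) \le K/c$ for all $t \in [t_0,T)$, so $f$ is bounded near $T$, contradicting the hypothesis $\lim_{t\to T} f(t) = \infty$. Hence $\liminf_{t\to T} F(t)/f(t) = 0$, and selecting $t_k \to T$ realizing this $\liminf$ completes the argument. I do not expect a genuine obstacle here, since the whole proof is an elementary Gronwall-type estimate; the only thing to get right is the uniform contradiction scheme, which is what lets the single integration step simultaneously rule out both the case where $F$ stays bounded (forcing $f$ bounded) and the case where $F$ blows up (directly contradicted by the exponential bound).
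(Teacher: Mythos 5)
Your proof is correct. Note that the paper itself gives no proof of this lemma --- it is quoted verbatim from \cite{OhTo09} --- so there is no in-paper argument to compare against; judged on its own, your Gronwall-style contradiction is a valid and self-contained derivation. The usual route (in Ohta--Todorova and elsewhere) splits into cases: if $\int_0^T f<\infty$ then $F$ is bounded and $f(t_k)\to\infty$ forces the ratio to $0$ along \emph{any} sequence $t_k\to T$; if $\int_0^T f=\infty$ one observes $\int_{t_0}^T f/F = \lim_{t\to T}\log F(t)-\log F(t_0)=\infty$ over an interval of finite length, so $f/F$ cannot stay bounded and hence $\liminf F/f=0$. Your single contradiction argument is essentially the contrapositive of the second computation and absorbs the first case for free, which is a small economy. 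Two cosmetic points you should tidy: (i) if $\liminf_{t\to T}F(t)/f(t)=+\infty$ then $c=\delta/2$ is meaningless, so choose instead any finite $c$ with $0<c<\delta$ (e.g.\ $c=\min(\delta/2,1)$); (ii) take $t_0>0$ so that $F(t_0)=\int_0^{t_0}f>0$ and $\log F$ is defined on $[t_0,T)$. Neither affects the substance.
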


Now we prove the mass concentration result.   

\begin{theorem}\label{thm:limiting_profile}
Suppose that the solution $u$ to the Cauchy problem of the $L^2$-critical RNLS (\ref{eq:nls_va})
blows up at finite time $T^* < \infty$.
Then for any function $w(t)$ satisfying $w(t) \| \nabla u(t) \|_{L^2} \rightarrow \infty$ as $t \rightarrow T^*$,
there exists a function $x(t) \in \mathbb{R}^n$ such that
(up to a subsequence)
	\begin{align}\label{Ec:w(t):Q}
	\liminf_{t \rightarrow T^*} \| u(t) \|_{L^2(\{ |x - x(t)| < w(t) \})}
	\geq \| Q \|_{L^2}.
	\end{align}
\end{theorem}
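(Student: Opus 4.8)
The plan is to prove the mass concentration result for the RNLS blowup solution by following the standard scheme for $L^2$-critical mass concentration, but using the tools specific to this paper: the conservation of mass, the energy functional $\mathcal{E}_0$, Lemma \ref{lem:dE0}, the refined compactness Theorem \ref{thm:compactness_lemma}, and the time-averaging Lemma \ref{lem:f_limit}. The key point is that this theorem is stated for \emph{all} $\gamma\in\R$ and does not require $u_0$ to be above the ground state, so I cannot invoke the $\log$-$\log$ rate or the $\mathcal{R}_\gamma$-transform directly; the argument must be self-contained and driven purely by the blowup hypothesis $\|\nabla u(t)\|_2\to\infty$.

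Let me look at the structure I'd build. Set $\lambda(t):=\|\nabla u(t)\|_{L^2}^{-1}\to 0$ as $t\to T^*$, and define the rescaled family $v_t(x):=\lambda(t)^{n/2}u(t,\lambda(t)x)$, so that $\|\nabla v_t\|_2=1$ and $\|v_t\|_2=\|u_0\|_2$ is bounded (conservation of mass). The heart of the matter is to produce a lower bound on the rescaled $L^{2+4/n}$ norm that stays bounded away from zero. This is where $\mathcal{E}_0$ and Lemma \ref{lem:dE0} enter: I would show $\mathcal{E}_0(u(t))=o(\|\nabla u(t)\|_2^2)$ along a suitable subsequence. First I would control $|\frac{d}{dt}\mathcal{E}_0(u)|\le 4\gamma^2\|xu\|_2\|\nabla u\|_2$ via Lemma \ref{lem:dE0} and Cauchy--Schwarz, integrate, and then apply the time-averaging Lemma \ref{lem:f_limit} to the function $f(t)=\|\nabla u(t)\|_2^2$ (or a related quantity) to extract a sequence $t_k\to T^*$ along which the time-integrated contribution is negligible compared to $\|\nabla u(t_k)\|_2^2$. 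This yields
\begin{align*}
\mathcal{E}_0(u(t_k)) = o\!\left(\|\nabla u(t_k)\|_2^2\right)\qquad\text{as }k\to\infty,
\end{align*}
hence by the definition of $\mathcal{E}_0$,
\begin{align*}
\frac{n}{n+2}\,\|u(t_k)\|_{L^{2+4/n}}^{2+4/n}
= \|\nabla u(t_k)\|_2^2 - \mathcal{E}_0(u(t_k))
= \|\nabla u(t_k)\|_2^2\,(1+o(1)).
\end{align*}

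Rescaling this identity to $v_k:=v_{t_k}$ gives $\limsup_k\|\nabla v_k\|_2\le 1=:M$ and $\liminf_k\|v_k\|_{L^{2+4/n}}^{2+4/n}\ge \frac{n+2}{n}(1+o(1))$, which furnishes the lower bound $m$ needed for Theorem \ref{thm:compactness_lemma}. Applying that compactness lemma, I obtain $V\in H^1$ and translations $\{x_k\}$ with $v_k(\cdot+x_k)\rightharpoonup V$ weakly and $\|V\|_2\ge (\frac{n}{n+2})^{n/4}\frac{m^{n/2+1}}{M^{n/2}}\|Q\|_2=\|Q\|_2$ after plugging in the computed values of $m$ and $M$. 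Finally I would undo the rescaling: setting $x(t_k):=\lambda(t_k)x_k$, weak lower semicontinuity of the $L^2$ norm on the ball together with the hypothesis $w(t)/\lambda(t)=w(t)\|\nabla u(t)\|_2\to\infty$ (so the fixed-radius ball around $V$ in rescaled coordinates sits inside the shrinking physical ball $\{|x-x(t_k)|<w(t_k)\}$) transfers the bound to
\begin{align*}
\liminf_{t\to T^*}\|u(t)\|_{L^2(\{|x-x(t)|<w(t)\})}\ge \|Q\|_2.
\end{align*}

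\textbf{The main obstacle} I anticipate is the averaging step that forces $\mathcal{E}_0(u(t_k))=o(\|\nabla u(t_k)\|_2^2)$. Unlike the free NLS, here $\mathcal{E}_0$ is not conserved — its derivative is governed by the rotation/potential term in Lemma \ref{lem:dE0} — so I must carefully bound $\int_0^{t}|\frac{d}{ds}\mathcal{E}_0|\,ds$ in terms of $\|\nabla u\|_2$ and then arrange the choice of $f$ in Lemma \ref{lem:f_limit} so the ratio vanishes. The delicate bookkeeping is controlling $\|xu(t)\|_2$, which may itself grow; one expects to absorb it using $\|xu\|_2\|\nabla u\|_2$ and the virial-type estimate \eqref{bound:x-vphi}-style bound, so that the accumulated error is genuinely lower order than $\|\nabla u(t_k)\|_2^2$ along the extracted sequence. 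Everything else (rescaling, change of variables, weak lower semicontinuity on balls) is routine once that energy comparison is in hand.
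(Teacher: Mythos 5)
Your proposal matches the paper's proof in all essential respects: the same use of Lemma \ref{lem:dE0} plus Cauchy--Schwarz to bound $|\frac{d}{dt}\mathcal{E}_0|$ by $\|xu\|_2\|\nabla u\|_2$, the same extraction of $t_k$ via Lemma \ref{lem:f_limit} so that $\mathcal{E}_0(u(t_k))=o(\|\nabla u(t_k)\|_2^2)$, the same application of Theorem \ref{thm:compactness_lemma} to the rescaled sequence (your normalization $\|\nabla v_k\|_2=1$ versus the paper's $\|\nabla v_k\|_2=\|\nabla Q\|_2$ is immaterial, as both give $\|V\|_2\ge\|Q\|_2$ after plugging into the sharp constant), and the same transfer back to physical balls using $w(t)\|\nabla u(t)\|_2\to\infty$ and the attainment of the supremum in $y$. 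The paper controls $\|xu\|_2$ exactly as you anticipate, via \eqref{eq:xU} and \eqref{bound:x-vphi}.
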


\begin{proof}
Note that by Lemma \ref{lem:dE0},
we have
	\begin{align}\label{eq:dE0_estimate}
	\left| \frac{d}{dt} \big( \mathcal{E}_0(u) \big) \right|
	= \left| - 4 \gamma^2 \Im \int_{\mathbb{R}^n} x u \cdot \nabla \overline{u} dx \right|
	\lesssim \| xu \|_{L^2} \| \nabla \overline{u} \|_{L^2}
	\lesssim \| \nabla u \|_{L^2}.
	\end{align}
Here the boundedness of 
$ \| xu \|_{2}$ follows from 
 \eqref{eq:xU} and \eqref{bound:x-vphi}.  
Since $\left\| \nabla u(t) \right\|_{L^2} \rightarrow \infty$ as $t \rightarrow T^*$,
by Lemma \ref{lem:f_limit},
there exists a sequence $t_k \rightarrow T^*$ such that
	\begin{align*}
	\frac{\displaystyle \int_0^{t_k} \| \nabla u(\tau) \|_{L^2} d\tau}
	{\| \nabla u(t_k) \|_{L^2}} \rightarrow 0,
	\end{align*}
so by \eqref{eq:dE0_estimate} we have
	\begin{align}\label{eq:integral_control}
	\frac{\displaystyle \left| \int_0^{t_k} \frac{d}{d\tau} \big( \mathcal{E}_0(u(\tau)) \big) d\tau \right|}
	{\left\| \nabla u(t_k) \right\|_{L^2}^2}
	\lesssim \frac{\displaystyle \int_0^{t_k} \| \nabla u(\tau) \|_{L^2} d\tau}
	{\left\| \nabla u(t_k) \right\|_{L^2}}
	\cdot \frac{1}
	{\left\| \nabla u(t_k) \right\|_{L^2}}
	\rightarrow 0.
	\end{align}
Let
	\begin{align*}
	\rho(t) := \frac{\| \nabla Q \|_{L^2}}{\| \nabla u(t) \|_{L^2}} \qquad
	\text{ and } \qquad
	v(t, x) := \rho^{\frac{n}{2}} u(t, \rho x).
	\end{align*}
Then it is easy to check that
	\begin{align*}
	\| v(t) \|_{L^2}
	= \| u(t) \|_{L^2}
	= \| u_0 \|_{L^2} \qquad
	\text{ and } \qquad
	\| \nabla v(t) \|_{L^2}
	= \rho \| \nabla u(t) \|_{L^2}
	= \| \nabla Q \|_{L^2}.
	\end{align*}
Now set $\rho_k := \rho(t_k)$ and $v_k(x) := v(t_k, x)$.
Then we have checked that $\{ v_k \}$ is bounded in $H^1$,
and inequality \eqref{eq:upper_bound} is satisfied with $M = \| \nabla Q \|_{L^2}$.
To check inequality \eqref{eq:lower_bound},
first we integrate the equality in Lemma \ref{lem:dE0} on $[0, t)$ and get
	\begin{align*}
	\mathcal{E}_0(u)
	= \mathcal{E}_0(u_0)
	+ \int_0^t \frac{d}{d\tau} \big( \mathcal{E}_0(u(\tau)) \big) d\tau.
	\end{align*}
By \eqref{eq:integral_control},
we have
	\begin{align*}
	\mathcal{E}_0(v_k)
	&= \| \nabla v_k \|_{L^2}^2
	- \frac{n}{2 + n} \| v_k \|_{L^{\frac{4}{n} + 2}}^{\frac{4}{n} + 2} \\
	&= \rho_k^2 \| \nabla u(t_k) \|_{L^2}^2
	- \frac{n}{2 + n} \left( \rho_k^2 \| u(t_k) \|_{L^{\frac{4}{n} + 2}}^{\frac{4}{n} + 2} \right) \\
	&= \rho_k^2 \, \mathcal{E}_0( u(t_k) ) \\
	&= \rho_k^2 \, \mathcal{E}_0(u_0)
	+ \rho_k^2 \, \int_0^{t_k} \frac{d}{d\tau} \big( \mathcal{E}_0(u(\tau)) \big) d\tau \\
	&= \frac{\| \nabla Q \|_{L^2}^2 \mathcal{E}_0(u_0)}{\| \nabla u(t_k) \|_{L^2}^2}
	+ \| \nabla Q \|_{L^2}^2  \frac{\displaystyle \int_0^{t_k} \frac{d}{d\tau} \big( \mathcal{E}_0(u(\tau)) \big) d\tau}{\| \nabla u(t_k) \|_{L^2}^2}
	\rightarrow 0 \qquad
	\textup{as } k \rightarrow \infty,
	\end{align*}
or equivalently,
	\begin{align*}
	\| v_k \|_{L^{\frac{4}{n} + 2}}^{\frac{4}{n} + 2}
	\rightarrow \frac{2 + n}{n} \| \nabla v_k \|_{L^2}^2
	= \frac{2 + n}{n} \| \nabla Q \|_{L^2}^2,
	\end{align*}
so inequality \eqref{eq:lower_bound} is also satisfied
with $m = \left( \frac{2 + n}{n} \| \nabla Q \|_{L^2}^2 \right)^\frac{n}{4 + 2n}$.
Thus,
by Theorem \ref{thm:compactness_lemma},
there exists a sequence $\{ x_k \} \subseteq \mathbb{R}^n$ such that
	\begin{align}\label{eq:weak_convergence}
	\rho_k^\frac{n}{2} u(t_k, \rho_k \cdot + x_k) \rightharpoonup V
	\end{align}
weakly in $H^1$,
and
	\begin{align*}
	\| V \|_{L^2}
	\geq \left( \frac{n}{n + 2} \right)^\frac{n}{4} \frac{m^{\frac{n}{2} + 1}}{M^{\frac{n}{2}}} \| Q \|_{L^2}
	= \left( \frac{n}{n + 2} \right)^\frac{n}{4}
	\frac{\left( \frac{2 + n}{n} \| \nabla Q \|_{L^2}^2 \right)^{\frac{n}{4}}}
	{\| \nabla Q \|_{L^2}^{\frac{n}{2}}} \| Q \|_{L^2}
	= \| Q \|_{L^2}.
	\end{align*}
By \eqref{eq:weak_convergence},
for every $R > 0$,
there is
	\begin{align*}
	\liminf_{k \rightarrow \infty} \int_{|x| \leq R} \rho_k^n \left| u(t_k, \rho_k x + x_k) \right|^2 dx
	\geq \int_{|x| \leq R} \left| V \right|^2 dx,
	\end{align*}
or equivalently,
	\begin{align*}
	\liminf_{k \rightarrow \infty} \int_{|x - x_k| \leq \rho_k R} \left| u(t_k, x) \right|^2 dx
	\geq \int_{|x| \leq R} \left| V \right|^2 dx.
	\end{align*}
If $w(t)$ is a function satisfying $w(t) \| \nabla u(t) \|_{L^2} \rightarrow \infty$ as $t \rightarrow T^*$,
i.e.,
$w(t_k)/\rho_k \rightarrow \infty$ as $k \rightarrow \infty$,
then for sufficiently large $k$ there is $w(t_k) \geq \rho_k R$.
Hence,
	\begin{align*}
	\int_{|x| \leq R} \left| V \right|^2 dx
	\leq \liminf_{k \rightarrow \infty} \sup_{y \in \mathbb{R}^n}
	\int_{|x - y| \leq \rho_k R} \left| u(t_k, x) \right|^2 dx
	\leq \liminf_{k \rightarrow \infty} \sup_{y \in \mathbb{R}^n}
	\int_{|x - y| \leq w(t_k)} \left| u(t_k, x) \right|^2 dx.
	\end{align*}
The right-hand side of the above inequality is independent of $R$,
so
	\begin{align*}
	\int_{\mathbb{R}^n} \left| V \right|^2 dx
	\leq \liminf_{k \rightarrow \infty} \sup_{y \in \mathbb{R}^n}
	\int_{|x - y| \leq w(t_k)} \left| u(t_k, x) \right|^2 dx.
	\end{align*}
For each fixed $t \in [0, T^*)$,
the above supremum in $y$ is attained at some $x(t) \in \mathbb{R}^n$ because the function
	\begin{align*}
	y \mapsto \int_{|x - y| \leq w(t)} \left| u(t, x) \right|^2 dx
	\end{align*}
is continuous and vanishes at infinity.
With this $x(t)$,
we have
	\begin{align*}
	\liminf_{k \rightarrow \infty} \int_{|x - x(t_k)| \leq w(t_k)} \left| u(t_k, x) \right|^2 dx
	\geq \int_{\mathbb{R}^n} \left| V \right|^2 dx
	\geq \| Q \|_{L^2}^2.
	\end{align*}
This completes the proof.
\end{proof}

With Theorem \ref{thm:limiting_profile},
we can give an alternative proof of 
 the  global existence  for RNLS (\ref{eq:nls_va}).
  
\begin{corollary}
If $\| u_0 \|_{L^2} < \| Q \|_{L^2}$,
then the solution $u$ of (\ref{eq:nls_va}) exists globally in time.
\end{corollary}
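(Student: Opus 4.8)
The plan is to argue by contraposition, playing the mass concentration of Theorem \ref{thm:limiting_profile} against the conservation of mass (\ref{eM:mass}). Assume, to the contrary, that the maximal forward existence time of $u$ is finite, $T^*<\infty$. By the blowup alternative in Proposition \ref{p:blup-mNLS}(b), finiteness of $T^*$ forces $\norm{\nabla u(t)}_2\to\infty$ as $t\to T^*$, so the solution genuinely blows up in the $\dot H^1$ sense and Theorem \ref{thm:limiting_profile} becomes applicable.

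First I would fix an admissible localization scale. Since $\norm{\nabla u(t)}_2\to\infty$, the constant choice $w(t)\equiv 1$ already satisfies the hypothesis $w(t)\norm{\nabla u(t)}_2\to\infty$ demanded by Theorem \ref{thm:limiting_profile}. Applying that theorem therefore produces a trajectory $x(t)\in\R^n$ and a subsequence $t_k\to T^*$ along which
\[
\liminf_{t\to T^*}\norm{u(t)}_{L^2(\{|x-x(t)|<1\})}\ \geq\ \norm{Q}_2 .
\]
Next I would confront this with the global mass bound: for every $t$ and every measurable set $E\subseteq\R^n$ one has the trivial monotonicity $\norm{u(t)}_{L^2(E)}\le\norm{u(t)}_2$, and by the conservation law (\ref{eM:mass}) the right-hand side equals $\norm{u_0}_2$. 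Combining these two facts yields $\norm{Q}_2\le\norm{u_0}_2$, which contradicts the standing hypothesis $\norm{u_0}_2<\norm{Q}_2$. Hence the assumption $T^*<\infty$ is untenable, so $T^*=\infty$ and $u$ is global.

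I expect no serious analytic obstacle here: once Theorem \ref{thm:limiting_profile} is granted, the argument is essentially mechanical. The only point requiring care is the logical packaging. Theorem \ref{thm:limiting_profile} is phrased under the standing assumption of finite-time blowup, so it must be used in contrapositive form, and one must check that its hypothesis on $w(t)$ is met — which is immediate from the blowup alternative. I would also remark that this reproduces, via the concentration mechanism rather than the $\cR_\ga$-transform, the subcritical-mass global existence already recorded in Proposition \ref{t:u0>Q-blowup}(a), which justifies calling it an \emph{alternative} proof.
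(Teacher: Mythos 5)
Your argument is correct and is essentially the paper's own proof: the paper likewise assumes finite-time blowup, invokes Theorem \ref{thm:limiting_profile} together with mass conservation to obtain $\| u_0 \|_{2} < \| Q \|_{2} \leq \liminf_{t \to T^*} \| u(t) \|_{L^2(|x-x(t)|<w(t))} \leq \| u_0 \|_{2}$, and concludes by contradiction. Your additional care in choosing $w(t)\equiv 1$ and citing the blowup alternative only makes explicit what the paper leaves implicit.
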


\begin{proof}
If $u$ blows up at finite time $T^* < \infty$,
then by Theorem \ref{thm:limiting_profile} and the conservation of mass,
we have
	\begin{align*}
	\| u_0 \|_{L^2}
	< \| Q \|_{L^2}
	\leq 
	\liminf_{t \rightarrow T^*} \| u(t) \|_{L^2 \big( |x - x(t)| < w(t) \big)}
	\leq \| u_0 \|_{L^2},
	\end{align*}
a contradiction.
\end{proof}

%{\color{blue}
\section{Numerical simulations in $\R^{1+2}$} \label{s:numerics} 
 Theorem \ref{thm:log-log-u}  shows that the ``log-log" blow-up dynamics exists for RNLS \eqref{eq:nls_va}
 when $u_0$ is in a region close to the ground state $Q$, i.e.,  
\[
u_0\in \mathcal{S}_{\al^*}= \{\phi\in \Sigma:\mathcal{E}(\phi)<0\}\cap \lbrace \|Q\|_2 {<} \|\phi \|_2 < \|Q\|_2+\alpha^* \rbrace.
\]
In this section, we perform  numerical simulations for the  p.d.e. (\ref{NLS:numerical}) on $\R^2$
in order to provide experimental verification on the blowup speed as well as the limiting behavior
for the solution  $u=u(t,x,y)$ nearing the singularity time.  % {\ga}=\pm 1 
 Let $p=3$,  $V_\ga(x,y)=\sgn(\ga)\ga^2 (x^2+y^2)$ and  $A=0$. 
 we take on the following equation %and n=2 %\edz{Is it possible letting $\td{V}$ be more general potential?}
\begin{align}\label{NLS:numerical} 
iu_t=-\frac{1}{2}\Delta u +V_{\gamma}  u-|u|^{p-1}u+iA\cdot \nabla u\,.
\end{align} 
When ${\gamma}=1$, it is the attractive case;  when ${\gamma}=-1$, it is the repulsive case.
The condition $A=0$ means it is rotation free. 
Our algorithm can cover
the quadratic potential case including the anisotropic  
 $V(x,y)=\pm\ga_1^2x^2\pm\ga_2^2y^2$. %for \ga_1$ and $\ga_2$ will not differ too much.
Due to technical reason, the algorithm does not extend to the rational case $A\ne 0$,
although the same result is anticipated to be valid  as well. 
%\footnote{Is it possible letting $\td{V}$ be more general potential? Yes, we can also check $V=(a_1x^2+a_2y^2)$, but only for $a_1$ and $a_2$ will not differ too much. Otherwise, we cannot find a stable mesh generating function and this numerical algorithm will fail.

The numerics shows that for  generic data above the ground state, that is, $\|u_0\|_2 > \|Q\|_2+ \eps$ 
(e.g., Gaussian type one bump data), even starting with the non-radial data, the blow-up solution will converge to the rescaled ground state $Q_\lam(x):= 
 \lam\inv Q(\lam\inv x)$ 
with radial symmetry. This is true for both  potential cases $\ga=\pm 1$. 
The blow-up rate is $\| \nabla u\|_{L_x^2} \sim (T-t)^{-\frac{1}{2}}$, where $T$ is the blow-up time. However, we cannot observe the ``log-log" correction from our numerical simulation so far. 
%\footnote{ our  simulation shows  beside the ground state type initial 
%$u_0=cQ$ ($c>1$), for the generic data $\|u_0\|_2 > \|Q\|_2 + \epsilon^2 
%(e.g., Gaussian type one bump data), even starting with the non-radial data, the blowup solution will 
%converge to the rescaled ground state $Q_c$ ($Q_c=cQ(\sqrt{c}x)$) with radial symmetry. 
%This is true for both of the attractive and the repulsive harmonic potential 
% blup  \| \nabla u\|_{L_x^2} \sim (T-t)^{-\frac{1}{2}}  T  the blow-up   cannot observe the ``log-log" correction from our direct numerical simulation so far 

The iterative grid redistribution method from \cite{RW1999} for the spatial discretization 
could be applied here. In brief, we consider the mapping $x=x(\xi,\eta)$, and $y=y(\xi,\eta)$, where $\xi$ and $\eta$ are uniform mesh (usually called the computational domain), and $x$ and $y$ are the coordinates in our physical space $\mathbb{R}^2$ (physical domain). The mapping $x=x(\xi,\eta)$ and $y=y(\xi,\eta)$ are determined from the mesh generating PDE with the weighted function 
$$w=\sqrt{1+\| \nabla u \|_2^2/\|u\|_{\infty}^2+ \| \Delta u \|_2 / \|u\|_{\infty}^2}$$ (we refer interested readers to (ref) for more details). As a result, the solution $u(\xi,\eta,t)$ can always keep a good shape ($\|\nabla_{\xi,\eta} u(\xi,\eta)\|_2 \leq C\|u(\xi,\eta)\|_{\infty}$ for some given constant $C$, e.g., $C\leq  5$) at any time $t=t_m$. Thus, we can simulate the numerical results when the solution is very close to the singularity, e.g., $\|u\|_{\infty} \sim 10^{12}$.

We use the 4th order central difference method on the computational variables $(\xi,\eta)$
for the spatial discretization, and the standard 4th order Runge-Kutta (RK4) for the time integration. We take the adaptive time step from $t_m$ to $t_{m+1}$ to be $\Delta t_m=\frac{1}{\| U_m \|_{\infty}^{p-1}}$, where $U_m\approx u(t_m,x(\xi,\eta),y(\xi,\eta) )$
 is our numerical solution of \eqref{NLS:numerical} at $t=t_m$. We denote $L_m=\frac{1}{\|\nabla U_m\|_2}$ to track the blow-up rate. We assume the time we end our simulation $t_{\mbox{end}}$ to be the blow-up time $T$. Then, note that $T=\sum_{j=0}^{M-1} \Delta t_j$, and consequently, the quantity $T-t$ at each time step $t_j$ can be obtained by $T-t_j=\sum_{k=j}^{M-1} \Delta t_k$.  

We take the initial data $u_0=5\exp(-(2x)^2-y^2)$ for both the repulsive and attractive cases. We first show the numerical results for the repulsive case in 
Figure \ref{NLS repulsive blow up}. %shows the solution for the repulsive case.
The top left subplot shows our numerical solution at $t \approx 0.10343$. We take this time as the approximation of the blow-up time $T$. The top right plot shows the zoom of the solution. The bottom left plot shows the solution $|u(\xi,\eta)|$ on the computational domain $(\xi,\eta)$. It shows the reasonable profile ($\|\nabla_{\xi,\eta} u(\xi,\eta)\|_2 \leq C\|u(\xi,\eta)\|_{\infty}$), which indicates the effectiveness of our scheme. The bottom right subplot shows how the physical coordinate $(x,y)$ distributes at $t\approx 0.10343$. One can see it is concentrating near the origin where the blow-up phenomenon occurs, which is as expected.

\begin{figure}
%\begin{center}
\includegraphics[width=0.48\textwidth]{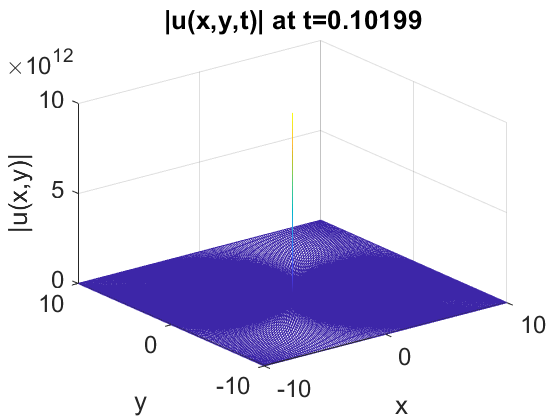}
\includegraphics[width=0.48\textwidth]{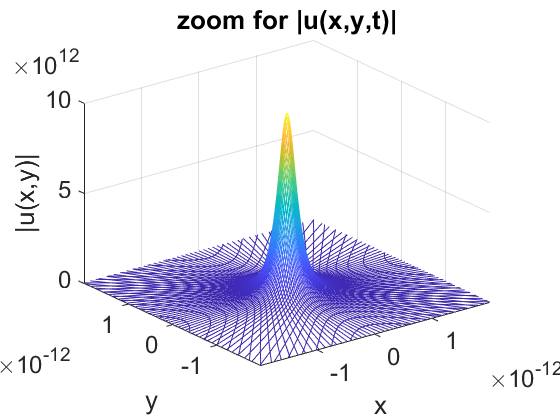}
\includegraphics[width=0.48\textwidth]{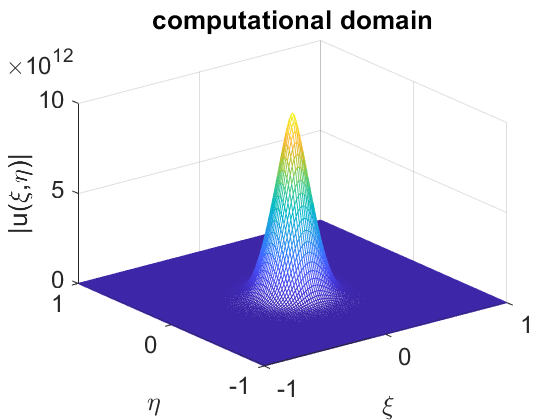}
\includegraphics[width=0.48\textwidth]{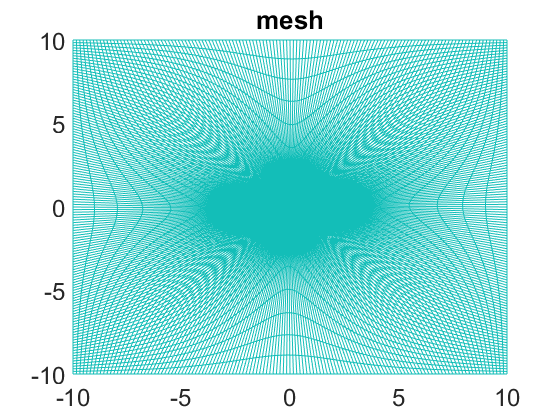}
\caption{\label{NLS repulsive blow up} Solution for $u_0=5\exp(-(2x)^2-y^2)$ at $t\approx 0.10343$ for the repulsive harmonic potential case $\td{\ga}=-1$. Top left: $|u(x,y)|$; top right: zoom for $|u(x,y)|$; bottom left: solution $|u(\xi,\eta)|$ on the computational domain. bottom right: the mesh profile.}
%\end{center}
\end{figure} 
% \footnote{$u_0=5\exp(-4x^2-y^2)$ is correct} 

Figure \ref{NLS repulsive profile} shows more details about the blow-up solution structures. The left subplot shows that the solution eventually converges to the rescaled ground state profile {$Q_\lam(r)$} 
with radial symmetry. The right plot tracks the quantity $T-t$ v.s $L(t)$ in log scale. One can see that $L(t) \sim \sqrt{T-t}$, indicating the blow-up rate is $\frac{1}{2}$. Moreover, we can see that the slop is $\approx 0.50201$, which is similar to the case in 
\cite{YangRouZh18, MRRY2021, MRY2021} %references for the NLS with radial symmetry and 1d case with stochastic noise) 
obtained from the dynamic rescaling method and the adaptive mesh refinement method. This indicates the possible existence of the ``log-log" correction term in our rotational NLS case as well. Indeed, consider the rescaling $u_{\lambda}=\lambda u(\lam^2t, \lambda x, \lambda y)$, putting it into \eqref{NLS:numerical} yields
\begin{align*}
i \pa_tu = -\frac{1}{2}\Delta u + \frac{1}{\lambda^2} \td{\gamma} \td{V}u +|u|^{p-1}u - \frac{i}{\lambda} A \cdot \nabla u\,. 
\end{align*} %\edz{check the functions in this eqn. Corrected}
Heuristically, when $\lambda \rightarrow \infty$, the harmonic term and the rotation term tend to $0$, and thus, play no roles in the blow-up solution structure.

\begin{figure}
%\begin{center}
\includegraphics[width=0.48\textwidth]{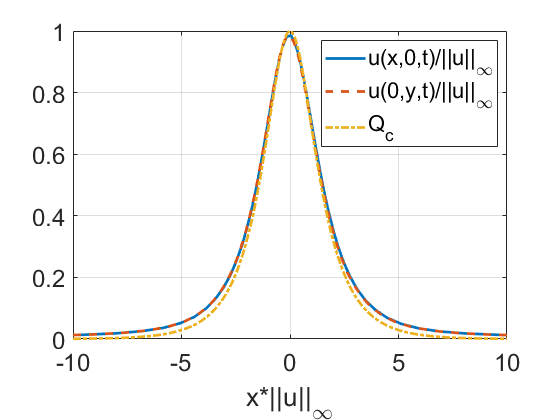}
\includegraphics[width=0.48\textwidth]{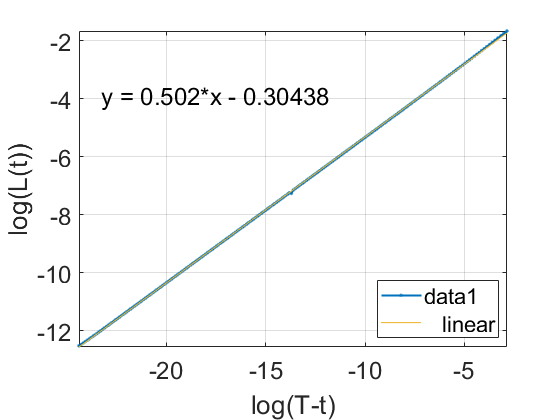}
\caption{\label{NLS repulsive profile} Solution profile for $u_0=5\exp(-(2x)^2-y^2)$ at $t\approx 0.10343$ for the repulsive harmonic potential case. Left: $|u(t,x,0)|$ and $|u(t,0,y)|$ and rescaled $Q_c$. Right: $L(t)=\|\nabla u \|_2$ on the log scale.}
%\end{center}
\end{figure}

%{\color{red} We track the quantity $a=LL_t$. Let $\frac{dtau}{dt}=\frac{1}{L^2}$ we need $a(\tau)$ decay as $1/\ln(\tau)$ to show the numerical evidence of the ``log-log" correction. However, I cannot obtain this in Figure \ref{NLS repulsive a} so far and I need to investigate more on the numerics. 
%
%\begin{figure}
%%\begin{center}
%\includegraphics[width=0.48\textwidth]{RNLS_re_aa.png}
%\includegraphics[width=0.48\textwidth]{RNLS_re_aa.png}
%\caption{\label{NLS repulsive a} Left: $a(\tau)$. Right (to be corrected): $a(\tau)$ v.s. $1/\ln(\tau)$.}
%%\end{center}
%\end{figure}

The attractive case is absolutely similar to the repulsive case, see Figure \ref{NLS attractive blow up} for the blow-up profiles, and Figure \ref{NLS attractive profile} for more details. We also find that the blow-up time $T$ for the attractive case is shorter than the repulsive case (see $T \approx 0.10199$ for the attractive case v.s $T \approx 0.10343$ for the repulsive case). Indeed, this tells us that the attractive harmonic potential speed up the concentration of the solutions, while the repulsive potential slows it down. On the other hand, once the solution starts to blow up, the dynamics will be almost the same.
This also agrees with the numerical results by putting the stochastic noise during the blow-up process for the 1D $L^2$-critical cases in \cite{MRRY2021, MRY2021}. 

\begin{figure}
%\begin{center}
\includegraphics[width=0.48\textwidth]{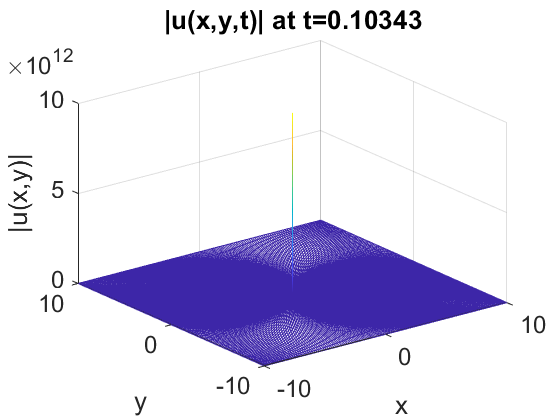}
\includegraphics[width=0.48\textwidth]{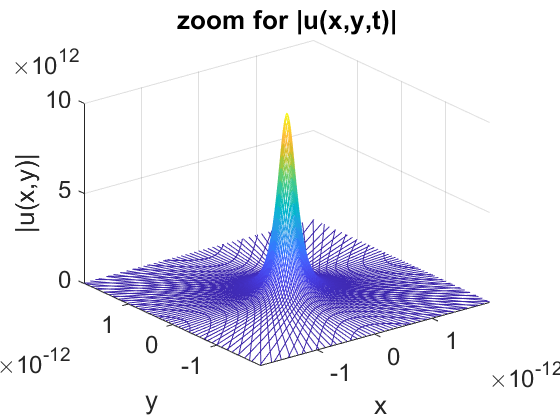}
\includegraphics[width=0.48\textwidth]{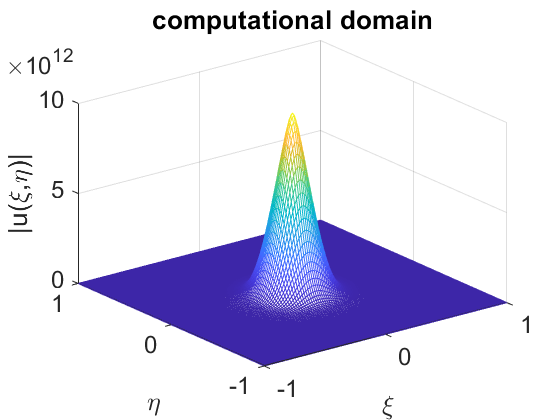}
\includegraphics[width=0.48\textwidth]{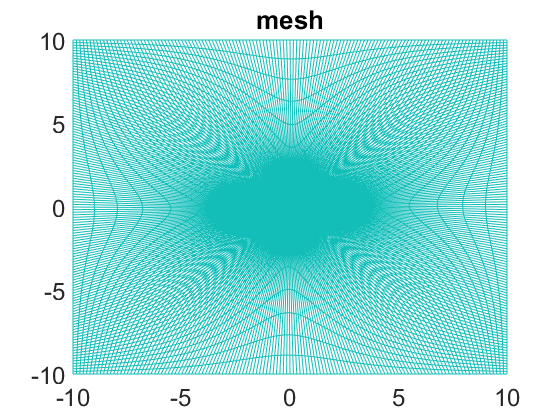}
\caption{\label{NLS attractive blow up} Solution for $u_0=5\exp(-(2x)^2-y^2)$ at $t\approx 0.10199$ for the attractive harmonic potential case $\td{\ga}=1$. Top left: $|u(x,y)|$; top right: zoom for $|u(x,y)|$; bottom left: solution $|u(\xi,\eta)|$ on the computational domain. bottom right: the mesh profile.}
%\end{center}
\end{figure}

\begin{figure}
\includegraphics[width=0.48\textwidth]{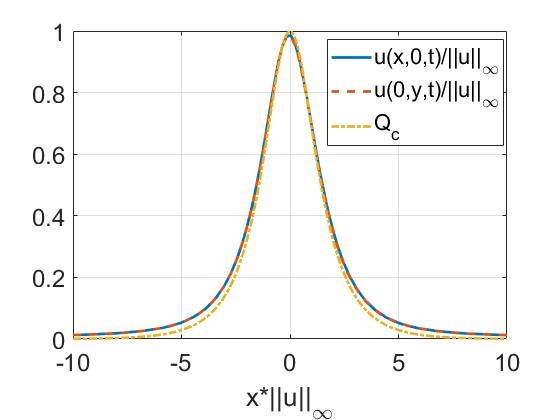}
\includegraphics[width=0.48\textwidth]{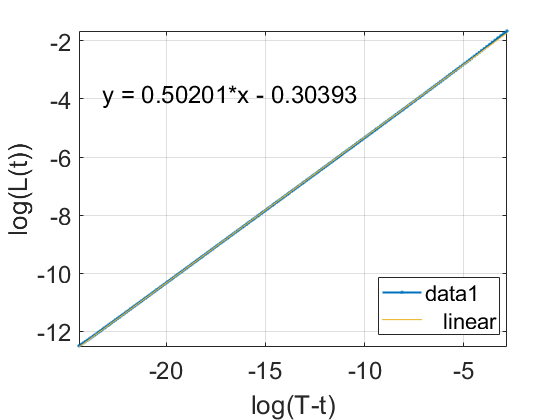}
\caption{\label{NLS attractive profile} Solution profile for $u_0=5\exp(-(2x)^2-y^2)$ at $t\approx 0.10199$ for the attractive harmonic potential case. Left: $|u(t,x,0)|$ and $|u(t,0,y)|$ and rescaled $Q_c$. Right: $L(t)=\|\nabla u \|_2$ on the log scale.}
\end{figure}
%\begin{figure}
%%\begin{center}
%\includegraphics[width=0.48\textwidth]{RNLS_at_aa.png}
%\includegraphics[width=0.48\textwidth]{RNLS_at_aa.png}
%\caption{\label{NLS attractive a} Left: $a(\tau)$. Right (to be corrected): $a(\tau)$ v.s. $1/\ln(\tau)$.}
%%\end{center}
%\end{figure}
\clearpage
%\pagebreak

\end{document}